\newcommand{\inner}[1]{\left\langle #1 \right\rangle}
\newcommand{\norm}[1]{\left\Vert #1\right\Vert}
\newcommand{\bb}[1]{\mathbb{#1}}
\newcommand{\conv}[0]{\mathrm{conv}\,}
\newcommand{\X}{{ \ca{X} }}
\newcommand{\ca}[1]{\mathcal{#1}}
\newcommand{\M}[0]{\mathcal{M}}
\newcommand{\A}{\ca{A}}
\newcommand{\mk}{{m_{k} }}
\newcommand{\vk}{{v_{k} }}
\newcommand{\mkp}{{m_{k+1} }}
\newcommand{\vkp}{{v_{k+1} }}
\newcommand{\xk}{{x_{k} }}
\newcommand{\xkp}{{x_{k+1} }}
\newcommand{\zk}{{z_{k} }}
\newcommand{\D}{\ca{D}}
\newcommand{\W}{\ca{W}}
\newcommand{\caH}{\ca{H}}
\newcommand{\Rn}{\mathbb{R}^n}
\newtheorem{theo}{Theorem}[section]
\newtheorem{lem}[theo]{Lemma}
\newtheorem{prop}[theo]{Proposition}
\newtheorem{defin}[theo]{Definition}
\newtheorem{rmk}[theo]{Remark}
\newtheorem{assumpt}[theo]{Assumption}
\newtheorem{myclaim}{Claim}
\numberwithin{equation}{section}
\begin{document}

\title{Stochastic Subgradient Methods with Guaranteed Global Stability in Nonsmooth Nonconvex Optimization\thanks{The research of Xiaoyin Hu is supported by the National Natural Science Foundation of China (Grant No. 12301408), Zhejiang Provincial Natural Science Foundation of China under Grant (No. LQ23A010002), and the advanced computing resources provided by the Supercomputing Center of HZCU.}
}


\author{Nachuan Xiao        \and
        Xiaoyin Hu  		\and
        Kim-Chuan Toh		
}


\institute{Nachuan Xiao \at
              Institute of Operational Research and Analytics, National University of Singapore, Singapore. \\
              \email{xnc@lsec.cc.ac.cn}           
           \and
           Xiaoyin Hu \at
              School of Computer and Computing Science, Hangzhou City University, Hangzhou, 310015, China.\\
              \email{hxy@amss.ac.cn} 
          \and
          Kim-Chuan Toh \at
          Department of Mathematics, and Institute of Operations Research and Analytics, National University of Singapore, Singapore 119076.\\
          \email{mattohkc@nus.edu.sg} 
}

\date{Received: date / Accepted: date}

\maketitle

\begin{abstract}
In this paper, we focus on providing convergence guarantees for stochastic subgradient methods in minimizing nonsmooth nonconvex functions. We first investigate the global stability of a general framework for stochastic subgradient methods, where the corresponding differential inclusion admits a coercive Lyapunov function. We prove that, for any sequence of sufficiently small stepsizes and approximation parameters, coupled with sufficiently controlled noises, the iterates are uniformly bounded and asymptotically stabilize around the stable set of its corresponding differential inclusion. Moreover, we develop an improved analysis to apply our proposed framework to establish the global stability of a wide range of stochastic subgradient methods, where the corresponding Lyapunov functions are possibly non-coercive. These theoretical results illustrate the promising potential of our proposed framework for establishing the global stability of various stochastic subgradient methods.

\keywords{Nonsmooth optimization \and Stochastic subgradient methods \and Nonconvex optimization \and  Global stability \and Differential inclusion}
\subclass{65K05 \and 90C30}
\end{abstract}

\section{Introduction}
In this paper, we consider the following unconstrained optimization problem, 
\begin{equation}
	\label{Prob_Ori}
	\tag{UOP}
	\begin{aligned}
		\min_{x \in \Rn}\quad f(x) := \frac{1}{N}\sum_{i = 1}^N f_i(x).
	\end{aligned}
\end{equation}
Here for each $i \in [N]:=\{1,\dots, N\}$, the function $f_i: \Rn \to \bb{R}$ is assumed to be locally Lipschitz continuous, possibly nonconvex and nonsmooth over $\Rn$. 

The optimization problem \eqref{Prob_Ori} encompasses a great number of tasks related to training neural networks, especially when these neural networks employ the rectified linear unit (ReLU) and leaky ReLU \cite{maas2013rectifier} as activation functions. Over the past several decades, a great number of optimization approaches have been developed for solving \eqref{Prob_Ori}. Among these methods, the stochastic gradient descent (SGD) method \cite{robbins1951stochastic} is one of the most popular optimization methods.

Based on the SGD method, various stochastic (sub)gradient methods are proposed by employing momentum terms and performing regularization to the update directions.  For instance, the heavy-ball SGD method \cite{polyak1964some} introduces heavy-ball momentum terms to enhance performance. Moreover, the normalized SGD \cite{you2017large,you2019large,cutkosky2020momentum} is proposed by regularizing the update directions in SGD to the unit sphere, while demonstrating remarkable efficiency in training large language models \cite{you2017large,you2019large}.  In addition, the clipped SGD (ClipSGD) method \cite{zhang2020adaptive,zhang2020improved} is developed by employing the clipping operator to avoid extreme values in the update directions, hence accelerating the training under heavy-tailed noises. Recently, the authors in \cite{castera2021inertial} propose a new variant of the SGD method named as second-order inertial optimization method (INNA), which is derived from the ordinary differential equations proposed in \cite{alvarez1998dynamical,alvarez2002second}. Furthermore, some existing works \cite{bernstein2018signsgd,chen2023symbolic} choose to update along the sign of stochastic gradients. This approach reduces memory consumption in distributed settings \cite{bernstein2018signsgd}, and leads to a simple and effective variant of the SGD method named evolved sign momentum (Lion) \cite{chen2023symbolic}. In addition, the widely used ADAM method \citep{kingma2014adam} employs coordinate-wise regularization. Motivated by the ADAM method, numerous efficient Adam-family methods are developed, such as AdaBelief \citep{zhuang2020adabelief}, AMSGrad \citep{reddi2019convergence}, NAdam \citep{dozat2016incorporating}, etc.

Despite the recent progress in developing highly efficient stochastic subgradient methods, the convergence guarantees for these methods in neural network training tasks remain limited. 
As illustrated in \cite{bolte2021conservative}, nonsmooth activation functions, including ReLU and leaky ReLU, are commonly employed in neural network architectures.  Consequently, the loss functions of these neural networks are usually nonsmooth and lack Clarke regularity (e.g., differentiability, weak convexity, etc.). 
However, most existing works on convergence analysis only focus on cases where $f$ is differentiable or weakly convex \citep{polyak1964some,you2017large,de2018convergence,zaheer2018adaptive,barakat2021convergence,guo2021novel,shi2021rmsprop,zhang2022adam,wang2022provable}, thus precluding various important applications in training nonsmooth neural networks.

To handle the nonsmoothness of the neural networks, some recent works \cite{davis2020stochastic,ruszczynski2020convergence,bianchi2021closed,bolte2021conservative,castera2021inertial,bolte2022long,le2023nonsmooth,xiao2023adam} employ the ordinary differential equation (ODE) approach \cite{benaim2005stochastic,borkar2009stochastic,duchi2018stochastic,davis2020stochastic} to establish the convergence properties of subgradient methods in minimizing \textit{path-differentiable} functions. As demonstrated in \cite{davis2020stochastic,bolte2021conservative,castera2021inertial}, the class of path-differentiable functions is general enough to include a wide range of real-world applications of these subgradient methods, including those neural networks built from nonsmooth activation functions \cite{bolte2021conservative}. 
Among these subgradient methods, \cite{ruszczynski2020convergence} analyzes the heavy-ball proximal SGD method for minimizing Norkin subdifferentiable functions \cite{norkin1980generalized}. That class of functions belongs to the class of path-differentiable functions \cite[Definition 3]{bolte2021conservative}, according to \cite[Theorem 1]{ruszczynski2020convergence} and \cite[Corollary 2]{bolte2021conservative}. More recently, \cite{le2023nonsmooth} investigates the convergence to essential accumulation points \cite{bolte2022long} for heavy-ball SGD methods in the minimization of path-differentiable functions, based on the closed-measure approach proposed by \cite{bianchi2021closed}. Furthermore, 
\cite{hu2022constraint,hu2023improved} apply these methods to solve manifold optimization problems with the constraint dissolving approach \cite{xiao2023dissolving}. In addition, \cite{ruszczynski2021stochastic,gurbuzbalaban2022stochastic} design subgradient methods for solving multi-level composition optimization problems.  However, these aforementioned works are limited to the SGD and heavy-ball SGD methods, making them inapplicable to a wide range of efficient stochastic subgradient methods, such as SignSGD \cite{bernstein2018signsgd}, normalized SGD \cite{you2017large,you2019large}, ADAM \cite{kingma2014adam}, etc. 

When establishing the convergence properties of a given subgradient method, it is important to guarantee its \textit{global stability} (i.e., uniform boundedness of its iterates), which is required in the existing ODE approaches \cite{benaim2005stochastic,borkar2009stochastic,duchi2018stochastic,davis2020stochastic}. Despite this, in a number of existing works \cite{davis2020stochastic,ruszczynski2020convergence,bolte2021conservative,castera2021inertial,hu2022constraint,le2023nonsmooth}, the global stability is presented as a prior assumption without providing explicit guarantees. To establish the global stability for subgradient methods, \cite{bianchi2019constant,bianchi2022convergence} analyze the SGD method with constant stepsizes by linking it to its differential inclusion and considering its iterates as a Markov chain, thus proving convergence to stationary points with high probability. Moreover, some recent works \cite{josz2023global,bolte2024inexact} establish the global stability for the SGD method and the heavy-ball SGD method with random reshuffling, by showing that these methods approximate the noiseless SGD method. However, the results in \cite[Theorem 1]{josz2023lyapunov} and \cite{bolte2024inexact} rely on the noiseless evaluation of the subgradients of $f$. In addition, the results in \cite[Proposition 1-2]{josz2023lyapunov} require the objective function $f$ to be Clarke regular or differentiable, which fails for the loss functions of nonsmooth neural networks. Furthermore, given that these pioneering works \cite{bianchi2019constant,bianchi2022convergence,josz2023global,josz2023lyapunov,bolte2024inexact} are primarily designed for analyzing SGD and heavy-ball SGD methods, their results are not directly applicable to the convergence analysis of general stochastic subgradient methods. How to establish the global stability for general subgradient methods remains unexplored. Given the increasing number of stochastic subgradient methods and the limited understanding of their global stability in nonsmooth nonconvex optimization, we are motivated to ask the following question: 
\begin{quotation}
	\noindent{\it
		Can we establish the convergence properties for stochastic subgradient methods with guaranteed global stability, especially in training nonsmooth neural networks? 
	}
\end{quotation}

In training neural networks, \textit{random reshuffling} (RR) is a popular choice of sampling techniques in the computation of stochastic subgradients, where the sequence of indexes is sampled from $[N]$ without replacement across the iterations within an epoch. Here an {\it epoch} refers to a single pass through the training data, and different epochs may employ different orders \cite{de2020random,pauwels2021incremental}.  Conversely, the with-replacement sampling (WRS) method samples the indexes and returns them to the pool before proceeding with another sampling round.

When training nonsmooth neural networks with these aforementioned sampling techniques, in each iteration, the stochastic subgradient is computed by differentiating the loss function $f_i$ for a sampled index $i$ from $[N]$. Automatic differentiation (AD) algorithms serve as essential tools for computing the differential of the loss function in neural networks and have become the default choice in numerous deep learning packages, such as TensorFlow, PyTorch, and JAX. When employed for differentiating a neural network, AD algorithms yield the differential of the loss function by recursively composing the Jacobians of each block of the neural network based on the chain rule. However, when applied to nonsmooth neural networks, the outputs of AD algorithms are usually not in the Clarke subdifferential of the network \cite{bolte2020mathematical}, since the chain rule fails for general nonsmooth functions \cite{clarke1990optimization}. To address this issue, \cite{bolte2021conservative} introduced the concept of \textit{conservative field}, which is a generalization of the Clarke subdifferential for \textit{path-differentiable} functions  \cite[Definition 3]{bolte2021conservative}. Moreover, as demonstrated in \cite{bolte2021conservative}, the outputs of AD algorithms are always contained within the conservative field for a wide range of loss functions.  Therefore, the concept of the conservative field provides a way to interpret how AD algorithms work in differentiating nonsmooth neural networks. 

Furthermore, \cite{bolte2021conservative} shows that for any path-differentiable function, its Clarke subdifferential is a subset of its conservative field. However, for nonsmooth neural networks, the conservative fields associated with AD algorithms may introduce infinitely many spurious stationary points \cite{bolte2021nonsmooth,bianchi2022convergence}. Consequently, when developing subgradient methods that rely on the notion of conservative field, existing frameworks  \cite{benaim2005stochastic,benaim2006dynamics,davis2020stochastic,bolte2021conservative}  only guarantee the convergence to stationary points in the sense of the conservative field. To address the issue of convergence to possibly non-Clarke critical points, \cite{bolte2021nonsmooth,bianchi2022convergence} prove that, under certain conditions and with randomly chosen initial points and stepsizes, SGD converges to Clarke critical points almost surely. However, their analysis is limited to SGD without any momentum term, and how to extend their results to a wider range of SGD-type methods remains an open question.

\subsection{A general framework for stochastic subgradient methods}
In this paper, we consider a general framework for stochastic subgradient method 
\begin{equation}
	\label{Eq_stable_iterate}
        \tag{SGM}
	\xkp \in \xk - \eta_k \left( \caH^{\delta_k}(\xk)  + \xi_{k+1}\right).
\end{equation}
Here $\caH: \Rn \rightrightarrows \Rn$ is a set-valued mapping that is graph-closed, convex-valued, and locally bounded. Moreover, the sequence $\{\eta_k\}$ denotes the stepsizes, and $\{\delta_k\}$ refers to the approximation parameters that characterize the inexact evaluations of $\caH$ (see Definition \ref{Defin_approximation_H} for detailed definition). Additionally, the sequence $\{\xi_k\}$ characterizes the evaluation noises of \eqref{Eq_stable_iterate}.

The flexibility in choosing the set-valued mapping $\caH$ in \eqref{Eq_stable_iterate} allows this framework to encompass a wide range of stochastic subgradient methods. Throughout this paper, we focus on two important schemes developed from \eqref{Eq_stable_iterate} by choosing different $\caH$. We first introduce the following scheme for developing SGD-type methods, 
\begin{equation}
	\label{Eq_Framework}
	\tag{GSGD}
	\left\{
	\begin{aligned}
		g_k \in{}& \D_{f_{i_k}}(\xk), \\
		\mkp ={}& \mk + \tau \eta_k( g_k - \mk),\\
		\xkp \in{}& \xk - \eta_k ( \partial \phi(\mkp ) +\rho g_k).
	\end{aligned}
	\right.
\end{equation}
Here $\D_{f_{i_k}}$ is a conservative field for $f_{i_k}$ that describes how $f_{i_k}$ is differentiated. 
Moreover, the sequence of indexes $\{i_k\}$ is randomly chosen from $[N]$. In addition, $\{\eta_k\}$ is a prefixed positive sequence of stepsizes,  $\tau > 0$ is the momentum parameter, and $\rho \geq 0$ is a parameter associated with the Nesterov momentum \cite{nesterov1983method}. Furthermore, $\phi: \Rn \to \bb{R}$ is a possibly nonsmooth auxiliary function that characterizes how we regularize the update directions of $\{\xk\}$. Compared with existing schemes studied in previous works \cite{maddison2018hamiltonian,o2019hamiltonian,francca2020conformal,muehlebach2021optimization,de2022born,wang2024hamiltonian} that necessitate the differentiability of $\phi$, \eqref{Eq_Framework} offers enhanced generality and encompasses a wider range of SGD-type methods. As shown later in Appendix \ref{Section_developing_GSGD}, with specific choices of the auxiliary function $\phi$, \eqref{Eq_Framework} yields variants of several well-known SGD-type methods, including heavy-ball SGD \cite{polyak1964some}, SignSGD \cite{bernstein2018signsgd}, and normalized SGD \cite{you2017large,you2019large}.

Additionally, based on the frameworks in \cite{barakat2021convergence,ding2023adam,xiao2023adam}, we introduce the following scheme for ADAM-family methods,
\begin{equation}
	\label{Eq_Framework_ADAM}
	\tag{ADM}
	\left\{
	\begin{aligned}
		g_k \in{}& \D_{f_{i_k}}(\xk), \\
		\mkp ={}& \mk + \tau_1 \eta_k( g_k - \mk),\\
            \vkp \in{}& \vk + \tau_2 \eta_k( \ca{V}(\xk, \mkp) - \vk),\\
		\xkp \in{}& \xk - \eta_k  (\ca{P}_{+}(\vkp) + \varepsilon_0)^{-\frac{1}{2}} \odot (\mkp + \rho g_k).
	\end{aligned}
	\right.
\end{equation}
Here, $\odot$ and $(\cdot)^{\gamma}$ refer to taking element-wise multiplication and power,  respectively.  In addition, $\ca{V}: \Rn \times \Rn \rightrightarrows \Rn_+$ is a locally bounded and graph-closed set-valued mapping that determines how the estimator $\vk$ is updated, where $\Rn_+$ refers to the set of $n$-dimensional nonnegative vectors. Moreover, $\ca{P}_{+}$ refers to the orthogonal projection to $\Rn_+$, and $\tau_1, \tau_2 > 0$ are the momentum parameters that control the stepsizes in updating $\{\mk\}$ and $\{\vk\}$, respectively. As demonstrated in Appendix \ref{Section_developing_ADAM}, with different choices of the set-valued mapping $\ca{V}$,  \eqref{Eq_Framework_ADAM} corresponds to various variants of ADAM-family methods, such as the ADAM \cite{kingma2014adam}, AdaBelief \cite{zhuang2020adabelief}, and NADAM \cite{dozat2016incorporating}.

Although several existing works \cite{ding2023adam,xiao2023adam} are developed to analyze the convergence properties of \eqref{Eq_Framework_ADAM} under general nonconvex nonsmooth cases, their analysis presumes the uniform boundedness of the iterates $\{(\xk,\mk, \vk)\}$. How to establish the global convergence properties for \eqref{Eq_Framework_ADAM} with guaranteed global stability remains unexplored.

\subsection{Contributions}

The main contributions of our paper are summarized as follows.

\begin{itemize}
    \item {\bf Guaranteed global stability of the general framework \eqref{Eq_stable_iterate}} \\
    We establish convergence guarantees for \eqref{Eq_stable_iterate} by proving that, with a coercive Lyapunov function, the sequence of iterates generated by \eqref{Eq_stable_iterate} is uniformly bounded under mild conditions. Moreover, for any $\varepsilon > 0$, we can choose sufficiently small (possibly non-diminishing) stepsizes $\{\eta_k\}$ and approximation parameters $\{\delta_k\}$, coupled with sufficiently controlled evaluation noises $\{\xi_k\}$, such that the generated iterates $\{\xk\}$ will eventually stabilize within an $\varepsilon$-neighborhood of the stable set of its Lyapunov function.

    Additionally, we show that when the evaluation noises $\{\xi_k\}$ are introduced by random reshuffling, the iterates $\{\xk\}$ within our proposed framework \eqref{Eq_stable_iterate} stabilize around the stable set almost surely with non-diminishing but sufficiently small stepsizes. Conversely, when the evaluation noises are introduced by with-replacement sampling and $\{\eta_k\}$ diminishing at the rate of $o(1/\log(k))$, we prove the high-probability convergence for the iterates $\{\xk\}$ generated by \eqref{Eq_stable_iterate}.  
    
    In comparison to existing results on the stability of subgradient methods \cite{bianchi2022convergence,josz2023global,josz2023lyapunov}, our results provide enhanced flexibility in the choice of update schemes, stepsizes, and sampling techniques. Consequently, these results can be applied to explain the convergence of a wide range of stochastic subgradient methods for minimizing nonconvex nonsmooth functions. A brief comparison between our results and existing frameworks on the global stability of stochastic subgradient methods is presented in Table \ref{Table_stability_framework}. \\

    \begin{table}[t]
    \scriptsize
    \centering
    \begin{tabular}{l|llllll}
    \hline
             & Choice of $\caH$ & Lyap. & Stepsizes & Noises & Conditions & Stability \\ \hline
    \cite{bianchi2019constant} & General $\caH$  & General $\Psi$ &  Constant &  General   &   \cite[(RM), (PH), (FL)]{bianchi2019constant}    &  H.p.   \\
    \cite{josz2023global} & $\partial f$ & $f$ &  Constant   &    Reshuffling    &     Coercive $f$     &     A.s.   \\
    \cite{bolte2024inexact} & $\partial f$ & $f$ &  Constant   &    Reshuffling    &     Coercive $f$     &     A.s.   \\
    Theorem \ref{Theo_abstract_stability_convergence_sequence} & General $\caH$  & General $\Psi$ &  Non-diminishing &  General   &   Coercive $\Psi$ \& controlled noises    &  A.s.   \\
    \hline
    \end{tabular}
    \caption{A brief comparison of existing results on various frameworks for global stability. Here ``Lyap.'', ``A.s.'' and ``H.p'' are the abbreviations of ``Lyapunov function'', ``almost sure convergence'' and ``high-probability convergence'', respectively. }
    \label{Table_stability_framework}
    \end{table}

    \item {\bf Stochastic subgradient methods with guaranteed global stability} \\
    Based on our developed results for global stability, we then establish the convergence properties for a wide range of stochastic subgradient methods. We first focus on the convergence properties of  \eqref{Eq_Framework}. Under mild conditions, we show that \eqref{Eq_Framework} fits into the framework \eqref{Eq_stable_iterate} and admits a coercive Lyapunov function. Then the global stability of \eqref{Eq_Framework} directly follows from our established results on the global stability of \eqref{Eq_stable_iterate}. In addition, we show that with random initialization,  almost surely, \eqref{Eq_Framework} can avoid the spurious stationary points introduced by the conservative fields $\{\D_{f_i}\}$. Therefore, the sequence $\{\xk\}$ asymptotically converges towards the Clarke critical points of $f$ almost surely, under mild conditions with random initialization. 
    
    Moreover, we investigate the convergence properties of \eqref{Eq_Framework_ADAM}, where the corresponding differential inclusion admits a non-coercive Lyapunov function \cite{barakat2021convergence,xiao2023adam,ding2023adam} and thus preventing the direct adaptation of existing results. Toward this challenge, we introduce an auxiliary update scheme to \eqref{Eq_Framework_ADAM} parameterized by $K > 0$, which corresponds to a coercive Lyapunov function for all positive $K$. Moreover, we prove that the auxiliary scheme coincides with \eqref{Eq_Framework_ADAM} for a specific choice of $K>0$. Therefore, the global stability and avoidance of spurious stationary points of \eqref{Eq_Framework_ADAM} directly follow from those properties of the proposed auxiliary scheme. A brief comparison between our results and the existing results on the stability of stochastic subgradient methods is presented in Table \ref{Table_stability_methods}. 

    \begin{table}[t]
    \scriptsize
    \centering
    \begin{tabular}{l|lllll}
    \hline
             & Method &  Conditions on $f$ & Stepsizes & Noises & Convergence to $\{x \in \Rn: 0\in \D_f(x)\}$ \\ \hline
    \cite{bianchi2022convergence} & SGD  & Differentiable outside a ball & Constant & WRS  & H.p. to $\varepsilon$-neighborhood  \\
    \cite[Alg. 1]{josz2023global} & GD   & Coercive & Constant & None  & A.s. to $\varepsilon$-neighborhood  \\
    \cite[Alg. 2]{josz2023global} & SGD  & Coercive \& Clarke regular & Constant & RR  & A.s. to $\varepsilon$-neighborhood  \\
    \cite{bolte2024inexact} & GD  & Coercive & Constant & None  & A.s. to $\varepsilon$-neighborhood  \\
    Theorem \ref{Theo_convergence_main}& \eqref{Eq_Framework}  & Coercive & Non-diminishing & RR  & A.s. to $\varepsilon$-neighborhood  \\
    Theorem \ref{Theo_convergence_diminishing}& \eqref{Eq_Framework}  & Coercive & Diminishing in $o(1/\log(k))$ & WRS  & H.p. to $\{x \in \Rn: 0\in \D_f(x)\}$  \\
    Theorem \ref{Theo_convergence_ADAM_reshuffling}& \eqref{Eq_Framework_ADAM} & Coercive & Non-diminishing & RR  & A.s. to $\varepsilon$-neighborhood \\
    Theorem \ref{Theo_convergence_ADAM_WRS}& \eqref{Eq_Framework_ADAM}  & Coercive & Diminishing in $o(1/\log(k))$ & WRS  & H.p. to $\{x \in \Rn: 0\in \D_f(x)\}$   \\
    \hline
    \end{tabular}
    \caption{A brief comparison between our results and existing results on the stability of stochastic subgradient methods. Here ``A.s.'' , ``H.p'' ,``WRS'' and ``RR''are the abbreviations of ``almost sure convergence'' , ``high-probability convergence'',  ``with replacement sampling''  and ``random reshuffling'' respectively. }
    \label{Table_stability_methods}
    \end{table}
\end{itemize}

\subsection{Organization}
The rest of this paper is organized as follows. In Section 2, we provide an overview of the notation used throughout the paper and present key concepts related to nonsmooth analysis and differential inclusion. In Section 3, we present a general framework for establishing the global stability of subgradient methods.  Section 4 focuses on the convergence properties of our proposed scheme \eqref{Eq_Framework}, in the aspect of its global stability, global convergence, and avoidance of spurious stationary points introduced by the conservative field. Moreover, Section 5 extends these results to the Adam-family methods in \eqref{Eq_Framework_ADAM}. Finally, we conclude the paper in the last section.

\section{Preliminaries}
\subsection{Notations}
\label{Subsection_notations}
We denote $\inner{\cdot, \cdot}$ as the standard inner product and $\norm{\cdot}$ as the $\ell_2$-norm of a vector or an operator, while $\norm{\cdot}_p$ refers to the $\ell_p$-norm of a vector or an operator.  $\bb{B}_{\delta}(x):= \{ \tilde{x} \in \Rn: \norm{\tilde{x} - x}^2 \leq \delta^2 \}$ refers to the ball
centered at $x$ with radius $\delta$. For a given set $\ca{Y}$, $\mathrm{dist}(x, \ca{Y})$ denotes the distance between $x$ and the set $\ca{Y}$, i.e. $\mathrm{dist}(x, \ca{Y}) := \mathop{\arg\min}_{y \in \ca{Y}} ~\norm{x-y}$,  and $\conv \ca{Y}$ denotes the convex hull of $\ca{Y}$. For any $x \in \Rn$, we denote $\inner{x, \ca{Y}} := \{\inner{x, y}: y \in \ca{Y}\}$. Moreover, for $\ca{Z} \subset \bb{R}$, we say $\ca{Z} \geq 0$ if $\inf_{z \in \ca{Z}} z\geq 0$. For any given sets $\ca{X}$ and $\ca{Y}$, $\ca{X} + \ca{Y}:= \{x+y: x\in \ca{X}, ~y\in \ca{Y}\}$. In addition,  the (Hausdorff) distance between the sets $\ca{X}$ and $\ca{Y}$ is defined as $$\mathrm{dist}\left( \ca{X}, \ca{Y} \right) := \max\left\{ \sup_{x \in \ca{X}}\mathrm{dist}(x, \ca{Y}),~ \sup_{y \in \ca{Y}}\mathrm{dist}(y, \ca{X}) \right\}.$$  
Furthermore, we define the set-valued mappings $\mathrm{sign}:\Rn \rightrightarrows \Rn$  and  $\mathrm{regu}:\Rn \rightrightarrows \Rn$ as follows: 
\begin{equation*}
	\left(\mathrm{sign}(x)\right)_i = \begin{cases}
		\{-1\} & x_i<0;\\
		[-1, 1] & x_i = 0; \\
		\{1\} & x_i > 0,
	\end{cases}
	\quad \text{and} \quad 
	\mathrm{regu}(m) := 
	\begin{cases}
		\left\{\frac{m}{\norm{m}} \right\} & m \neq 0;\\
		\{z \in \Rn: \norm{z} \leq 1 \} & m = 0.\\
	\end{cases}
\end{equation*}
Then it is easy to verify that $\mathrm{sign}(x)$ and $\mathrm{regu}(x)$ are the Clarke subdifferential of the nonsmooth functions $x \mapsto \norm{x}_1$ and  $x \mapsto \norm{x}$, respectively. For a prefixed constant $C>0$, we define the clipping mapping $\mathrm{clip}_C(x) := \{\min\{\max\{x, -C\}, C\} \}$.

In addition, we denote $\bb{R}_+$ as the set of all nonnegative real numbers, while $\bb{N}_+$ denotes the set of all nonnegative integers. For any $N> 0$, $[N]:= \{1,...,N\}$.  Moreover,  $\mu^d$ refers to the Lebesgue measure on $\bb{R}^d$, and when the dimension $d$ is clear from the context, we write the Lebesgue measure as $\mu$ for brevity. Furthermore, given any two sets $A \subseteq B \subseteq \bb{R}^d$, we say  $A$ is zero-measure if $\mu(A) = 0$, and $A$ is a full-measure subset of $B$ if $\mu(B \setminus A) = 0$.

\subsection{Nonsmooth analysis}
\label{Section_CF}

In this part, we briefly introduce the concept of conservative field, which can be applied to characterize how nonsmooth neural networks are differentiated by AD algorithms. We begin this part with the definition of the Clarke subdifferential. 
\begin{defin}
	\label{Defin_Clarke_Subdifferential}
	For any given locally Lipschitz continuous function $f: \Rn \to \bb{R}$, the Clarke subdifferential of $f$, denoted by $\partial f$, is defined as 
	\begin{equation*}
		\partial f(x) := \conv\left( \left\{ \lim_{k\to \infty} \nabla f(\zk): \text{$f$ is differentiable at $\{\zk\}$, and } \lim_{k\to \infty} \zk = x   \right\}  \right).
	\end{equation*}
\end{defin}

\begin{defin}
	A set-valued mapping $\caH: \Rn \rightrightarrows \bb{R}^s$ is a mapping from $\Rn$ to a collection of subsets of $\bb{R}^s$. $\caH$ is said to have closed graph or is graph-closed, if the graph of $\caH$, defined by
	\begin{equation*}
		\mathrm{graph}(\caH) := \left\{ (w,z) \in \Rn \times \bb{R}^s: w \in \bb{R}^n, z \in \caH(w) \right\},
	\end{equation*}
	is a closed subset of $\Rn \times \bb{R}^s$.
\end{defin}

\begin{defin}
	\label{Defin_locally_bounded}
	A set-valued mapping $\caH: \Rn \rightrightarrows \bb{R}^s$ is said to be locally bounded, if for any $x \in \Rn$, there is a neighborhood $V_x$ of $x$ such that $\cup_{y \in V_x}\caH(y)$ is a bounded subset of $\bb{R}^s$. Moreover, $\caH$ is said to be convex-valued, if for any $x \in \Rn$, $\caH(x)$ is a convex subset of $\bb{R}^s$. 
\end{defin}

The following lemma illustrates that the composition of two locally bounded closed-graph set-valued mappings is locally bounded and is graph-closed. The proof of Lemma \ref{Le_closed_graph_2} directly follows from Definition \ref{Defin_locally_bounded}, hence is omitted for simplicity. 
\begin{lem}
	\label{Le_closed_graph_2}
	Suppose both $\D_{g}$ and $\D_h$ are locally bounded set-valued graph-closed mappings. Then the set-valued mapping $\D_g\circ \D_h$ is a locally bounded set-valued graph-closed mapping.  
\end{lem}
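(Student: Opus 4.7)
The plan is to verify the two conditions separately, each time combining one property of $\D_g$ with the same property of $\D_h$ via a compactness argument.

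First I would prove local boundedness of $\D_g \circ \D_h$. Fix $x \in \Rn$. By local boundedness of $\D_h$, pick a neighborhood $V_x$ of $x$ such that $B := \bigcup_{y \in V_x} \D_h(y)$ is bounded in $\bb{R}^s$; then its closure $\overline{B}$ is compact. For every $y \in \overline{B}$ local boundedness of $\D_g$ gives a neighborhood $W_y$ on which $\D_g$ is uniformly bounded. Extract a finite subcover $W_{y_1}, \ldots, W_{y_N}$ of $\overline{B}$; the union of the corresponding bounding sets is bounded, and it contains $\bigcup_{z \in \overline{B}} \D_g(z)$. Since $(\D_g \circ \D_h)(V_x) \subset \D_g(B) \subset \D_g(\overline{B})$, local boundedness of the composition at $x$ follows. (The definition of $(\D_g \circ \D_h)(x)$ I use is $\bigcup_{y \in \D_h(x)} \D_g(y)$, which should be recalled at the top of the proof for clarity.)

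Next I would verify the closed-graph property. Take any sequence $(x_k, z_k) \to (x, z)$ with $z_k \in (\D_g \circ \D_h)(x_k)$, so there exist $y_k \in \D_h(x_k)$ with $z_k \in \D_g(y_k)$. The key observation is that the tail $\{x_k\}$ lies inside any fixed neighborhood of $x$, so by the local boundedness of $\D_h$ the sequence $\{y_k\}$ is eventually bounded. Pass to a subsequence so that $y_k \to y$ for some $y$. The closed graph of $\D_h$ applied to $(x_k, y_k) \to (x, y)$ gives $y \in \D_h(x)$, and the closed graph of $\D_g$ applied to $(y_k, z_k) \to (y, z)$ gives $z \in \D_g(y)$. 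Hence $z \in (\D_g \circ \D_h)(x)$, as required.

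I do not expect any real obstacle; both steps are standard topological manipulations with set-valued maps. The only subtlety worth flagging is that the finite subcover step in local boundedness genuinely uses compactness of $\overline{B}$, and in the closed-graph step it is crucial to invoke local boundedness of $\D_h$ to extract a convergent subsequence of $\{y_k\}$ before applying the closed-graph hypotheses one after the other.
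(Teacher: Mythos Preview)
Your proposal is correct and follows essentially the same approach as the paper: a finite-subcover argument over the compact set $\overline{B}$ for local boundedness, and a subsequence extraction on the intermediate points $y_k$ followed by two successive applications of the closed-graph hypothesis for the graph closure. If anything, your version is slightly more careful in explicitly invoking local boundedness of $\D_h$ to justify the existence of a convergent subsequence of $\{y_k\}$, a step the paper leaves implicit.
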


In the following definitions, we present the definition of conservative field and its corresponding potential function. 
\begin{defin}
	An absolutely continuous curve is an absolutely continuous mapping $\gamma: \bb{R}_+ \to \Rn $ whose derivative $\gamma'$ exists almost everywhere in $\bb{R}$ and $\gamma(t) - \gamma(0)$ equals to the Lebesgue integral of $\gamma'$ between $0$ and $t$ for all $t \in \bb{R}_+$, i.e.,
	\begin{equation*}
		\gamma(t) = \gamma(0) + \int_{0}^t \gamma'(u) \mathrm{d} u, \qquad \text{for all $t \in \bb{R}_+$}.
	\end{equation*}
\end{defin}

\begin{defin}
	\label{Defin_conservative_field}
	Let $\ca{D}$ be a set-valued mapping from $\bb{R}^{n}$ to subsets of $\bb{R}^{n}$. Then we call $\ca{D}$ as a conservative field if $\D$ satisfies the following properties:
	\begin{enumerate}
		\item $\D$  is locally bounded, graph-closed, and compact-valued. 
		\item For any absolutely continuous curve $\gamma: [0,1] \to \bb{R}^{n\times p} $ satisfying $\gamma(0) = \gamma(1)$, we have
		\begin{equation}
			\label{Eq_Defin_Conservative_mappping}
			\int_{0}^1 \max_{v_t \in \ca{D}(\gamma(t)) } \inner{\gamma'(t), v_t} \mathrm{d}t = 0, 
		\end{equation}
		where the integral is understood to be in the Lebesgue sense. 
	\end{enumerate}
\end{defin}

It is important to note that any conservative field is locally bounded, as illustrated in   \citep[Remark 3]{bolte2021conservative}. Moreover, \cite{bolte2021conservative} shows that any conservative field $\D$ uniquely determines a function through the path integral, as illustrated in the following definition.

\begin{defin}
	\label{Defin_conservative_field_path_int}
	Let $\ca{D}$ be a conservative field in $\bb{R}^{n}$. Then with any given $x_0 \in \bb{R}^{n}$, we can define a function $f: \Rn \to \bb{R}$ through the path integral
	\begin{equation}
		\label{Eq_Defin_CF}
		\begin{aligned}
			f(x) = &{} f(x_0) + \int_{0}^1 \max_{v_t \in \ca{D}(\gamma(t)) } \inner{\gamma'(t), v_t} \mathrm{d}t
			= f(x_0) + \int_{0}^1 \min_{v_t \in \ca{D}(\gamma(t)) } \inner{\gamma'(t), v_t} \mathrm{d}t\text{,}
		\end{aligned}
	\end{equation} 
	for any absolutely continuous curve $\gamma$ that satisfies $\gamma(0) = x_0$ and $\gamma(1) = x$.  Then $f$ is called a potential function for $\ca{D}$, or that $\ca{D}$ is a conservative field for $f$. Such function $f$ is also called path-differentiable. 
\end{defin}

To emphasize that $\D$ can be considered a generalization of the Clarke subdifferential, \cite{bolte2021conservative} presents the following lemma.
\begin{lem}[Theorem 1 and Corollary 1 in \cite{bolte2021conservative}]
	\label{Le_Defin_CF_inclusion}
	Let $f:\Rn \to \bb{R}$ be a path-differentiable function that admits $\D_f$ as its conservative field. Then $\D_f(x) = \{\nabla f(x)\}$ almost everywhere.  Moreover,  $\partial f$ is a conservative field for $f$, and for all $x \in \Rn$, it holds that 
	\begin{equation*}
		\partial f(x) \subseteq \conv(\D_f(x)). 
	\end{equation*} 
\end{lem}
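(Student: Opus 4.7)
The plan is to first establish the set-theoretic inclusion $\partial f(x) \subseteq \conv(\D_f(x))$ pointwise, and then leverage this inclusion together with the path-integral property already guaranteed by $\D_f$ to verify that $\partial f$ itself satisfies all three clauses of Definition \ref{Defin_conservative_field}.

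For the inclusion, I would combine Rademacher's theorem with Lemma \ref{Le_Conservative_as_gradient}. Let $\Omega_f$ denote the full-measure set on which $f$ is differentiable, and let $\Omega' \subseteq \Omega_f$ be the further full-measure subset on which $\D_f(x) = \{\nabla f(x)\}$; on $\Omega'$ the classical gradient agrees with the unique element of $\D_f(x)$. Since $\Omega_f^c \cup (\Omega_f \setminus \Omega')$ has Lebesgue measure zero, the classical limit-of-gradients characterization of the Clarke subdifferential in \cite{clarke1990optimization} yields
\begin{equation*}
\partial f(x) = \conv\left\{ \lim_{k\to\infty} \nabla f(x_k) :\ x_k \to x,\ x_k \in \Omega',\ \{\nabla f(x_k)\}\text{ converges} \right\}.
\end{equation*}
For any such sequence $\nabla f(x_k) \in \D_f(x_k)$, and by the closed-graph property in Definition \ref{Defin_conservative_field} each cluster point lies in $\D_f(x)$; passing to convex hulls gives $\partial f(x) \subseteq \conv(\D_f(x))$.

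To finish, nonemptiness, compactness, and closed graph of $\partial f$ are standard for locally Lipschitz $f$ (see \cite{clarke1990optimization}), so only the path-integral axiom \eqref{Eq_Defin_Conservative_mappping} remains. Here I would exploit the double equality in Definition \ref{Defin_conservative_field_path_int} for $\D_f$: for any absolutely continuous loop $\gamma$ with $\gamma(0) = \gamma(1)$,
\begin{equation*}
\int_0^1 \left( \max_{v \in \D_f(\gamma(t))} \inner{\gamma'(t), v} - \min_{v \in \D_f(\gamma(t))} \inner{\gamma'(t), v} \right) \mathrm{d}t = 0.
\end{equation*}
Since the integrand is pointwise nonnegative, it vanishes for almost every $t \in [0,1]$, so for a.e. $t$ the linear functional $v \mapsto \inner{\gamma'(t), v}$ is constant on $\D_f(\gamma(t))$, and therefore on $\conv \D_f(\gamma(t))$. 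By the inclusion just proven, $\partial f(\gamma(t)) \subseteq \conv \D_f(\gamma(t))$, so $\max_{v \in \partial f(\gamma(t))} \inner{\gamma'(t), v}$ takes this same constant value for a.e. $t$ and integrates to $\int_0^1 \max_{v \in \D_f(\gamma(t))} \inner{\gamma'(t), v}\, \mathrm{d}t = 0$, which is \eqref{Eq_Defin_Conservative_mappping} for $\partial f$.

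The main obstacle will be ensuring the measurability and well-definedness of $t \mapsto \max_{v \in \partial f(\gamma(t))} \inner{\gamma'(t), v}$, which is handled by the upper semicontinuity and compact-convex-valuedness of $\partial f$ combined with continuity of $\gamma$. A secondary subtlety is justifying that Clarke's limit-of-gradients formula may be applied with approximating points restricted to the full-measure subset $\Omega'$ rather than all of $\Omega_f$; this is legitimate because the formula permits discarding any zero-measure exceptional set, in particular $\Omega_f \setminus \Omega'$.
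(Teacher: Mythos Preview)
The paper does not supply its own proof of this lemma; it is simply quoted as Corollary~1 of \cite{bolte2021conservative}. Your argument is correct and is essentially the standard one underlying that reference: the inclusion $\partial f(x)\subseteq\conv(\D_f(x))$ via Rademacher, Lemma~\ref{Le_Conservative_as_gradient}, Clarke's gradient formula (with the null set $\Omega_f\setminus\Omega'$ discarded), and the closed graph of $\D_f$; then the circulation axiom for $\partial f$ by observing that the max--min equality in Definition~\ref{Defin_conservative_field_path_int} forces $v\mapsto\inner{\gamma'(t),v}$ to be constant on $\D_f(\gamma(t))$ for a.e.\ $t$, hence on $\conv\D_f(\gamma(t))\supseteq\partial f(\gamma(t))$. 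Both the measurability point and the legitimacy of restricting Clarke's formula to $\Omega'$ are handled correctly.
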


Lemma \ref{Le_Defin_CF_inclusion} illustrates that for any Clarke stationary point $x^*$ of $f$, it holds that $0 \in \conv(\D_f(x^*))$. Therefore, we can employ the concept of conservative field to characterize the optimality for \eqref{Prob_Ori}, as illustrated in the following definition. 
\begin{defin}
	Let $f:\Rn \to \bb{R}$ be a path-differentiable function that admits $\D_f$ as its conservative field. Then we say $x$ is a $\D_f$-stationary point of $f$ if $0 \in \conv(\D_f(x))$. In particular, we say that $x$ is a $\partial f$-stationary point of $f$ if $0 \in \partial f(x)$. 
\end{defin}

The class of path-differentiable functions considered in this paper is general enough to encompass a wide range of objective functions encountered in real-world machine learning problems. As demonstrated in \cite[Section 5.1]{davis2020stochastic}, any Clarke regular function belongs to the category of path-differentiable functions. Consequently, all differentiable functions and weakly convex functions are potential functions. Another important class of functions is the so-called definable functions, which are functions whose graphs are definable in an $o$-minimal structure \cite[Definition 5.10]{davis2020stochastic}. As established in \cite{van1996geometric}, any definable function is also path-differentiable \cite{davis2020stochastic, bolte2021conservative}. As illustrated in the Tarski-Seidenberg theorem \cite{bierstone1988semianalytic}, any semi-algebraic function is definable. Moreover, \cite{wilkie1996model} demonstrates the existence of an $o$-minimal structure that contains both the graph of the exponential function and all semi-algebraic sets. 
Furthermore, it is worth mentioning that definability is preserved under finite summation and composition \cite{davis2020stochastic, bolte2021conservative}. Therefore, for any neural network constructed using definable building blocks, its loss function is definable and, consequently, a path-differentiable function. In addition, it is worth noting that the Clarke subdifferentials of definable functions are definable \cite{bolte2021conservative}. As a result, for any neural network constructed using definable building blocks, the conservative field corresponding to an automatic differentiation (AD) algorithm is also definable. 

The following proposition demonstrates that the definability of both the objective function $f$ and its conservative field $\mathcal{D}_f$ leads to the nonsmooth Morse-Sard \cite{bolte2007clarke} property for \eqref{Prob_Ori}. 
\begin{prop}
	\label{Prop_definable_regularity}
	Let $f$ be a path-differentiable function that admits $\D_f$ as its conservative field. Suppose both $f$ and $\D_f$ are definable over $\Rn$, then the set $\{f(x): 0\in \conv(\D_f(x))\}$ is finite. 
\end{prop}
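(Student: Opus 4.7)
The plan is to proceed by a definable stratification of $\Rn$ followed by an application of Morse--Sard on each stratum. The crucial fact is that $f$ and $\D_f$ live in the same $o$-minimal structure, so the ambient space decomposes into finitely many well-behaved pieces on which the conservative field reduces to a genuine gradient.

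First, I would invoke the $o$-minimal Whitney stratification theorem to partition $\Rn$ into finitely many definable $C^1$-manifolds $M_1, \ldots, M_N$, chosen fine enough that $f|_{M_i}$ is $C^1$ on each stratum. On each such stratum I would then establish the \emph{projection identity}: for every $x \in M_i$ and every $v \in \D_f(x)$, the orthogonal projection of $v$ onto the tangent space $T_x M_i$ equals the intrinsic gradient $\nabla_{M_i}(f|_{M_i})(x)$, and in particular is single-valued in $v$. To prove this, I would pick an arbitrary $C^1$ definable curve $\gamma \subset M_i$ with $\gamma(0)=x$ and prescribed tangent $\gamma'(0) \in T_x M_i$, and combine the two formulas (max and min) in Definition \ref{Defin_conservative_field_path_int}: both integrals compute $f(\gamma(1)) - f(\gamma(0))$, so $\max_{v\in\D_f(\gamma(t))} \inner{\gamma'(t),v} = \min_{v\in\D_f(\gamma(t))} \inner{\gamma'(t),v}$ for almost every $t$, i.e.\ $\inner{\gamma'(t),v}$ takes a common value equal to $(f\circ\gamma)'(t)$. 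Evaluating at $t=0$ and varying $\gamma'(0)$ over $T_x M_i$ gives the projection identity.

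Second, I would observe that if $x \in M_i$ satisfies $0 \in \D_f(x)$, then the projection identity forces $\nabla_{M_i}(f|_{M_i})(x) = 0$, so $x$ is a critical point of the definable $C^1$ function $f|_{M_i}$ on the definable manifold $M_i$. Hence $S_i := \{x \in M_i : 0 \in \D_f(x)\}$ is contained in the critical set of $f|_{M_i}$. I would then apply the definable Morse--Sard theorem stratum by stratum: the image $f(S_i)$ is a definable subset of $\bb{R}$ of Lebesgue measure zero. Since every definable subset of $\bb{R}$ is a finite disjoint union of points and open intervals, a measure-zero definable subset of $\bb{R}$ must be finite. Therefore each $f(S_i)$ is finite, and $\{f(x) : 0 \in \D_f(x)\} = \bigcup_{i=1}^N f(S_i)$ is a finite union of finite sets, hence finite.

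The main obstacle is establishing the projection identity, since it is the single step that genuinely invokes the conservative-field structure of $\D_f$ rather than merely definability; the surrounding $o$-minimal machinery (cell decomposition, definability of the stratification, finiteness of connected components) and the definable Morse--Sard theorem would be treated as black boxes cited from \cite{bolte2007clarke,bolte2021conservative}. A secondary technicality is matching the stratification so that $\D_f|_{M_i}$ has the continuous/single-valued projected selection used above, which requires refining the stratification using definability of the graph of $\D_f$, not just of $f$.
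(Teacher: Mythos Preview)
The paper does not prove this statement; it is quoted verbatim as Theorem 5 of \cite{bolte2021conservative} with no argument given. Your outline is essentially the proof in that reference: a definable $C^r$ stratification adapted to both $f$ and $\mathrm{graph}(\D_f)$, the projection formula on each stratum (what \cite{bolte2021conservative} calls the variational stratification), and then the definable Morse--Sard theorem stratum by stratum.

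One point to tighten: your derivation of the projection identity argues that $\max_{v\in\D_f(\gamma(t))}\inner{\gamma'(t),v}=\min_{v\in\D_f(\gamma(t))}\inner{\gamma'(t),v}$ for \emph{almost every} $t$, and then says ``evaluating at $t=0$''. As written this is a gap, since $t=0$ need not belong to the full-measure set. The fix is precisely the ``secondary technicality'' you flag at the end: once the stratification is refined so that $\D_f$ restricted to each stratum has a continuous (indeed definable $C^1$) projected selection, the identity upgrades from almost-everywhere to everywhere on $M_i$ by continuity, and pointwise evaluation becomes legitimate. In \cite{bolte2021conservative} this is handled by stratifying the graph of $\D_f$ simultaneously with $f$, which is exactly why the hypothesis requires $\D_f$ (not just $f$) to be definable. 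So your plan is correct, but promote that refinement from an afterthought to the first step, before attempting the projection identity.
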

\begin{proof}
	As illustrated in \cite[Theorem 4]{bolte2021conservative}, when both $f$ and $\D_f$ are definable over $\Rn$,  for any $r \in \bb{N}_+$, it holds that $(f, \D_f)$ has a $\ca{C}^r$ variational stratification \cite[Definition 5]{bolte2021conservative}. Then  $(f, \conv(\D_f))$ also has a $\ca{C}^r$ variational stratification. Therefore, together with \cite[Theorem 5]{bolte2021conservative}, we can conclude that the set $\{f(x): 0 \in \conv(\D_f(x))\}$ is a finite subset of $\bb{R}$.
\end{proof}

\subsection{Differential inclusion}
In this subsection, we introduce some fundamental concepts related to the concept of differential inclusion, which is essential in establishing the convergence properties of subgradient methods, as discussed in \cite{benaim2005stochastic,davis2020stochastic,bolte2021conservative}.   
\begin{defin}
	\label{Defin_DI}
	For any locally bounded set-valued mapping $\caH: \Rn \rightrightarrows \Rn$ that is nonempty convex-valued, and graph-closed, we say that the absolutely continuous mapping $\gamma:\bb{R}_+ \to \Rn$ is a solution (also called trajectory) of the differential inclusion 
	\begin{equation}
		\label{Eq_def_DI}
		\frac{\mathrm{d} x}{\mathrm{d}t} \in -\caH(x),
	\end{equation}
	with initial point $x_0$, if $\gamma(0) = x_0$ and $\dot{\gamma}(t) \in -\caH(\gamma(t))$ holds for almost every $t\geq 0$. 
\end{defin}

We now introduce the concept of Lyapunov function for the differential inclusion \eqref{Eq_def_DI} with a stable set $\A$. 
\begin{defin}
	\label{Defin_Lyapunov_function}
	Let $\A \subset \Rn$ be a closed set. A continuous function $\Psi:\Rn \to \bb{R}$ is referred to as a Lyapunov function for the differential inclusion \eqref{Eq_def_DI}, with the stable set $\A$, if it satisfies the following conditions:
	\begin{itemize}
		\item For any $\gamma$ that is a solution for \eqref{Eq_def_DI} with $\gamma(0) \notin \A$, it holds that $\Psi(\gamma(t)) < \Psi(\gamma(0))$ for any $t>0$.
		\item For any $\gamma$ that is a solution for \eqref{Eq_def_DI} with $\gamma(0) \in \A$, it holds that $\Psi(\gamma(t)) \leq \Psi(\gamma(0))$ for any $t\geq0$.
	\end{itemize}
\end{defin}

\begin{defin}
    \label{Defin_approximation_H}
	For any set-valued mapping $\caH :\Rn \rightrightarrows \Rn$ and any $\delta \geq 0$, we denote the set-valued mapping $\caH^{\delta}:\Rn \rightrightarrows \Rn$ as
	\begin{equation*}
		\caH^{\delta}(x) := \bigcup_{y \in \bb{B}_{\delta}(x)} (\caH(y) + \bb{B}_{\delta}( 0)). 
	\end{equation*}
	Furthermore, we define $\norm{\caH(x)} := \sup\{\norm{d}: d \in \caH(x)\}$. 
\end{defin}

Now consider the sequence $\{\xk\}$ generated by the  following update scheme,  
\begin{equation}
	\label{Eq_def_Iter}
	\xkp = \xk - \eta_kd_k,
\end{equation}
where $\{\eta_k\}$ is a diminishing positive sequence of real numbers. 
We first define the mappings $\lambda: \bb{N}_+ \to \bb{R}_+$ and $\Lambda: \bb{R}_+ \to \bb{N}_+$ as
\begin{equation}
	\label{Eq_def_lambda_Lambda}
	\lambda(0) := 0, \quad \lambda(i) := \sum_{k = 0}^{i-1} \eta_k, \quad \Lambda(t) := \sup  \{k \geq 0: t\geq \lambda(k)\}.
\end{equation}
Specifically, $\Lambda(t) = l$ if $\lambda(l) \leq t < \lambda(l+1)$ for all integer $l \geq 0$. 

Then we define the  (continuous-time) interpolated process of $\{\xk\}$ generated by \eqref{Eq_def_Iter} as follows. 
\begin{defin}
	The  (continuous-time) interpolated process of $\{\xk\}$ generated by \eqref{Eq_def_Iter} with respect to $\{\eta_k\}$ is the mapping $\hat{x}: \bb{R}_+ \to \Rn$ such that 
	\begin{equation*}
		\hat{x}(t) := x_{i} + \frac{t - \lambda(i)}{\eta_i} \left( x_{i+1} - x_{i} \right), \quad t\in[\lambda(i), \lambda(i+1)), \quad i = 0,1, \ldots . 
	\end{equation*}
\end{defin}

\section{Global Stability for General Framework}
In this section, we establish the global stability properties for stochastic subgradient methods. Section 3.1 establishes the global stability for \eqref{Eq_stable_iterate}, which extends the results from \cite{josz2023global,josz2023lyapunov}. Based on the proposed results, we establish the global stability for subgradient methods with reshuffling in Section 3.2. 
Additionally, similar to the settings in  \cite{bianchi2019constant,bianchi2022convergence}, Section 3.3 establishes our results on guaranteeing global stability of the update scheme outlined in \eqref{Eq_stable_random}, where the evaluation noises are introduced by with-replacement sampling. Consequently, our results can be applied to guarantee the global stability for stochastic subgradient methods that employ with-replacement sampling.  

\subsection{Basic assumptions and main results}
\label{Subsection_31}
 In this subsection, we aim to establish the global stability and convergence properties for the stochastic subgradient method in \eqref{Eq_stable_iterate}, with possibly non-diminishing stepsizes and general noises. Consider the corresponding continuous-time differential inclusion for \eqref{Eq_stable_iterate} below, 
\begin{equation}
	\label{Eq_stable_DI}
	\frac{\mathrm{d}\bar{x}}{\mathrm{d}t} \in -\caH(\bar{x}),
\end{equation}
we make the following assumptions on the discrete scheme  \eqref{Eq_stable_iterate} and the differential inclusion \eqref{Eq_stable_DI}. 
\begin{assumpt}
	\label{Assumption_f_definable}
	\begin{enumerate}
		\item The set-valued mapping $\caH: \Rn \rightrightarrows \Rn$ is convex-valued, graph-closed, and locally bounded.
		\item The function $\Psi: \Rn \to \bb{R}$ is a Lyapunov function for the differential inclusion \eqref{Eq_stable_DI} with stable set $\ca{A}$. Moreover, $\Psi$ is coercive and locally Lipschitz continuous over $\Rn$. 
		\item The set $\{\Psi(x): x \in \ca{A}\}$ is a finite subset of $\bb{R}$. 
		\item The sequence $\{\eta_k\}$  is positive and satisfies $\sum_{k = 0}^{\infty} \eta_k = \infty$. Moreover, the sequence $\{\delta_k\}$ is nonnegative. 
	\end{enumerate}
\end{assumpt}

Here we make some comments on Assumption \ref{Assumption_f_definable}. Assumption \ref{Assumption_f_definable}(1)-(2) are mild conditions. For any $r \in \bb{R}$,  let the level set of $\Psi$ be defined as 
\begin{equation}
	\label{Eq_defin_level_set}
	\ca{L}_{r} := \{x \in \Rn: \Psi(x) \leq r\}. 
\end{equation}
Then for any $r \in \bb{R}$,  the coercivity of $\Psi$ in Assumption \ref{Assumption_f_definable}(2) implies the compactness of $\ca{L}_r$.

Moreover, Assumption \ref{Assumption_f_definable}(3) is referred to as the weak Sard condition in \cite[Assumption B]{davis2020stochastic}, and is a frequently employed in various existing works \cite{benaim2005stochastic,borkar2009stochastic,davis2020stochastic,bolte2021conservative,josz2023lyapunov,josz2023convergence}. As demonstrated in \cite{bolte2021conservative}, Assumption \ref{Assumption_f_definable}(3) holds whenever $f$ is a definable function, with the selection of $\caH = \partial f$ and $\Psi = f$. In addition, as shown later in Section 4,  the Lyapunov function corresponding to \eqref{Eq_Framework} satisfies Assumption \ref{Assumption_f_definable}(3) under mild conditions.

We begin our analysis with the definition of two important concepts: \textit{upper-boundedness}, and \textit{asymptotic upper-boundedness}. 
\begin{defin}
	For any non-negative sequence $\{\hat{\eta}_k\}$ and given constants $\alpha_{ub}, \alpha > 0$, we say that $\{\hat{\eta}_k\}$ is upper-bounded by $\alpha_{ub}$ or is $\alpha_{ub}$-upper-bounded, if $\sup_{k\geq 0} \hat{\eta}_k \leq \alpha_{ub}$. Moreover, we say that $\{\hat{\eta}_k\}$ is asymptotically upper-bounded by $(\alpha_{ub}, \alpha)$ or is $(\alpha_{ub}, \alpha)$-asymptotically-upper-bounded, if 
	\begin{equation}
		\sup_{k\geq 0} \hat{\eta}_k \leq \alpha_{ub}, \quad \text{and}\quad \limsup_{k\to \infty} \hat{\eta}_k \leq \alpha. 
	\end{equation}
\end{defin}

Next, we present the following definition of two important concepts for the sequence $\{\xi_k\}$ in \eqref{Eq_stable_iterate}: \textit{controlled} evaluation noises and \textit{asymptotically-controlled} evaluation noises. 
\begin{defin}
	\label{Defin_controlled_noise}
	Given a sequence of positive numbers $\{\eta_k\}$ and constants $\alpha_{ub}, T > 0$, we say that a sequence of vectors $\{\xi_k\}$ is  $(\alpha_{ub}, T,  \{\eta_k\} )$-controlled, if for any $s \geq 0$, we have
	\begin{equation}
		\sup_{s \leq i\leq \Lambda(\lambda(s) + T) }  \norm{ \sum_{k = s}^i \eta_k \xi_{k+1}} \leq \alpha_{ub}. 
	\end{equation}
	Moreover, we say that a sequence of vectors $\{\xi_k\}$ is  $(\alpha_{ub}, \alpha,  T,  \{\eta_k\} )$-asymptotically-controlled, if it is $(\alpha_{ub}, T,  \{\eta_k\} )$-controlled and 
	\begin{equation}
		\limsup_{s \to \infty} \sup_{s \leq i\leq \Lambda(\lambda(s) + T) }  \norm{ \sum_{k = s}^i \eta_k \xi_{k+1}} \leq \alpha.  
	\end{equation}
\end{defin}

Now we present Proposition \ref{Prop_abstract_descrease_lyapunov}, which estimates the uniform decrease of the Lyapunov function  $\Psi$ along the trajectories of the differential inclusion \eqref{Eq_stable_DI} within a certain time interval $T_{ub}$. Existing works \cite{josz2023global,bolte2024inexact} are restricted to the setting $\caH = \partial f$. As a result, their considered stochastic subgradient descent method corresponds to the following differential inclusion 
\begin{equation}
    \label{Eq_existing_DI_SGD}
    \frac{\mathrm{d}\bar{x}}{\mathrm{d}t} \in -\partial f(\bar{x}),
\end{equation}
which admits $f$ as its Lyapunov function with stable set $\{x \in \Rn: 0 \in \partial f(x)\}$. Then the uniform decrease of $f$ along the trajectories of \eqref{Eq_existing_DI_SGD} directly follows from \cite{davis2020stochastic}. Compared with these existing results in \cite{josz2023global,bolte2024inexact}, our presented Proposition \ref{Prop_abstract_descrease_lyapunov} admits great flexibility in choosing the set-valued mapping $\caH$ by only assuming the existence of a Lyapunov function $\Psi$, as demonstrated in Assumption \ref{Assumption_f_definable}(2). The detailed proof of Proposition \ref{Prop_abstract_descrease_lyapunov} is presented in Appendix \ref{Subsection_proof_00} for simplicity. 
\begin{prop}
	\label{Prop_abstract_descrease_lyapunov}
	Suppose Assumption \ref{Assumption_f_definable} holds, and let $\X_0$ be any compact subset of $\Rn$. Moreover, we assume that for some $T_{ub} > 0$, there exists $r > 0$ such that for any trajectory $\bar{x}$ of \eqref{Eq_stable_DI} with $\bar{x}(0) \in \X_0$, it holds that $\sup_{0\leq t\leq T_{ub}} \norm{\bar{x}(t)} \leq r$. 
	Then for any  $\varepsilon > 0$ and any $T\in (0, T_{ub}]$, there exists $\iota > 0$ such that for any trajectory $\bar{x}$ of the differential inclusion \eqref{Eq_stable_DI}, with 
	\begin{equation}
		\bar{x}(0) \in \X_0, \quad \mathrm{dist}\left( \bar{x}(0), \A  \right) \geq  \varepsilon,
	\end{equation}
	we have
	\begin{equation}
		\Psi(\bar{x}(T)) \leq \Psi(\bar{x}(0)) - \iota. 
	\end{equation}
	With such $\X_0$, $\varepsilon$, and $T$, we call $\iota$ as a descending coefficient for \eqref{Eq_stable_DI} with respect to $(\X_0, \varepsilon, T)$. 
\end{prop}

In the following, with the definitions on the mappings $\lambda$ and $\Lambda$ in \eqref{Eq_def_lambda_Lambda},  we introduce Lemma \ref{Le_stability_close_DI_iter} to characterize the relationship between the discrete iteration sequence in \eqref{Eq_stable_iterate} and the trajectories of the continuous-time differential inclusion \eqref{Eq_stable_DI}. It is worth mentioning that \cite[Lemma 1]{josz2023lyapunov} can be viewed as a special case of Lemma \ref{Le_stability_close_DI_iter} with $\xi_k = 0$ and $\ca{H} = \partial f$. As a result, Lemma \ref{Le_stability_close_DI_iter} extends \cite[Lemma 1]{josz2023lyapunov} to the cases with general $\caH$ and evaluation noises $\{\xi_{k}\}$.  For a clear demonstration of our results, we present the detailed proof of Lemma \ref{Le_stability_close_DI_iter} in Appendix \ref{Subsection_proof_01}. 
\begin{lem}
	\label{Le_stability_close_DI_iter}
	Suppose Assumption \ref{Assumption_f_definable} holds, and let $\X_0$ be any compact subset of $\Rn$. Moreover, we assume that for some  $T> 0$, there exists $\bar{\alpha}, r > 0$ such that for  any $x_0 \in \X_0$, any $\bar{\alpha}$-upper-bounded sequences $\{\eta_k\}$ and $\{\delta_k\}$, and any $(\alpha, T, \{\eta_k\})$-controlled $\{\xi_k\}$, the sequence $\{\xk\}$ generated by \eqref{Eq_stable_iterate} satisfies 
	$\max\{\norm{\xk}: k\leq \Lambda(T)\} \leq r$. 
	
	Then for any $\varepsilon > 0$, there exists $\hat{\alpha}\in (0, \bar{\alpha}]$ such that for any $\hat{\alpha}$-upper-bounded sequences $\{\eta_k\}$ and $\{\delta_k\}$, any $(\hat{\alpha}, T, \{\eta_k\})$-controlled $\{\xi_k\}$, and any $\{\xk\}$ generated by \eqref{Eq_stable_iterate} with $x_0 \in \X_0$, there exists a trajectory $\bar{x}$ of the differential inclusion \eqref{Eq_stable_DI} such that 
	\begin{equation}
		\sup_{0\leq t\leq T}\norm{\bar{x}(t) - \hat{x}(t)} \leq \varepsilon.
	\end{equation}
	Here $\hat{x}$ is the interpolated process of $\{\xk\}$ with respect to the stepsizes $\{\eta_k\}$. 
\end{lem}

In the following theorem, we outline the main results on the global stability of \eqref{Eq_stable_iterate}. We first present Theorem \ref{Theo_abstract_stability} to demonstrate the global stability of the discrete update scheme \eqref{Eq_stable_iterate}. Theorem \ref{Theo_abstract_stability} establishes that any sequence $\{\xk\}$ generated by \eqref{Eq_stable_iterate} remains uniformly bounded,  whenever  $\{\eta_k\}$ and $\{\delta_k\}$ are $\alpha$-upper-bounded, and $\{\xi_k\}$ is $(\alpha, T, \{\eta_k\})$-controlled, for any sufficiently small $\alpha > 0$. The detailed proof of Theorem \ref{Theo_abstract_stability} is presented in Appendix \ref{Subsection_proof_1} for clarity. 
\begin{theo}
	\label{Theo_abstract_stability}
	Suppose Assumption \ref{Assumption_f_definable} holds, and let $\X_0$ be any compact subset of $\Rn$.   Then for any given $\tilde{r} > \max\{0, 4\sup_{x \in \X_0 \cup \A}\Psi(x)\}$, there exist $\alpha >0$, $ T>0 $ such that for any $\alpha$-upper-bounded sequences $\{\eta_k\}$ and $\{\delta_k\}$, and any $(\alpha, T, \{\eta_k\})$-controlled sequence $\{\xi_{k}\}$, the sequence $\{\xk\}$ generated by \eqref{Eq_stable_iterate} with $x_0 \in \X_0$ is restricted in $\ca{L}_{\tilde{r}}$.  
\end{theo}

Based on Theorem \ref{Theo_abstract_stability},  we prove in the following theorem that with sufficiently small stepsizes $\{\eta_k\}$ and approximation parameters $\{\delta_k\}$, coupled with sufficiently controlled evaluation noises $\{\xi_k\}$, the sequence $\{\xk\}$ in \eqref{Eq_stable_iterate} eventually stabilizes in a neighborhood of the stable set $\A$. The detailed proof of Theorem \ref{Theo_abstract_stability_convergence_sequence} is presented in Appendix \ref{Subsection_proof_2}, for a clearer presentation of our results. 
\begin{theo}
	\label{Theo_abstract_stability_convergence_sequence}
	Suppose Assumption \ref{Assumption_f_definable} holds, and let $\X_0$ be any compact subset of $\Rn$. Then for any $\varepsilon > 0$, there exist $\alpha > 0$ (depending on $\varepsilon$) and $\alpha_{ub}, T_{ub} > 0$ (independent of $\varepsilon$), such that for any $(\alpha_{ub}, \alpha)$-asymptotically-upper-bounded sequences $\{\eta_k\}$ and $\{\delta_k\}$ and any $(\alpha_{ub}, \alpha, T, \{\eta_k\})$-asymptotically-controlled sequence $\{\xi_{k}\}$, the sequence $\{\xk\}$ generated by \eqref{Eq_stable_iterate} with $x_0 \in \X_0$ satisfies 
	\begin{equation}
		\limsup_{k\to \infty} ~\mathrm{dist}\left(\xk, \A\right) \leq \varepsilon, \quad  \limsup_{k\to \infty} ~ \mathrm{dist}\left(\Psi(\xk),  \{\Psi(x): x \in \A\} \right) \leq \varepsilon.
	\end{equation}
\end{theo}

\begin{rmk}
	It is worth noting that \cite{josz2023global} exhibits similar results for those methods that are approximated by subgradient trajectories \cite[Definition 3]{josz2023global} with a constant stepsize. Based on the proposed framework in \cite{josz2023global}, the authors show the global stability of the SGD method and heavy-ball SGD method with constant stepsizes and random reshuffling. 
	
	Compared to the results presented in \cite{josz2023global,josz2023lyapunov}, Theorem \ref{Theo_abstract_stability_convergence_sequence} extends the results in \cite[Theorem 1, Corollary 1]{josz2023global}. Specifically, our results do not restrict $\caH$ to be chosen as $\partial f$, and allow the stepsizes ${\eta_k}$ to be dynamically changed throughout the iterations. Consequently, Theorem \ref{Theo_abstract_stability_convergence_sequence} facilitates the establishment of global stability across a broader range of subgradient methods and enables a flexible choice of stepsizes in their implementations.
\end{rmk}

\subsection{Subgradient methods using reshuffling}
\label{Subsection_32}
In this subsection, with a given collection $\{\ca{U}_i: 1\leq i\leq N\}$ of locally bounded graph-closed set-valued mappings, we consider the following discrete scheme, 
\begin{equation}
	\label{Eq_stable_reshuffling}
	\xkp \in \xk - \eta_k \ca{U}_{i_k}^{ p(\xk)\eta_k}(\xk). 
\end{equation}
Here we make the following assumptions on \eqref{Eq_stable_reshuffling}.
\begin{assumpt}
	\label{Assumption_stable_reshuffling}
	\begin{enumerate}
		\item The sequence of stepsizes $\{\eta_k\}$ is positive, $\sum_{k=0}^{\infty} \eta_k = \infty$. Moreover, for any $j_1, j_2 \in \bb{N}_+$,  $\eta_{j_1} = \eta_{j_2}$ holds whenever $\lfloor \frac{j_1}{N} \rfloor = \lfloor \frac{j_2}{N} \rfloor$. 
		\item $p: \Rn \to \bb{R}_+$ is a locally bounded function. 
		\item For any $x \in \Rn$, it holds that 
		\begin{equation*}
			\conv\left( \frac{1}{N} \sum_{i = 1}^N \ca{U}_i(x) \right) \subseteq \caH(x). 
		\end{equation*}
		\item The sequence of indexes $\{i_k\}$ is chosen from $[N]$ by reshuffling. That is, $\{i_k: jN\leq k < (j+1)N\} = [N]$ holds for any $j \in \bb{N}_+$. 
	\end{enumerate}
\end{assumpt}

Assumption \ref{Assumption_stable_reshuffling}(1) illustrates that the sequence of stepsizes $\{\eta_k\}$ remain unchanged in each epoch, which is a mild condition in practice. Moreover,  Assumption \ref{Assumption_stable_reshuffling}(2) addresses the inexactness in the evaluation of $\ca{U}_{i_k}$, while Assumption \ref{Assumption_stable_reshuffling}(3) shows that the scheme \eqref{Eq_stable_reshuffling} corresponds to the differential inclusion \eqref{Eq_stable_DI}. Furthermore, Assumption \ref{Assumption_stable_reshuffling}(4) illustrates that the sequence of indexes $\{i_k\}$ is drawn by reshuffling, in the sense that each index is selected exactly once in each epoch. Such a condition can be regarded as a generalization of random reshuffling.

Based on Theorem \ref{Theo_abstract_stability} and Theorem \ref{Theo_abstract_stability_convergence_sequence}, we present Proposition \ref{Prop_stable_reshuffling_UB} and  Theorem \ref{Theo_stable_reshuffling_convergence} to illustrate the convergence properties of the sequence $\{\xk\}$ in \eqref{Eq_stable_reshuffling}. For a more concise presentation, the proofs of Proposition \ref{Prop_stable_reshuffling_UB} and Theorem 
\ref{Theo_stable_reshuffling_convergence}
are presented in Appendix \ref{Subsection_appendix_reshuffling}. 

We first present Proposition \ref{Prop_stable_reshuffling_UB} to illustrate that with sufficiently small stepsizes $\{\eta_k\}$, the sequence $\{\xk\}$ generated by \eqref{Eq_stable_reshuffling} is uniformly bounded. 
\begin{prop}
	\label{Prop_stable_reshuffling_UB}
	Suppose Assumption \ref{Assumption_f_definable} and Assumption \ref{Assumption_stable_reshuffling} hold, and let $\X_0$ be any compact subset of $\Rn$. Then for any $\tilde{r} > 8 \max\{0, \sup_{x \in \X_0 \cap \A} \Psi(x) \}$, there exists $\alpha_{ub} > 0$ such that for any $\alpha_{ub}$-upper-bounded sequence $\{\eta_k\}$, the sequence $\{\xk\}$ in \eqref{Eq_stable_reshuffling} with $x_0 \in \X_0$ is restricted in $\ca{L}_{\tilde{r}}$. 
\end{prop}

Next, Theorem \ref{Theo_stable_reshuffling_convergence} shows that the sequence $\{\xk\}$ generated by \eqref{Eq_stable_reshuffling} eventually stabilizes in a neighborhood of the stable set $\A$. 
\begin{theo}
	\label{Theo_stable_reshuffling_convergence}
	Suppose Assumption \ref{Assumption_f_definable} and Assumption \ref{Assumption_stable_reshuffling} hold, and let $\X_0$ be any compact subset of $\Rn$. Then for any $\varepsilon > 0$, there exists $\alpha > 0$ (depending on $\varepsilon$) and $\alpha_{ub} > 0$ (independent of $\varepsilon$), such that for any $(\alpha_{ub}, \alpha)$-asymptotically-upper-bounded sequence $\{\eta_k\}$, the sequence $\{\xk\}$ generated by \eqref{Eq_stable_reshuffling} with $x_0 \in \X_0$ satisfies 
	\begin{equation*}
		\mathop{\lim\sup}_{k\to \infty} \mathrm{dist}\left(\xk, \A\right) \leq \varepsilon, \quad  \mathop{\lim\sup}_{k\to \infty} ~ \mathrm{dist}\left(\Psi(\xk),  \{\Psi(x): x \in \A\} \right) \leq \varepsilon.
	\end{equation*}
\end{theo}

\subsection{Subgradient methods using with-replacement sampling}
\label{Subsection_33}
In this subsection, we aim to establish the global stability for a wide range of stochastic subgradient methods, where the evaluation noises are introduced by with-replacement sampling. Let $(\Omega, \ca{F}, \bb{P})$ be the probability space, we consider the following subgradient methods, 
\begin{equation}
	\label{Eq_stable_random}
	\xkp \in \xk - c\eta_k\left( \caH^{\delta_k}(\xk) + \chi(\xk, \zeta_{k+1})  \right). 
\end{equation}
Here $\caH: \Rn \rightrightarrows \Rn$ is a locally bounded and graph-closed set-valued mapping. Moreover,  $\chi: \Rn \times \Omega \to \Rn$ characterizes the evaluation noises in the stochastic subgradient methods, while $\{\zeta_k\}$ is a sequence of elements in $\Omega$. Furthermore, $c > 0$ is a scaling parameter to the stepsizes.

The scheme \eqref{Eq_stable_random} includes \eqref{Eq_stable_reshuffling} using with-replacement sampling. That is, the indexes $\{i_k\}$ are independently selected in each iteration, thus allowing any given index to be selected for multiple times within a single epoch. 

We begin with the following assumptions on \eqref{Eq_stable_random}. 
\begin{assumpt}
	\label{Assumption_stochastic}
	\begin{enumerate}
		\item For any $x \in \Rn$, it holds that $\bb{E}_{\zeta \sim P}[\chi(x, \zeta)] = 0$. 
		Moreover, there exists a locally bounded function $\tilde{q}: \Rn \to \bb{R}_+$, such that for any $x \in \Rn$, $\norm{\chi(x, \zeta)} \leq \tilde{q}(x)$ holds for almost every $\zeta \in \Omega$. 
		\item The sequence $\{\zeta_k\}$ are drawn randomly and independently. 
		\item The sequence of stepsizes $\{\eta_k\}$ is a prefixed sequence of positive numbers, and satisfies
		\begin{equation}
			\sum_{k = 0}^{\infty} \eta_k = \infty, \quad \lim_{k\to\infty} \eta_k \log(k) = 0. 
		\end{equation}
		\item The sequence $\{\delta_k\}$ is nonnegative and satisfies $\lim_{k\to \infty} \delta_k = 0$. 
	\end{enumerate}
\end{assumpt}

In the following theorem, we prove the global stability for the scheme \eqref{Eq_stable_random}, in the sense that with sufficiently small $c$ and $\{\delta_k\}$, any cluster points of the sequence $\{\xk\}$ lies in $\A$ with high probability. Notice that the stepsizes in \eqref{Eq_stable_random} are scaled by the scaling parameter $c > 0$. Consequently, a sufficiently small scaling parameter $c$ results in sufficiently small stepsizes in \eqref{Eq_stable_random}. The detailed proof of Theorem \ref{Theo_stable_stochastic_main} is presented in Section \ref{Subsection_proof_sto} for clarity. 
\begin{theo}
	\label{Theo_stable_stochastic_main}
	Suppose Assumption \ref{Assumption_f_definable} and Assumption \ref{Assumption_stochastic} hold. Let $\X_0$ be any compact subset of $\Rn$ and $x_0 \in \X_0$ in \eqref{Eq_stable_random}.  Then for any $\varepsilon \in (0,1)$, there exists $\alpha > 0$ such that for any $c \in (0, \alpha)$ and any $\{\delta_k\}$ upper-bounded by $\alpha$, we have
	\begin{equation}
		\bb{P}\left(\lim_{k\to \infty} \mathrm{dist} \left(\xk, \A\right)  = 0\right) \geq 1-\varepsilon.
	\end{equation}
\end{theo}
It is worth mentioning that as the set $\{\Psi(x): x \in \A\}$ is finite, we can conclude that for any sequence $\{\xk\}$ satisfying $\lim_{k\to \infty} \mathrm{dist} \left(\xk, \A\right)  = 0$, the sequence $\{\Psi(\xk)\}$ converges.

\begin{rmk}
	To establish the global stability for subgradient methods, existing works \cite{josz2023global,josz2023lyapunov} and the results in  Section 3.3 impose strong assumptions on the stochasticity of these subgradient methods, in the sense that the indexes are selected by random reshuffling. Besides,  \cite{bianchi2019constant,bianchi2022convergence} establish the global stability for subgradient methods, where the stochasticity is characterized by a mapping from $\Rn \times \Omega$ to $\Rn$. The analysis in \cite{bianchi2019constant,bianchi2022convergence} can be applied to analyze the global stability of the SGD method that uses the with-replacement sampling techniques. 
	
	Similar to the subgradient methods analyzed in \cite{bianchi2019constant,bianchi2022convergence}, in Assumption \ref{Assumption_stochastic}, the stochasticity of the scheme \eqref{Eq_stable_random} is characterized by the function $\chi: \Rn \times \Omega \to \Rn$. Therefore, our analysis can be applied to establish the global stability for a wider range of SGD-type methods that employ the with-replacement sampling techniques.  
	
	On the other hand, different from the settings in \cite{bianchi2022convergence} that use constant stepsizes, we employ the diminishing stepsizes in \eqref{Eq_stable_random}, as illustrated in Assumption \ref{Assumption_stochastic}(3). The employment of diminishing stepsizes leads to the almost sure convergence to stationary points, while the employment of constant stepsizes in \cite{bianchi2022convergence} leads to the stabilization around the critical points. Therefore,  our results in Theorem \ref{Theo_stable_stochastic_main} offer a complementary perspective to those presented by \cite{bianchi2019constant,bianchi2022convergence}, thereby enhancing the understanding of convergence behaviors of SGD-type methods.
\end{rmk}

\section{Convergence Properties of SGD-type Methods}

In this section, we analyze the convergence properties of \eqref{Eq_Framework} based on the framework developed in Section 3. To establish the convergence properties of \eqref{Eq_Framework}, we impose the following assumptions on the objective function $f$.
\begin{assumpt}
	\label{Assumption_f}
	\begin{enumerate}
		\item For each $i \in [N]$, $f_i$ is a definable locally Lipschitz continuous function that admits a definable conservative field $\D_{f_i}$. 
		\item The objective function $f$ is coercive. 
	\end{enumerate}
\end{assumpt}
As discussed in Section \ref{Section_CF}, the class of definable functions is general enough to include a wide range of commonly employed neural networks, thus making Assumption \ref{Assumption_f}(1) mild in practical scenarios. Moreover,  we define the set-valued mapping $\D_f$ as follows,  
\begin{equation}
	\D_f (x) = \conv\left( \frac{1}{N} \sum_{i = 1}^N \D_{f_i}(x)  \right).
\end{equation}
From the calculus rules of conservative field established in \cite[Corollary 4]{bolte2021conservative}, $\D_f$ is a conservative field for $f$. 
In addition, from Proposition \ref{Prop_definable_regularity} and the definability of $f$, we can conclude that the set $\{f(x): 0 \in \D_f(x)\}$ is a finite subset of $\bb{R}$.

Then we make the following assumptions on \eqref{Eq_Framework}.
\begin{assumpt}
	\label{Assumption_framework}
	\begin{enumerate}
		\item The auxiliary function $\phi:\Rn \to \bb{R}$ is  locally Lipschitz continuous, definable and coercive. Moreover,  $\inner{m, \partial \phi(m) } > 0$ holds for any  $m \in \Rn \setminus \{0\}$. 
		\item There exists a compact subset $\X \subseteq \Rn \times \Rn$ such that $(x_0, m_0) \in \X$. 
		\item The parameters $\rho$ and $\tau$ satisfy $\rho  \geq 0$ and $\tau > 0$. 
	\end{enumerate}
\end{assumpt}

Assumption \ref{Assumption_framework}(1) imposes regularity conditions on the auxiliary function $\phi$. From Assumption \ref{Assumption_framework}(1), we can conclude that $\phi$ admits a unique critical point at $0$. In addition, it can be easily verified that this assumption holds whenever $\phi$ is a convex function with a unique minimizer at $0$. Consequently, a wide range of possibly nonsmooth functions can be used for $\phi$, such as $\phi(m) =  \norm{m}_1$ and $\phi(m) = \norm{m}$. Moreover, Assumption \ref{Assumption_framework}(2) requires the initial point $(x_0, m_0)$ of \eqref{Eq_Framework} to lie in a prefixed compact region $\X$, which is mild in practice. Furthermore, Assumption \ref{Assumption_framework}(3) is a standard assumption for SGD-type methods.

\subsection{Basic properties}
In this subsection, we analyze the basic properties of \eqref{Eq_Framework} under Assumption \ref{Assumption_f} and Assumption \ref{Assumption_framework}, based on the results in Theorem \ref{Theo_abstract_stability_convergence_sequence}.  In particular, we show the corresponding differential inclusion of the scheme \eqref{Eq_Framework}, together with its Lyapunov function and stable set.

With Assumption \ref{Assumption_f}, we introduce the function $h$ as follows: 
\begin{equation}
	h(x, m) := f(x)   + \frac{1}{\tau } \phi(m ).
\end{equation}
Then the next lemma shows that $h(x,m)$ is a path-differentiable function and provides the expression for its conservative field $\D_h$.
\begin{lem}
	\label{Le_poten}
	Suppose Assumption \ref{Assumption_f} and Assumption \ref{Assumption_framework} hold. Then $h(x, m)$ is a path-differentiable function that corresponds to the following convex-valued conservative field  
	\begin{equation}
		\D_{h}(x, m) := 
		\left[\begin{matrix}
			\D_f(x) \\
			\frac{1}{\tau } \partial \phi(m)
		\end{matrix}\right].
	\end{equation}
\end{lem}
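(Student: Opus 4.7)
The plan is to verify directly that $\D_h$ satisfies the three conditions of a conservative field in Definition \ref{Defin_conservative_field}, and then confirm that $h$ is its potential function via Definition \ref{Defin_conservative_field_path_int}. The whole argument rests on the product structure
\[
\D_h(x,m) = \D_f(x) \times \tfrac{1}{\tau}\D_\phi(m),
\]
which lets every property of $\D_h$ be reduced componentwise to the corresponding property of $\D_f$ and $\D_\phi$.

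First I would check the easy structural conditions. Nonemptyness, compactness and convexity of $\D_h(x,m)$ follow from the Cartesian product of two nonempty, compact, convex sets (using Assumption \ref{Assumption_f}(1) and Assumption \ref{Assumption_framework}(1)). For the closed graph, note that a sequence $\{(x_k, m_k, v_{x,k}, v_{m,k}/\tau)\}$ converging to $(\bar x, \bar m, \bar v_x, \bar v_m/\tau)$ satisfies $v_{x,k} \in \D_f(x_k)$ and $v_{m,k} \in \D_\phi(m_k)$ for every $k$; the closed-graph property of $\D_f$ and $\D_\phi$ gives the limiting inclusions, and hence $(\bar v_x, \bar v_m/\tau) \in \D_h(\bar x, \bar m)$.

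The central step is the path-integral identity. For any absolutely continuous curve $\gamma(t) = (\gamma_x(t), \gamma_m(t)) : [0,1] \to \Rn \times \Rn$, the linearity of the inner product yields
\[
\max_{v \in \D_h(\gamma(t))} \inner{\gamma'(t), v} = \max_{v_x \in \D_f(\gamma_x(t))} \inner{\gamma'_x(t), v_x} + \frac{1}{\tau}\max_{v_m \in \D_\phi(\gamma_m(t))} \inner{\gamma'_m(t), v_m},
\]
since maximizing a separable function over a product set decouples. Both $\gamma_x$ and $\gamma_m$ are absolutely continuous. If $\gamma(0) = \gamma(1)$, then $\gamma_x(0) = \gamma_x(1)$ and $\gamma_m(0) = \gamma_m(1)$, so applying Definition \ref{Defin_conservative_field} separately to $\D_f$ and $\D_\phi$ makes both integrated pieces vanish; this verifies the path-integral condition for $\D_h$.

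Finally, to verify that $h$ is the potential function of $\D_h$, I pick any absolutely continuous curve $\gamma$ joining $(x_0, m_0)$ to $(x, m)$ and again decompose the integrand as above. Applying the potential-function identity \eqref{Eq_Defin_CF} for $\D_f$ along $\gamma_x$ yields $f(x) - f(x_0)$, and likewise for $\D_\phi$ along $\gamma_m$ yields $\phi(m) - \phi(m_0)$; summing gives exactly $h(x,m) - h(x_0, m_0)$, completing the proof. There is no real obstacle here: every nontrivial work has already been done in establishing that $\D_f$ and $\D_\phi$ are conservative fields for $f$ and $\phi$, and the only observation needed is the separability of the inner product on a product space.
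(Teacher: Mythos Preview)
Your proposal is correct. The paper's own proof takes a shorter black-box route: it simply invokes the chain rule for conservative fields from \cite{bolte2021conservative} to conclude that $h=f+\tfrac{1}{\tau}\phi$ is a potential function admitting $\D_h$ as its conservative field, and then observes that a product of convex sets is convex. You instead unpack exactly the special case of that result needed here, verifying the three defining conditions of a conservative field and the potential-function identity directly from the separability $\D_h(x,m)=\D_f(x)\times\tfrac{1}{\tau}\D_\phi(m)$. The paper's approach is quicker and emphasizes that this is a routine instance of a known calculus rule; your approach is self-contained and makes explicit why nothing can go wrong---namely, that the inner product decouples over the product space so that every required property reduces componentwise. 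Both are valid, and the work is genuinely trivial either way.
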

\begin{proof}
	Notice that $f$ and $\phi$ are potential functions that admit $\D_f$  and $\partial \phi$ as their conservative field, respectively. Then by the chain rule of conservative field \cite{bolte2021conservative}, we can conclude that $h$ is a path-differentiable function that admits $\D_h$ as its conservative field. Moreover, as $\D_f$ and $\partial \phi$ are convex-valued over $\Rn$, it holds that $\D_h$ is convex-valued over $\Rn \times \Rn$.  This completes the proof. 
\end{proof}

The following proposition shows that the function $h$ is coercive and satisfies the weak Sard property \cite[Assumption B(1)]{davis2020stochastic}. 
\begin{prop}
	\label{Prop_lyapunov_regularity}
	Suppose Assumption \ref{Assumption_f} and Assumption \ref{Assumption_framework} hold. Then $h$ is a definable coercive function, and the set $\{h(x,m): 0 \in \D_f(x), m = 0\}$ is finite.
\end{prop}
\begin{proof}
	The coercivity of $h$ directly follows the coercivity of $f$ and $\phi$. Moreover, from the definability of $f$ and $\D_f$, we can conclude that the set $\{f(x): 0 \in \D_f(x) \}$ is a finite subset of $\bb{R}$. Furthermore, notice that 
	\begin{equation}
		\{h(x,m): 0 \in \D_f(x), m = 0\} = \left\{f(x) +\frac{1}{\tau}\phi(0): 0 \in \D_f(x) \right\} = \{f(x): 0 \in \D_f(x) \},
	\end{equation}
	we can conclude that the set $\{h(x,m): 0 \in \D_f(x), m = 0\}$ is finite. This completes the proof. 
\end{proof}

Next, let the set-valued mapping $\ca{G}:\Rn \times \Rn \rightrightarrows \Rn \times \Rn$ be defined as, 
\begin{equation}
	\label{Eq_Mapping_ST}
	\begin{aligned}
		\ca{G}(x, m) = 
		\left\{ \left[
		\begin{matrix}
			\partial \phi(m) + \rho d\\
			\tau m  -  \tau d\\
		\end{matrix}
		\right]: d \in \D_f(x) \right\}.
	\end{aligned}
\end{equation}
Moreover, for any $i \in [N]$, we define the set-valued mapping $\ca{G}_i:\Rn \times \Rn \rightrightarrows \Rn \times \Rn$ as 
\begin{equation}
	\begin{aligned}
		\ca{G}_i(x, m) = 
		\left\{ \left[
		\begin{matrix}
			\partial \phi(m) + \rho d\\
			\tau m  -  \tau d\\
		\end{matrix}
		\right]: d \in \D_{f_i}(x) \right\}.
	\end{aligned}
\end{equation}
Then it holds directly from Lemma \ref{Le_closed_graph_2} that $\ca{G}$ is convex-valued, locally bounded, and graph-closed. The following proposition illustrates that $h$ can be regarded as the Lyapunov function for the differential inclusion
\begin{equation}
	\label{Eq_DI_ST}
	\left( \frac{\mathrm{d} x}{\mathrm{d} t}, \frac{\mathrm{d} m}{\mathrm{d} t} \right) \in -\ca{G}(x, m). 
\end{equation} 
\begin{prop}
	\label{Prop_Lyapunov}
	Suppose Assumption \ref{Assumption_f} and Assumption \ref{Assumption_framework} hold, then $h(x, m)$ is a Lyapunov function for the differential inclusion \eqref{Eq_DI_ST}
	with the stable set $ \A_h := \{(x, m)\in \Rn \times \Rn: 0\in \D_f(x), m  = 0\}$.
\end{prop}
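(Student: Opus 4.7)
The plan is to differentiate $h$ along an arbitrary solution $\gamma(t) = (x(t), m(t))$ of \eqref{Eq_DI_ST} via the chain rule for conservative fields, observe that an exact cancellation reduces the derivative to $-\alpha \norm{d(t)}^2 - \inner{v(t), m(t)}$, and then combine Assumption \ref{Assumption_framework}(1) with the closed-graph structure of $\D_f$ and $\D_\phi$ to upgrade the resulting non-increase into strict decrease outside $\ca{B}$.

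By Lemma \ref{Le_poten}, $h$ admits the convex-valued conservative field $\D_h(x, m) = [\D_f(x);\, \tau^{-1} \D_\phi(m)]$. Given any solution $\gamma$ of \eqref{Eq_DI_ST}, the definition of $\ca{U}$ in \eqref{Eq_Mapping_ST} provides, for a.e.\ $t \geq 0$, measurable selections $d(t) \in \D_f(x(t))$ and $v(t) \in \D_\phi(m(t))$ with
\[\dot{x}(t) = -v(t) - \alpha d(t), \qquad \dot{m}(t) = -\tau m(t) + \tau d(t).\]
Choosing $(d(t),\, v(t)/\tau) \in \D_h(\gamma(t))$ and applying the chain rule for conservative fields (Theorem 1 of \cite{bolte2021conservative}) gives, for a.e.\ $t$,
\[\frac{d}{dt} h(\gamma(t)) = \inner{d(t), \dot{x}(t)} + \tfrac{1}{\tau}\inner{v(t), \dot{m}(t)} = -\alpha \norm{d(t)}^2 - \inner{v(t), m(t)},\]
since the cross terms $\pm \inner{d(t), v(t)}$ cancel exactly. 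Combined with $\alpha \geq 0$ and Assumption \ref{Assumption_framework}(1), which forces $\inner{v, m} > 0$ for every $v \in \D_\phi(m)$ whenever $m \neq 0$, this yields $\frac{d}{dt} h(\gamma(t)) \leq 0$ a.e. Integrating handles the second condition of Definition \ref{Defin_Lyapunov_function}: any trajectory starting inside $\ca{B}$ satisfies $h(\gamma(t)) \leq h(\gamma(0))$ for every $t \geq 0$.

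For the strict-decrease condition, assume $\gamma(0) \notin \ca{B}$ and fix $t > 0$. If $m(0) \neq 0$, continuity gives $m(s) \neq 0$ on some $(0, \epsilon)$; compactness and closed graph of $\D_\phi$ upgrade the pointwise positivity from Assumption \ref{Assumption_framework}(1) into a uniform lower bound $\inner{v(s), m(s)} \geq c > 0$ on this interval, since otherwise a limit selection would produce $v^* \in \D_\phi(m(0))$ with $\inner{v^*, m(0)} \leq 0$, contradicting Assumption \ref{Assumption_framework}(1). In the remaining case $m(0) = 0$ but $0 \notin \D_f(x(0))$, convexity and compactness of $\D_f(x(0))$ supply a separating unit vector $w$ with $\inner{w, d} \geq \beta > 0$ for all $d \in \D_f(x(0))$; the closed graph of $\D_f$ extends this uniformly to $\inner{w, d} \geq \beta/2$ on $\D_f(x)$ for $x$ near $x(0)$. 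Projecting $\dot{m}(s) = -\tau m(s) + \tau d(s)$ onto $w$ and using $m(0) = 0$ together with continuity of $s \mapsto \norm{m(s)}$ shows $\inner{w, m(s)} \geq c' s$ on some $(0, T_0)$, forcing $m(s) \neq 0$ there and reducing to the first case at any $s_0 \in (0, T_0)$. In both cases $\frac{d}{ds} h(\gamma(s)) < 0$ on a subset of $(0, t)$ of positive measure, and integration gives $h(\gamma(t)) < h(\gamma(0))$.

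The main obstacle I anticipate is the second case: one must merge the closed-graph and local-boundedness of $\D_f$ with absolute continuity of $(x(s), m(s))$ to control $d(s)$ uniformly along the trajectory, and then convert the separation estimate into a genuine linear lower bound on $\inner{w, m(s)}$ near $s = 0$. Once this bookkeeping is in place, the rest reduces to the cancellation identity in the first paragraph together with the definition of a Lyapunov function.
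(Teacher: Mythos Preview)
Your proposal is correct and follows essentially the same route as the paper's proof: the same cancellation identity reducing the derivative along a trajectory to $-\alpha\norm{d}^2 - \inner{v, m}$, the same two-case split for strict decrease, and the same separation argument via a unit vector $w$ when $m(0) = 0$ but $0 \notin \D_f(x(0))$. The only differences are cosmetic---the paper phrases the chain rule through the $\inf$ form of the path integral in Definition~\ref{Defin_conservative_field_path_int} rather than citing Theorem~1 of \cite{bolte2021conservative} (which is actually the a.e.\ differentiability result, Lemma~\ref{Le_Conservative_as_gradient}), and in the $m(0) \neq 0$ case it uses pointwise positivity of $\inner{v(s), m(s)}$ directly rather than your uniform lower bound, which is stronger than needed.
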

\begin{proof}
	For any trajectory $(x(s), m(s))$ of the differential inclusion \eqref{Eq_DI_ST}, there exists measurable mappings $l_{f}$ and $l_{\phi}$ such that, for almost every $s\geq 0$, we have $l_{f}(s) \in \D_f(x(s))$, $l_{\phi}(s) \in \partial \phi(m(s))$,  such that  
	\begin{equation*}
		(\dot{x}(s), \dot{m}(s)) \in 
		-\left[
		\begin{matrix}
			l_{\phi}(s) + \rho l_{f}(s)\\
			\tau m(s) - \tau l_{f}(s)\\
		\end{matrix}
		\right].
	\end{equation*}
	Therefore, for almost every $s \geq 0$, we have
	\begin{equation*}
		\begin{aligned}
			&\inner{(\dot{x}(s), \dot{m}(s)), \D_{h}(x(s), m(s))}\\
			={}& \inner{-l_{\phi}(s) - \rho l_{f}(s), \D_f(x(s)) }  -  \inner{  m(s) -  l_{f}(s), \partial\phi(m(s))}\\
			\ni{}& \inner{-l_{\phi}(s) - \rho l_{f}(s), l_f(s) } -   \inner{ m(s) -  l_{f}(s), l_{\phi}(s)}= -\rho \inner{l_{f}(s) , l_{f}(s) } -   \inner{ m(s) , l_{\phi}(s)}. \\
		\end{aligned}
	\end{equation*}
	Now, for any $t \geq 0$, it holds from Definition \ref{Defin_conservative_field_path_int} that 
	\begin{equation*}
		\begin{aligned}
			&h(x(t), m(t)) - h(x(0), m(0)) =  \int_{0}^t \inf_{d_s \in \D_{h}(x(s), m(s))} \inner{(\dot{x}(s), \dot{m}(s)), d_s  } \mathrm{d}s\\
			\leq{}& \int_{0}^t-\rho \inner{l_{f}(s) , l_{f}(s) } -   \inner{ m(s) , l_{\phi}(s)} \mathrm{d}s. 
		\end{aligned}
	\end{equation*}
	Then from Assumption \ref{Assumption_framework}(1),  we can conclude that for any $t_2\geq t_1 \geq 0$, it holds that 
	\begin{equation}
		\label{Eq_Prop_Lyapunov_0}
		h(x(t_1), m(t_1)) \geq h(x(t_2), m(t_2)). 
	\end{equation}
	
	For any  $(x(0), m(0)) \notin \A_h$, we have either $m(0)\neq 0$ or $0\notin \D_f(x(0)) $.  When $m(0)\neq 0$, from the continuity of $m(s)$, there exists $T>0$ such that $\norm{m(s) } > 0$ holds for any $s \in [0, T]$. 
	Also, since $l_{\phi}(s) \in \partial \phi( m(s) )$, Assumption \ref{Assumption_framework}(1) implies that $\inner{m(s) , l_{\phi}(s)} > 0$ for almost every $s \in [0, T]$. Therefore, together with \eqref{Eq_Prop_Lyapunov_0}, we can conclude that 
	\begin{equation*}
		\begin{aligned}
			&h(x(t), m(t)) - h(x(0), m(0)) \leq -\int_{0}^t \inner{ m(s), l_{\phi}(s)} \mathrm{d}s \leq    -\int_{0}^{\min\{t, T\}} \inner{ m(s), l_{\phi}(s)} \mathrm{d}s < 0. 
		\end{aligned}
	\end{equation*}
	
	On the other hand, when $m(0) = 0$ and $0 \notin \D_f(x(0))$, there exists $w \in \Rn$ and $\delta > 0$, such that $\norm{w} = 1$ and $\mathrm{dist}\left( 0, \inner{w, \D_{f}(x(0))} \right) \geq \delta$. Then from the outer-semicontinuity of $\D_{f}$, there exists $T > 0$ and a constant $\delta >0$ such that  for any $s \in [0, T]$, we have
	\begin{equation*}
		\mathrm{dist}\left( 0, \inner{w, \D_{f}(x(s))} \right) \geq \frac{\delta}{2} \quad \text{and} \quad \norm{m(s)} \leq \frac{\delta}{4}.
	\end{equation*}
	Therefore, it holds for any $\tilde{t} \in [0, T]$ that 
	\begin{equation*}
		\begin{aligned}
			&\norm{m(\tilde{t})} \geq |\inner{m(\tilde{t}), w}| \geq \int_{0}^{\tilde{t}} \inner{w, \dot{m}(s)} \mathrm{d}s \\
			={}& \int_{0}^{\tilde{t}} \inner{w, -\tau m(s) + \tau l_f(s)} \mathrm{d}s
			\geq \tau \int_{0}^{\tilde{t}} \left(\mathrm{dist}\left( 0, \inner{w, \D_{f}(x(s))} \right) - \norm{m(s)} \right) \mathrm{d}s\\
			\geq{}& \frac{\tau \delta \tilde{t}}{4}. 
		\end{aligned}
	\end{equation*}
	Then for any $s \in (0, T)$, it holds that $\inner{m(s), l_{\phi}(s)} > 0$. Together with  \eqref{Eq_Prop_Lyapunov_0}, we achieve that
	\begin{equation*}
		\begin{aligned}
			h(x(t), m(t)) - h(x(0), m(0)) \leq -\int_{0}^t \inner{m(s), l_{\phi}(s)} \mathrm{d}s
			\leq -\int_{0}^{\min\{t, T\}} \inner{m(s), l_{\phi}(s)} \mathrm{d}s < 0. 
		\end{aligned}
	\end{equation*}
	As a result, $h$ is a Lyapunov function for the differential inclusion with $\A_h$ as its stable set. Hence we complete the proof. 
\end{proof}

\subsection{Global stability}

In this subsection, based on the results in Theorem \ref{Theo_stable_reshuffling_convergence} and Theorem \ref{Theo_stable_stochastic_main}, we investigate the global stability of the scheme \eqref{Eq_Framework}.

We first consider the cases where the indexes are drawn by reshuffling. Following Assumption \ref{Assumption_stable_reshuffling}, we stipulate the following assumptions.
\begin{assumpt}
	\label{Assumption_reshuffling}
	\begin{enumerate}
		\item The sequence of stepsizes $\{\eta_k\}$ is positive, $\sum_{k = 0}^{\infty} \eta_k = \infty$. Moreover, for any $j_1, j_2 \in \bb{N}_+$,  $\eta_{j_1} = \eta_{j_2}$ holds  whenever $\lfloor \frac{k_1}{N} \rfloor = \lfloor \frac{k_2}{N} \rfloor$. 
		\item The sequence of indexes $\{i_k\}$ is chosen from $[N]$ by reshuffling. That is, $\{i_k: jN\leq k < (j+1)N\} = [N]$ holds for any $j \in \bb{N}_+$. 
	\end{enumerate}
\end{assumpt}

The following theorem shows that with sufficiently small stepsizes $\{\eta_k\}$, we can prove the convergence for the framework \eqref{Eq_Framework} without any assumption on the uniform boundedness of $\{(\xk, \mk)\}$. 
\begin{theo}
	\label{Theo_convergence_main}
	Suppose Assumption \ref{Assumption_f}, Assumption \ref{Assumption_framework}, and Assumption \ref{Assumption_reshuffling} hold. Then for any $\varepsilon > 0$, there exists $\alpha > 0$ (depending on $\varepsilon$) and $\alpha_{ub} > 0$ (independent of $\varepsilon$), such that for any $(\alpha_{ub}, \alpha)$-asymptotically-upper-bounded sequence $\{\eta_k\}$, we have 
	\begin{equation}
		\limsup_{k\to \infty} \mathrm{dist}\left( \xk, \{x \in \Rn: 0\in \D_f(x)\}  \right) \leq \varepsilon, \quad \limsup_{k\to \infty} \mathrm{dist}\left( f(\xk), \{f(x): 0\in \D_f(x)\}  \right) \leq \varepsilon. 
	\end{equation}
\end{theo}
\begin{proof}
	We first verify the validity of Assumption \ref{Assumption_f_definable} and Assumption \ref{Assumption_stable_reshuffling}, with $\caH = \ca{G}$, $\{\ca{U}_i\} = \{\ca{G}_i\}$,  $\Psi = h$ and $\A = \A_h$. Proposition \ref{Prop_lyapunov_regularity} and Proposition \ref{Prop_Lyapunov} illustrate that the differential inclusion \eqref{Eq_DI_ST} admits $h$ as its definable coercive Lyapunov function with stable set $\A_h:= \{(x, m) \in \Rn \times \Rn: 0 \in \D_f(x), m = 0\}$. This verifies the validity of Assumption \ref{Assumption_f_definable}(2) with $\Psi = h$.  Assumption \ref{Assumption_f_definable}(3) directly follows from Proposition \ref{Prop_lyapunov_regularity} and the definability of $f$. 
	
	Moreover, the locally boundedness of $\{\ca{G}_i: 1\leq i\leq N\}$ illustrates that the update scheme \eqref{Eq_Framework} can be expressed as 
	\begin{equation}
		(\xkp, \mkp) \in \ca{G}_{i_k}^{\eta_k\tilde{p}(\xk, \mk)} (\xk, \mk),
	\end{equation}
	with $\tilde{p}(x, m) := \tau \left(\norm{m} + \sup_{1\leq i\leq N}\norm{\D_{f_i}(x)}\right)$. Then the local boundedness of $\{\D_{f_i}\}$ leads to the local boundedness of $\tilde{p}$, which verifies Assumption \ref{Assumption_stable_reshuffling}(2). Moreover,  Assumption \ref{Assumption_stable_reshuffling}(1) directly follows from Assumption \ref{Assumption_framework}(2), while Assumption \ref{Assumption_stable_reshuffling}(3) holds from the expression of $\ca{G}$.

	Let $q_{\tau}: \bb{R}_+ \to \bb{R}_+$ be defined as $q_{\tau}(\varepsilon) := \inf\{ s \geq 0: \phi(m) \leq \tau \varepsilon, \forall\, m\in \bb{B}_s(0) \}$. 
	By choosing $\X_0 = \X$ ($\X$ is defined in Assumption \ref{Assumption_framework}(2)) in Theorem \ref{Theo_stable_reshuffling_convergence}, for any $\varepsilon > 0$, there exists $\alpha > 0$ (depending on $\varepsilon$) and $\alpha_{ub} > 0$ (independent of $\varepsilon$), such that for any $(\alpha_{ub}, \alpha)$-asymptotically-upper-bounded sequence $\{\eta_k\}$, we have 
	\begin{equation*}
		\begin{aligned}
			\limsup_{k\to \infty} \mathrm{dist}\left( (\xk, \mk), \A_h  \right) \leq \min\left\{q_{\tau}\left(\frac{\varepsilon}{4}\right), \frac{\varepsilon}{4}\right\}, \limsup_{k\to \infty} \mathrm{dist}\left( h(\xk, \mk), \{h(x, m): (x, m) \in \A_h \}  \right) \leq  \frac{\varepsilon}{4}. 
		\end{aligned}
	\end{equation*}
	From the expression of $\A_h$ in Proposition \ref{Prop_Lyapunov}, simple norm manipulations implies that $\mathrm{dist}\left( (x, m), \A_h  \right) \geq \frac{1}{2} \left(\mathrm{dist}\left( x, \{x \in \Rn: 0\in \D_f(x)\}  \right) + \norm{m} \right)$. Hence $\limsup_{k\to \infty} \mathrm{dist}\left( \xk, \{x \in \Rn: 0\in \D_f(x)\}  \right) \leq \varepsilon$.  
	
	In addition, notice that $|h(x, m) - h(\tilde{x}, 0)| \geq |f(x) - f(\tilde{x})| - \frac{1}{\tau}|\phi(m)|$ holds for any $x,m \in \Rn$ and $\tilde{x} \in \{\tilde{x} \in \Rn: 0\in \D_f(\tilde{x})\}$. Thus we have 
	\begin{equation*}
		\begin{aligned}
			&\limsup_{k\to \infty}\mathrm{dist}\left( f(\xk), \{f(x): 0 \in \D_f(x) \}  \right)  = \limsup_{k\to \infty}\mathrm{dist}\left( h(\xk, 0), \{h(x, m): (x, m) \in \A_h \}  \right)\\
			\leq{}& \limsup_{k\to \infty}\mathrm{dist}\left( h(\xk,\mk), \{h(x, m): (x, m) \in \A_h \}  \right) + \frac{1}{\tau}\limsup_{k\to \infty} \phi(\mk)\\
			\leq{}& \frac{\varepsilon}{4} + \frac{\varepsilon}{4} \leq \varepsilon. 
		\end{aligned}
	\end{equation*}
	This completes the proof.

\end{proof}

As a direct corollary of Theorem \ref{Theo_convergence_main}, we have the following theorem showing the convergence properties of \eqref{Eq_Framework} with diminishing stepsizes. We omit its proof for simplicity. 
\begin{theo}
	\label{Theo_convergence_diminishing}
	Suppose Assumption \ref{Assumption_f} and Assumption \ref{Assumption_framework} hold, and $\lim_{k\to \infty} \eta_k = 0$. Then there exists $\alpha_{ub} > 0$ such that whenever the sequence $\{\eta_k\}$ satisfies $\sup_{k\geq 0} \eta_k \leq \alpha_{ub}$, any cluster point of $\{\xk\}$ lies in $\{x \in \Rn: 0\in \D_f\}$ and the sequence of function values $\{f(\xk)\}$ converges. 
\end{theo}

Based on the results from Theorem \ref{Theo_stable_stochastic_main},  we next establish the global stability of the scheme \eqref{Eq_Framework} when the indexes are drawn by with-replacement sampling. Consequently, we make the following assumptions on the stepsizes $\{\eta_k\}$ and the with-replacement sampling strategy.
\begin{assumpt}
	\label{Assumption_fully_random}
	\begin{enumerate}
		\item The sequence of indexes $\{i_k\}$ is chosen from $[N]$ uniformly and independently. 
		\item Let $\{\hat{\eta}_k\}$ be a prefixed sequence such that $\sum_{k = 0}^{\infty} \hat{\eta}_k = \infty$, $\lim_{k\to \infty} \hat{\eta}_k \log(k) = 0$, and for any scalar $c > 0$, we set $\eta_k = c\hat{\eta}_k$ for all $k\geq 0$. 
	\end{enumerate}
\end{assumpt}

The following theorem demonstrates that, given a sufficiently small scaling parameter $c$, the sequence of iterates ${\xk}$ remains uniformly bounded with high probability, consequently converging to the stationary points of $f$. The detailed proof of Theorem \ref{Theo_convergence_main_sto} directly follows Theorem \ref{Theo_stable_stochastic_main}, hence is omitted for simplicity. 
\begin{theo}
	\label{Theo_convergence_main_sto}
	Suppose Assumption \ref{Assumption_f}, Assumption \ref{Assumption_framework} and Assumption \ref{Assumption_fully_random} hold. Then for any $\varepsilon \in (0,1)$, there exists $\alpha > 0$ such that for any $c \in (0, \alpha)$, with probability at least $1-\varepsilon$, any cluster point of $\{\xk\}$ lies in $\{x \in \Rn: 0 \in \D_f(x)\}$, and the sequence of function values $\{f(\xk)\}$ converges. 
\end{theo}

\subsection{Avoidance of spurious stationary points}

In this subsection, we aim to establish the avoidance of spurious stationary points for the scheme \eqref{Eq_Framework}. Our analysis is developed based on the techniques from \cite{bolte2021nonsmooth,bianchi2022convergence,xiao2023adam}.  We begin our analysis with the following definition of the concept of \textit{almost everywhere $\ca{C}^1$} set-valued mappings. 
\begin{defin}
	\label{Defin_AS_C1}
	A measurable mapping $q: \bb{R}^d \to \bb{R}^d$ is almost everywhere $\ca{C}^1$ if for almost every $z \in \bb{R}^d$, $q$ is locally continuously differentiable in 
	a neighborhood of $z$. 
	
	Moreover, a set-valued mapping $\ca{Q} : \bb{R}^d \rightrightarrows \bb{R}^d$ is almost everywhere $\ca{C}^1$ if there exists an almost everywhere $\ca{C}^1$ mapping  $q:\bb{R}^d \to \bb{R}^d$ such that for almost every $z \in \bb{R}^d$, $\ca{Q}(z) = \{q(z)\}$. 
\end{defin}

Based on Definition \ref{Defin_AS_C1}, we present the following proposition to demonstrate that for almost all stepsizes, the iterates $z_{k+1} \in z_k - cQ_k(z)$ can avoid any zero-measure subset of $\Rn$ with almost every initial point. The proof of Proposition \ref{Prop_AS_multiples_step} directly follows from \cite[Proposition 3.10]{xiao2023adam}, hence is omitted for simplicity. 
\begin{prop}
	\label{Prop_AS_multiples_step}
	For any sequence of locally bounded set-valued mappings $\{\ca{Q}_k\}$ that are almost everywhere $\ca{C}^1$, there exists a full-measure subset $\ca{S}\subseteq \bb{R}_+$ such that for any $c \in \ca{S}$ and any zero-measure subset $\ca{Y}$ of $\Rn$, there exists a full-measure subset $\ca{Z}$ of $\Rn$ such that for any sequence $\{z_k\}$ generated by the update scheme
	\begin{equation}
		z_{k+1} \in z_k - cQ_k(z),\qquad z_0 \in \ca{Z},
	\end{equation}
	it holds that $\{z_k\} \notin \ca{Y}$. 
\end{prop}

Next, we present Theorem \ref{Theo_AS_avoid} to illustrate that with almost every initial point and stepsize, the sequence $\{\xk\}$ generated by \eqref{Eq_Framework} can stabilize around the Clarke stationary points, regardless of the chosen conservative field. 
\begin{theo}
	\label{Theo_AS_avoid}
	For the framework \eqref{Eq_Framework}, suppose  
	\begin{enumerate}
		\item Assumption \ref{Assumption_f} and Assumption \ref{Assumption_framework} hold;
		\item let $\{\hat{\eta}_k\}$ be a prefixed sequence such that $\sum_{k = 0}^{\infty} \hat{\eta}_k = \infty$, and for any scalar $c > 0$, we set $\eta_k = c\hat{\eta}_k$ for all $k\geq 0$. 
	\end{enumerate}
	Then there exists $\alpha_{c} >0$, a full-measure subset $\ca{S}$ of $(0, \alpha_c)$ and a full-measure subset $\ca{K}$ of $\X$, such that for any $c \in \ca{S}$ and $(x_0, m_0) \in \ca{K}$, the sequence $\{\xk\}$ generated by \eqref{Eq_Framework} satisfies 
	\begin{equation}
		\limsup_{k\to \infty} \mathrm{dist}\left( \xk, \{x \in \Rn: 0\in \partial f(x)\}  \right) \leq \varepsilon, \quad \limsup_{k\to \infty} \mathrm{dist}\left( f(\xk), \{f(x): 0\in \partial_f(x)\}  \right) \leq \varepsilon. 
	\end{equation}
\end{theo}

\begin{proof}
	Let the set-valued mappings $\{\ca{U}_{1,i}\}$ and $\{\ca{U}_{2,i}\}$ be defined as 
	\begin{equation}
		\begin{aligned}
			\ca{U}_{1,i}(x, m) :={}& (0, \tau m - \tau \D_{f_i}(x))\\
			\ca{U}_{2,i}(x, m):={}& (\partial \phi(m ) + \rho\D_{f_i}(x), 0).
		\end{aligned}
	\end{equation}
	Notice that $(f,\D_f)$ has a $\ca{C}^r$ variational stratification, hence the mappings $\{\ca{U}_{1,i}\}$ and $\{\ca{U}_{2,i}\}$ are almost everywhere $\ca{C}^1$ over $\Rn$. Furthermore, let 
	\begin{equation}
		\ca{Y}:= \left\{(x, m) \in \Rn \times \Rn: \D_{f_i}(x) \neq \{\nabla f_i(x)\}, \text{ for some } i\in [N]  \right\}.
	\end{equation}
	From \cite{bolte2021conservative}, $\ca{Y}$ is a zero-measure subset of $\Rn \times \Rn$. 
	
	Then  the sequence $\{(\xk, \mk)\}$ generated by \eqref{Eq_Framework} satisfies
	\begin{equation}
		\left\{
		\begin{aligned}
			\left(x_{k+\frac{1}{2}}, m_{k + \frac{1}{2}}  \right) \in{}& (\xk, \mk) - c \hat{\eta}_k\ca{U}_{1, i_k}(\xk, \mk),\\
			\left(\xkp, \mkp  \right) \in{}&\left(x_{k+\frac{1}{2}}, m_{k + \frac{1}{2}}  \right) -  c \hat{\eta}_k\ca{U}_{2, i_k}\left(x_{k+\frac{1}{2}}, m_{k + \frac{1}{2}}  \right).\\
		\end{aligned}
		\right.
	\end{equation}
	By choosing $\{\ca{Q}_k\} = \{\hat{\eta}_k\ca{U}_{1, i_k}, \hat{\eta}_k\ca{U}_{2, i_k}: k\geq 0\}$ in Proposition \ref{Prop_AS_multiples_step}, there exists a full-measure subset $\hat{\ca{S}} \subseteq \bb{R}_+$, such that for any $c \in \hat{\ca{S}}$, there exists a full-measure subset $\hat{\ca{K}}$ of $\Rn \times \Rn$ with the property that, whenever we choose $(x_0, m_0) \in \hat{\ca{K}}$, any $\{(\xk, \mk)\}$ generated by \eqref{Eq_Framework} satisfies $\{(\xk, \mk)\} \cap \ca{Y} = \emptyset$. For these sequence $(\xk, \mk)$, the update scheme in \eqref{Eq_Framework} can be reformulated as 
	\begin{equation}
		\left\{
		\begin{aligned}
			g_k ={}& \nabla {f_{i_k}}(\xk), \\
			\mkp ={}& \mk + \tau \eta_k( g_k - \mk),\\
			\xkp \in{}& \xk - \eta_k ( \partial \phi(\mkp) +\rho g_k).
		\end{aligned}
		\right.
	\end{equation}
	Therefore, by applying Theorem \ref{Theo_convergence_main} with $\D_f = \partial f$, we can conclude that for any $\varepsilon > 0$, we can choose $\alpha >0$ (depending on $\varepsilon$) and $\alpha_{ub}>0$ (independent of $\varepsilon$), such that whenever $c \in \hat{\ca{S}}$ and $\{c \hat{\eta}_k\}$ is $(\alpha_{ub}, \alpha)$-asymptotically-upper-bounded, any sequence $\{\xk\}$ with $(x_0, m_0) \in \hat{\ca{K}} \cap \X$ satisfies 
	\begin{equation}
		\limsup_{k\to \infty} \mathrm{dist}\left( \xk, \{x \in \Rn: 0\in \partial f(x)\}  \right) \leq \varepsilon, \quad \limsup_{k\to \infty} \mathrm{dist}\left( f(\xk), \{f(x): 0\in \partial f(x)\}  \right) \leq \varepsilon. 
	\end{equation}
	Then by choosing $ \alpha_c = \min\left\{\frac{\alpha_{ub} }{\sup_{k\geq 0} \eta_k}, \frac{\alpha}{1+\limsup_{k\to \infty} \eta_k}\right\}$, $\ca{S} = \hat{\ca{S}} \cap (0,\alpha_c)$ and $\ca{K} = \hat{\ca{K}} \cap \X$, we complete the proof. 
\end{proof}

Similar to Theorem \ref{Theo_convergence_diminishing}, we present the following theorem to show the convergence of \eqref{Eq_Framework} with random initialization and diminishing stepsizes. The proof of Theorem \ref{Theo_AS_avoid_diminishing} directly follows from Theorem \ref{Theo_AS_avoid} and \cite[Theorem 3.11]{xiao2023adam}, hence is omitted for simplicity. 
\begin{theo}
	\label{Theo_AS_avoid_diminishing}
	Suppose Assumption \ref{Assumption_f}, Assumption \ref{Assumption_framework} and Assumption \ref{Assumption_fully_random} hold. Then for any $\varepsilon \in (0,1)$, there exists $\alpha_{c} >0$, a full-measure subset $\ca{S}$ of $(0, \alpha_c)$ and a full-measure subset $\ca{K}$ of $\X$, such that for any $c \in \ca{S}$ and $(x_0, m_0) \in \ca{K}$, with probability at least $1-\varepsilon$, any cluster point of $\{\xk\}$ lies in $\{x \in \Rn:  0\in \partial f(x)\}$ and the sequence $\{f(\xk)\}$ converges.
\end{theo}

\begin{rmk}
    It is worth mentioning that by selecting appropriate auxiliary function $\phi$, the framework \eqref{Eq_Framework} can be specialized to several well-known SGD-type methods, including heavy-ball SGD, Lion, SignSGD, and normalized SGD. More importantly, following the results on the global stability of the framework \eqref{Eq_Framework},  our developed SGD-type methods directly enjoy convergence guarantees in the training of nonsmooth neural networks. A detailed discussion of these SGD-type methods is provided in Appendix \ref{Section_developing_GSGD} for interested readers. 

    Furthermore, to demonstrate the numerical potential of our proposed framework, we show that the framework \eqref{Eq_Framework} could yield efficient SGD-type methods. We set $\phi(m) = \norm{m}_1$ in \eqref{Eq_Framework} to formulate the Sign-Map Regularization Momentum SGD (SRSGD), which is a variant of the SignSGD method. Then our established results provide guarantees for its convergence in the training of nonsmooth neural networks. As illustrated in the preliminary numerical experiments in Appendix \ref{Section_numerical}, SRSGD exhibits comparable performance to the state-of-the-art methods, such as Lion \cite{chen2023symbolic}, Adam \cite{kingma2014adam}, and AdamW \cite{loshchilov2017decoupled}.  
\end{rmk}

\section{Convergence Properties of ADAM-family Methods}
\label{Section_ADAM}
In this section, we focus on the convergence properties of the framework for ADAM-family methods with the following assumptions on \eqref{Eq_Framework_ADAM}. 
\begin{assumpt}
    \label{Assumption_framework_ADAM}
    \begin{enumerate}
        \item There exists a compact subset $\X \subseteq \Rn \times \Rn \times \Rn$ such that $(x_0, m_0, v_0) \in \X$. 
        \item The parameters $\varepsilon_0$, $\tau_1$ and $\tau_2$ are positive and satisfy $4\tau_1 \geq \tau_2$. 
        \item The set-valued mapping $\ca{V}: \Rn \times \Rn  \rightrightarrows \Rn_+$ is convex compact valued, and graph-closed.
    \end{enumerate}
\end{assumpt}

As demonstrated in \cite{ding2023adam}, \eqref{Eq_Framework_ADAM} corresponds to the following differential inclusion,
\begin{equation}
    \label{Eq_DI_ADAM}
    \left( \frac{\mathrm{d}\bar{x}}{\mathrm{d}t}, \frac{\mathrm{d}\bar{m}}{\mathrm{d}t}, \frac{\mathrm{d}\bar{v}}{\mathrm{d}t}  \right) \in -\W(\bar{x}, \bar{m}, \bar{v}), 
\end{equation}
where 
\begin{equation}
	\label{Eq_mapping_G}
	\W(x, m, v) := 
	 \left\{  \left[\begin{matrix}
		(\ca{P}_{+}(v) + \varepsilon_0 )^{-\frac{1}{2}} \odot \left(   m + \rho d \right)\\
		\tau_1 m - \tau_1 d\\
		\tau_2 v - \tau_2  u\\
	\end{matrix}\right]  : d \in \D_f(x), ~u \in \ca{V}(x,m)
        \right\}.
\end{equation}
As shown in \cite[Proposition 4.6]{ding2023adam}, the differential inclusion \eqref{Eq_DI_ADAM} corresponds to the Lyapunov function 
\begin{equation}
    \label{Eq_Lyapunov_func_ADAM}
    h(x,m,v) = f(x) + \frac{1}{2\tau_1} \inner{m, (\ca{P}_+(v)  + \varepsilon_0 )^{-\frac{1}{2}} \odot m},
\end{equation}
with stable set $\A = \{(x, m, v) \in \Rn \times \Rn \times \Rn: 0 \in \D_f(x),~ m = 0\}$. However, the Lyapunov function in \eqref{Eq_Lyapunov_func_ADAM} is {\it not coercive} over $\Rn \times \Rn \times \Rn$, hence the results in Theorem \ref{Theo_abstract_stability} and Theorem \ref{Theo_abstract_stability_convergence_sequence} cannot be directly applied to establish the global stability of \eqref{Eq_Framework_ADAM}.

\subsection{Global stability}

In this subsection, we establish the global stability for \eqref{Eq_Framework_ADAM}. We begin our proof by introducing an auxiliary update scheme to \eqref{Eq_Framework_ADAM} parameterized by $K > 0$, which corresponds to a coercive Lyapunov function for all positive $K$. Then from the great flexibility of the choices of $\caH$ in \eqref{Eq_stable_iterate}, we establish the global stability of our introduced auxiliary scheme. Finally, we establish the global stability for \eqref{Eq_Framework_ADAM} by showing that the auxiliary scheme coincides with \eqref{Eq_Framework_ADAM} for a sufficiently large $K$.

For any $K\geq 0$, let $\mathbbm{I}_{K}: \Rn \to \{0, 1\}$ denote the indication function of the subset $\{x \in \Rn: \norm{x} \leq K\}$. Then we consider the following auxiliary update scheme parameterized by $K>0$, 
\begin{equation}
    \label{Eq_Framework_ADAM_auxiliary}
	\tag{ADM-$K$}
	\left\{
	\begin{aligned}
		g_k \in{}& \D_{f_{i_k}}(\xk), \\
		\mkp ={}& \mk + \tau_1 \eta_k( g_k - \mk),\\
            \vkp \in{}& \vk + \tau_2 \eta_k( \ca{V}(\xk, \mkp) \cdot \mathbbm{I}_{K}(\vk) - \vk),\\
		\xkp \in{}& \xk - \eta_k  (\ca{P}_{+}(\vkp) + \varepsilon_0)^{-\frac{1}{2}} \odot (\mkp + \rho g_k).
	\end{aligned}
	\right.
\end{equation}
Let the set-valued mapping $\W_K: \Rn \times \Rn \times \Rn  \rightrightarrows \Rn \times \Rn \times \Rn$ be defined as 
\begin{equation}
	\label{Eq_mapping_WK}
	\W_K(x, m, v) := 
	 \left\{  \left[\begin{matrix}
		(\ca{P}_{+}(v) + \varepsilon_0 )^{-\frac{1}{2}} \odot \left(   m + \rho d \right)\\
		\tau_1 m - \tau_1 d\\
		\tau_2 v - \tau_2  u\\
	\end{matrix}\right]  : d \in \D_f(x), ~u \in \ca{V}(x, m)\cdot \mathbbm{I}_{K}(v)
        \right\}.
\end{equation}
Moreover, for any $i \in [N]$, we define the set-valued mapping $\W_{K, i}: \Rn \times \Rn \times \Rn \rightrightarrows \Rn \times \Rn \times \Rn$ as 
    \begin{equation}
	\W_{K, i}(x, m, v) := 
	 \left\{  \left[\begin{matrix}
		(\ca{P}_{+}(v) + \varepsilon_0 )^{-\frac{1}{2}} \odot \left(   m + \rho d \right)\\
		\tau_1 m - \tau_1 d\\
		\tau_2 v - \tau_2  u\\
	\end{matrix}\right]  : d \in \D_{f_i}(x), ~u \in \ca{V}(x, m)\cdot \mathbbm{I}_{K}(v)
        \right\}.
\end{equation}
Then it directly follows from Lemma \ref{Le_closed_graph_2} that $\{\W_{K,i}\}$ are convex-valued locally bounded and graph-closed. 

Now we consider the differential inclusion 
\begin{equation}
    \label{Eq_DI_ADAM_WK}
    \left( \frac{\mathrm{d}\bar{x}}{\mathrm{d}t}, \frac{\mathrm{d}\bar{m}}{\mathrm{d}t}, \frac{\mathrm{d}\bar{v}}{\mathrm{d}t}  \right) \in -\W_K(\bar{x}, \bar{m}, \bar{v}).
\end{equation}
Let $h_K: \Rn \times \Rn \times \Rn \to \bb{R}$ be defined as 
\begin{equation}
    \label{Eq_Lyapunov_func_ADAM_WK}
    h_K(x,m,v):= f(x) + \frac{1}{2\tau_1} \inner{m, (\ca{P}_+(v)  + \varepsilon_0 )^{-\frac{1}{2}} \odot m} + K \max\{0, \norm{v} - K\}.
\end{equation}
The following lemma illustrates the conservative field of $h_K$. 
\begin{lem}
    \label{Le_basic_property_hK}
    Suppose Assumption \ref{Assumption_f} and Assumption \ref{Assumption_framework_ADAM} hold. Then for any $K> 0$, $h_K$ is coercive over $\Rn \times \Rn \times \Rn$ and admits 
    \begin{equation}
        \D_{h_K}(x, m, v) := 
        \left[
        \begin{smallmatrix}
            \D_f(x)\\
            \frac{1}{\tau_1}(\ca{P}_+(v)  + \varepsilon_0 )^{-\frac{1}{2}} \odot m\\
            -\frac{1}{4\tau_1}m(s)^{2} \odot (\ca{P}_+(v(s))+\varepsilon_0)^{-\frac{3}{2}} \odot \partial \ca{P}_+(v(s)) + K (1-\mathbbm{I}_{K}(v)) \cdot \mathrm{regu}(v)
        \end{smallmatrix}
        \right].
    \end{equation}  
    as its conservative field. 
\end{lem}

Next we present the following proposition to illustrate that the differential inclusion \eqref{Eq_DI_ADAM_WK} admits $h_K$ as its Lyapunov function with stable set $\A_K = \{(x, m, v) \in \Rn \times \Rn \times \Rn: 0 \in \D_f(x),~ m = 0, ~ \norm{v} \leq K\}$.
\begin{prop}
    \label{Prop_Lyapunov_ADAM_WK}
    Suppose Assumption \ref{Assumption_f} and Assumption \ref{Assumption_framework_ADAM} hold. Then for any $K > 0$, the differential inclusion \eqref{Eq_DI_ADAM_WK} admits $h_K$ as its Lyapunov function with stable set $\A_K$. 
\end{prop}
\begin{proof}
    For any $K > 0$ and any trajectory of the differential inclusion \eqref{Eq_DI_ADAM_WK}, there exists $l_f: \bb{R}_+ \to \Rn$ and $l_u: \bb{R}_+ \to \Rn$ such that $l_f(s) \in \D_f(x(s))$ and $l_u(s) \in \ca{V}(x(s), m(s)) \cdot  \mathbbm{I}_{K}(v(s))$ for almost every $s \geq 0$, and 
    \begin{equation}
        \left( \dot{x}(s), \dot{m}(s), \dot{v}(s) \right) 
        = 
        \left[\begin{matrix}
            -(\ca{P}_+(v(s)) + \varepsilon_0 )^{-\frac{1}{2}} \odot  (m(s) + \rho l_f(s)) \\
            -\tau_1 m(s) + \tau_1 l_f(s)\\
            -\tau_2 v(s) + \tau_2  l_u(s)\\
        \end{matrix}\right].
    \end{equation}
    From the formulation of $h_K$ and Lemma \ref{Le_basic_property_hK}, it holds that 
    \begin{equation}
        \begin{aligned}
            &\inner{\D_{h_K}(x(s), m(s), v(s)),\left( \dot{x}(s), \dot{m}(s), \dot{v}(s) \right)  }\\
            ={}& -\inner{\D_f(x(s)), (\ca{P}_+(v(s)) + \varepsilon_0 )^{-\frac{1}{2}} \odot (m(s) + \rho l_f(s)) } \\
            & + \inner{(\ca{P}_+(v(s)) + \varepsilon_0 )^{-\frac{1}{2}} \odot m(s), -m(s) +  l_f(s)}- \tau_2 K (1-\mathbbm{I}_{K}(v(s))) \norm{v(s)}\\
            & + \frac{\tau_2}{4\tau_1} \inner{ m(s)^{2} \odot (\ca{P}_+(v(s))+\varepsilon_0)^{-\frac{3}{2}} \odot \partial \ca{P}_+(v(s)) , v(s) - l_u(s)  } \\
            \ni{}& -\inner{l_f(s), (\ca{P}_+(v(s)) + \varepsilon_0 )^{-\frac{1}{2}} \odot (m(s) + \rho l_f(s)) } \\
            & + \inner{(\ca{P}_+(v(s)) + \varepsilon_0 )^{-\frac{1}{2}} \odot m(s), -m(s) +  l_f(s)}- \tau_2 K (1-\mathbbm{I}_{K}(v(s))) \norm{v(s)}\\
            & + \frac{\tau_2}{4\tau_1} \inner{ m(s)^{2} \odot (\ca{P}_+(v(s))+\varepsilon_0)^{-\frac{3}{2}} \odot \partial \ca{P}_+(v(s)) , v(s) - l_u(s)  }\\
            \leq{}& - \inner{(\ca{P}_+(v(s)) + \varepsilon_0 )^{-\frac{1}{2}} \odot m(s), m(s)} - \tau_2 K (1-\mathbbm{I}_{K}(v(s))) \norm{v(s)}\\
            & +  \frac{\tau_2}{4\tau_1}\inner{ m(s) \odot (\ca{P}_+(v(s))+\varepsilon_0)^{-\frac{3}{2}} \odot \ca{P}_+(v(s)),  m(s)  } \\
            \leq{}& - \frac{\tau_2 \varepsilon_0}{4\tau_1} \inner{ m(s) \odot (\ca{P}_+(v(s))+\varepsilon_0)^{-\frac{3}{2}}, m(s)  } - \tau_2 K (1-\mathbbm{I}_{K}(v(s)))  \norm{v(s)}. 
        \end{aligned}
    \end{equation}
    Here the first inequality follows from the fact that $l_u(s) \geq 0$ and $\partial\ca{P}_+(v) \odot v = \ca{P}_+(v)$. Therefore, we can conclude that for any initial point $(x(0), m(0), v(0)) \in \Rn \times \Rn \times \Rn$, it holds for any $t \geq 0$ that  
    \begin{equation}
        \label{Eq_Prop_Lyapunov_ADAM_WK_0}
        \begin{aligned}
            &h_K(x(t), m(t), v(t)) - h_K(x(0), m(0), v(0)) \\
            ={}&  \int_{0}^{t} {\inf_{l \in \D_{h_K}(x(s), m(s), v(s))}} \inner{l, (\dot{x}(s), \dot{m}(s), \dot{v}(s)) } \mathrm{d}s\\
            \leq{}&  - \int_{0}^{t}  \left( \frac{\tau_2\varepsilon_0}{4\tau_1} \inner{ m(s) \odot (\ca{P}_+(v(s))+\varepsilon_0)^{-\frac{3}{2}}, m(s)  } + \tau_2 K (1-\mathbbm{I}_{K}(v(s)))  \norm{v(s)} \right) \mathrm{d}s. 
        \end{aligned}
    \end{equation}
    As a result, we can conclude that for any trajectory of the differential inclusion \eqref{Eq_DI_ADAM_WK}, it holds for any $t > 0$ that $h_K(x(t), m(t), v(t)) \leq h_K(x(0), m(0), v(0))$. 

    Now consider the case when $(x(0), m(0), v(0)) \notin  \A_K$. Suppose there exists some $T > 0$ such that  
    \begin{equation}
        \label{Eq_Prop_Lyapunov_ADAM_WK_1}
        h_K(x(T), m(T), v(T)) =  h_K(x(0), m(0), v(0)). 
    \end{equation}  
    Then \eqref{Eq_Prop_Lyapunov_ADAM_WK_0} illustrates that $m(s) = 0$ and $(1-\mathbbm{I}_{K}(v(s))) \cdot \norm{v(s)} = 0$ holds for almost every $s \in [0, T]$. As a result, we have
    \begin{equation*}
        0 = \dot{m}(s) \in  -\tau_1 m(s) + \tau_1 \D_f(x(s)) = \tau_1 \D_f(x(s))
    \end{equation*}
    holds for almost every $s \in [0, T]$. Then with the facts that $(x(t), m(t), v(t))$ is absolutely continuous and $\D_f$ is graph-closed and locally bounded, we can conclude that 
    $0 \in \D_f(x(0))$, $\norm{m(0)} = 0$ and $\norm{v(0)} \leq K$, which contradicts to the fact that $(x(0), m(0), v(0)) \notin  \A_K$. As a result, we can conclude that for any $T>0$, whenever $(x(0), m(0), v(0)) \notin  \A_K$, it holds that 
    \begin{equation*}
        h_K(x(T), m(T), v(T)) <  h_K(x(0), m(0), v(0)). 
    \end{equation*}
    This completes the proof. 
\end{proof}

By directly applying Theorem \ref{Theo_abstract_stability}, we present the following proposition illustrating the global stability of $\{(\xk, \mk, \vk)\}$ generated by the auxiliary scheme \eqref{Eq_Framework_ADAM_auxiliary}. 
\begin{prop}
    \label{Prop_Stability_ADAM_WK}
    Suppose Assumption \ref{Assumption_f}, Assumption \ref{Assumption_framework_ADAM}, and Assumption \ref{Assumption_reshuffling} hold. Then there exists $\alpha_{ub} > 0$ and $K_{ub} >0$, such that for any $\alpha_{ub}$-upper-bounded sequence $\{\eta_k\}$ and any $K\geq K_{ub}$, the sequence $\{(\xk, \mk, \vk)\}$ generated by \eqref{Eq_Framework_ADAM_auxiliary} satisfies
    \begin{equation}
         h_K(\xk, \mk, \vk)\leq \tilde{r}, \quad \sup_{k\geq 0}\norm{\vk} < K_{ub}, 
    \end{equation}
    with $\tilde{r} = 8 \max\{\sup_{(x, m, v) \in \X \cup \A } h(x, m, v), 0\} + 1$. 
\end{prop}
\begin{proof}
    We begin our proof by verifying the validity of Assumption \ref{Assumption_f_definable} and Assumption \ref{Assumption_stable_reshuffling} with $\caH = \W_K$, $\{\ca{U}_i\} = \{\W_{K,i}\}$, $\Psi = h_K$ and $\A = \A_K$. Lemma \ref{Le_basic_property_hK} and Proposition \ref{Prop_Lyapunov_ADAM_WK} illustrate that the differential inclusion \eqref{Eq_DI_ADAM_WK} admits $h_K$ as its coercive Lyapunov function with stable set $\A_K$, which verifies Assumption \ref{Assumption_f_definable}(2). Moreover Assumption \ref{Assumption_f_definable}(3) directly follows from the definability of $f$. 

    Furthermore, from the local boundedness of $\{\W_{K,i}: i\in [N]\}$, the update scheme \eqref{Eq_Framework_ADAM_auxiliary} fits into the following scheme:
    \begin{equation}
        (\xkp, \mkp, \vkp) \in \W_{K, i_k}^{\eta_k \tilde{p}(\xk, \mk, \vk)}(\xk, \mk, \vk), 
    \end{equation}
    where 
    \begin{equation}
        \label{Eq_Prop_Stability_ADAM_WK_1}
        \tilde{p}(x, m, v) = \tau_1 \norm{\D_f(\xk) - m} + \tau_2 \norm{\ca{V}(x, m) - v}.
    \end{equation}
    It is easy to prove that $\tilde{p}$ is a locally bounded function over $\Rn \times \Rn \times \Rn$, hence verifying Assumption \ref{Assumption_stable_reshuffling}(2). In addition, Assumption \ref{Assumption_stable_reshuffling}(1)(3)(4) directly follows from Assumption \ref{Assumption_reshuffling} and the expression of $\W_K$ and $\{\W_{K, i}\}$. 

    Therefore, letting $K_{0} = \sup_{(x, m, v) \in \X} \norm{v}$, then from the expression of $h_K$, it holds for any $K\geq K_{0}$ that $\sup_{(x, m, v) \in \X} h_K(x, m, v) = \sup_{(x, m, v) \in \X} h(x, m, v)$. Then it follows from Proposition \ref{Prop_stable_reshuffling_UB} that there exists $\alpha_{ub} > 0$, such that for any $K > K_{0}$ and any $\alpha_{ub}$-upper-bounded sequence $\{\eta_k\}$ the sequence $\{(\xk, \mk, \vk)\}$ generated by \eqref{Eq_Framework_ADAM_auxiliary} satisfies  
    \begin{equation}
        \label{Eq_Prop_Stability_ADAM_WK_0}
        h(\xk, \mk, \vk) \leq  h_K(\xk, \mk, \vk)\leq \tilde{r}, 
    \end{equation}
    with $\tilde{r} = 8 \max\{\sup_{(x, m, v) \in \X \cup \A  } h(x, m, v), 0\} + 1$. 

    Let $M_{ub,x}= \sup_{f(x) \leq \tilde{r}}  \norm{x}$. From the coercivity of $f$ in Assumption \ref{Assumption_f}, we can conclude that $M_{ub,x}$ is finite. Together with \eqref{Eq_Prop_Stability_ADAM_WK_0}, it holds for any $K > K_{0}$ that $\sup_{k\geq 0}\norm{\xk} \leq M_{ub,x}$. Moreover, based on the definition of $M_{ub,x}$, we denote 
    \begin{equation}
        \label{Eq_Prop_Stability_ADAM_WK_2}
        M_{ub, m}:= \max\left\{ \sup_{(x, m, v) \in \X} \norm{m},  \sup_{ \norm{x} \leq M_{ub, x} }  \norm{\D_f(x)}\right\}. 
    \end{equation}
    Then for any $K_{ub} > \max\{K_{0}, 2\sup_{\norm{x}\leq M_{ub,x}, \norm{m} \leq M_{ub,m}} \norm{\ca{V}(x, m)}\}$, we can conclude from the update scheme \eqref{Eq_Framework_ADAM_auxiliary} that $\sup_{k\geq 0}\norm{\vk} < K_{ub}$ for all $K\geq  K_{ub}$. This completes the proof. 
\end{proof}

Furthermore, based on similar proof techniques as in Theorem \ref{Theo_stable_reshuffling_convergence}, we have the following theorem illustrating the convergence properties of \eqref{Eq_Framework_ADAM}. 
\begin{theo}
    \label{Theo_convergence_ADAM_reshuffling}
    Suppose Assumption \ref{Assumption_f}, Assumption \ref{Assumption_reshuffling}, and Assumption \ref{Assumption_framework_ADAM} hold. Then for any $\varepsilon > 0$, there exists $\alpha > 0$ (depending on $\varepsilon$) and $\alpha_{ub} > 0$ (independent of $\varepsilon$), such that for any $(\alpha_{ub}, \alpha)$-asymptotically-upper-bounded sequence $\{\eta_k\}$, the sequence $\{\xk\}$ generated by \eqref{Eq_Framework_ADAM} satisfies 
	\begin{equation}
		\limsup_{k\to \infty} \mathrm{dist}\left( \xk, \{x \in \Rn: 0\in \D_f(x)\}  \right) \leq \varepsilon, \quad \limsup_{k\to \infty} \mathrm{dist}\left( f(\xk), \{f(x): 0\in \D_f(x)\}  \right) \leq \varepsilon. 
	\end{equation}
\end{theo}
\begin{proof}
    We first analyze the convergence properties of the sequence $\{(\xk, \mk, \vk)\}$ generated by \eqref{Eq_Framework_ADAM_auxiliary} for any $K > 0$. 
    
    Employing the same proof techniques as those utilized in Proposition \ref{Prop_Stability_ADAM_WK}, we can establish the validity of Assumption \ref{Assumption_f_definable} and Assumption \ref{Assumption_stable_reshuffling} with $\caH = \W_K$, $\{\ca{U}_i\} = \{\W_{K,i}\}$, $\Psi = h_K$ and $\A = \A_K$. Furthermore, the update scheme \eqref{Eq_Framework_ADAM_auxiliary} fits into the following scheme:
    \begin{equation}
        (\xkp, \mkp, \vkp) \in \W_{K, i_k}^{\eta_k \tilde{p}(\xk, \mk, \vk)}(\xk, \mk, \vk), 
    \end{equation}
    where $\tilde{p}$ is defined as \eqref{Eq_Prop_Stability_ADAM_WK_1}. 
    
    As a result, from Proposition \ref{Prop_Stability_ADAM_WK}, we can conclude that there exists $\alpha_{0} > 0$ and $K_{0} > 0$ such that for any $\alpha_{0}$-upper-bounded sequence $\{\eta_k\}$ and any $K > K_0$, the sequence $\{(\xk, \mk, \vk)\}$ satisfies  
    \begin{equation}
        \label{Eq_Theo_convergence_ADAM_reshuffling_0}
        h_K(\xk, \mk, \vk)\leq \tilde{r}, \quad \sup_{k\geq 0}\norm{\vk} < K_{0},
    \end{equation}
    with $\tilde{r} = 8 \max\{\sup_{ (x, m, v) \in \X \cup \A } h(x, m, v), 0\} + 1$. 
    
    Moreover, by Theorem \ref{Theo_convergence_main}, there exists $\alpha > 0$ (depending on $\varepsilon$) and $\alpha_{ub} \in (0, \alpha_0)$ (independent of $\varepsilon$), such that for any $(\alpha_{ub}, \alpha)$-asymptotically-upper-bounded sequence $\{\eta_k\}$, we have 
    \begin{equation}
        \limsup_{k\to \infty} \mathrm{dist}\left( \xk, \{x \in \Rn: 0\in \D_f(x)\}  \right) \leq \varepsilon, \quad \limsup_{k\to \infty} \mathrm{dist}\left( f(\xk), \{f(x): 0\in \D_f(x)\}  \right) \leq \varepsilon. 
    \end{equation}

    Furthermore, let $K_{ub} > 0$ be defined in the same way as that in Proposition \ref{Prop_Stability_ADAM_WK}, then we can conclude from the update scheme \eqref{Eq_Framework_ADAM_auxiliary} that $\sup_{k\geq 0}\norm{\vk} < K_{ub}$ for all $K\geq K_{ub}$. Therefore, for any $K\geq K_{ub}$, the constraints $\norm{v} \leq K$ is inactive for \eqref{Eq_Framework_ADAM_auxiliary}. Therefore, the sequence $\{\xk, \mk, \vk\}$ exactly follows the update scheme in \eqref{Eq_Framework_ADAM}. This completes the proof. 
\end{proof}

The following theorem demonstrates that, given a sufficiently small scaling parameter $c$, the sequence of iterates $\{\xk\}$ remains uniformly bounded with high probability, consequently converging to the stationary points of $f$. By the same proof technique as 
in Theorem \ref{Theo_convergence_ADAM_reshuffling}, the proof of Theorem \ref{Theo_convergence_ADAM_WRS} directly follows Theorem \ref{Theo_stable_stochastic_main}, hence is omitted for simplicity. 
\begin{theo}
    \label{Theo_convergence_ADAM_WRS}
    Suppose Assumption \ref{Assumption_f}, Assumption \ref{Assumption_fully_random}, and Assumption \ref{Assumption_framework_ADAM} hold. Then for any $\varepsilon \in (0, 1)$, there exists $\alpha > 0$ such that for any $c \in (0, \alpha)$, with probability at least $1-\varepsilon$, any cluster point of $\{\xk\}$ in \eqref{Eq_Framework_ADAM} lies in $\{x \in \Rn: 0 \in \D_f(x)\}$, and the sequence of function values $\{f(\xk)\}$ converges. 
\end{theo}

\subsection{Avoidance of spurious stationary points}
In this subsection, we establish the avoidance of spurious stationary points for the scheme \eqref{Eq_Framework_ADAM} in the following two theorems. 
\begin{theo}
\label{Theo_AS_avoid_ADAM}
For the scheme \eqref{Eq_Framework_ADAM}, suppose  
\begin{enumerate}
    \item Assumption \ref{Assumption_f} and Assumption \ref{Assumption_framework_ADAM} hold;
    \item let $\{\hat{\eta}_k\}$ be a prefixed sequence such that $\sum_{k = 0}^{\infty} \hat{\eta}_k = \infty$, and for any scalar $c > 0$, we set $\eta_k = c\hat{\eta}_k$ for all $k\geq 0$;
    \item $\ca{V}$ is almost everywhere $\ca{C}^1$ over $\Rn \times \Rn$. That is, for almost every $(x, m) \in \Rn \times \Rn$, $\ca{V}$ is a singleton and continuously differentiable within a neighborhood of $(x, m)$. 
\end{enumerate}
Then there exists $\alpha_{c} >0$, a full-measure subset $\ca{S}$ of $(0, \alpha_c)$ and a full-measure subset $\ca{K}$ of $\X$, such that for any $c \in \ca{S}$ and $(x_0, m_0, v_0) \in \ca{K}$, the sequence $\{\xk\}$ generated by \eqref{Eq_Framework_ADAM} satisfies 
\begin{equation}
    \limsup_{k\to \infty} \mathrm{dist}\left( \xk, \{x \in \Rn: 0\in \partial f(x)\}  \right) \leq \varepsilon, \quad \limsup_{k\to \infty} \mathrm{dist}\left( f(\xk), \{f(x): 0\in \partial f(x)\}  \right) \leq \varepsilon. 
\end{equation}
\end{theo}
\begin{proof}
    Let the set-valued mappings $\{\ca{U}_{1,i}\}$, $\{\ca{U}_{2,i}\}$ and $\{\ca{U}_{3,i}\}$ be defined as, 
    \begin{equation*}
        \small
        \ca{U}_{1,i}(x, m, v) = 
        \left[\begin{matrix}
            0\\
            \tau_1 m - \tau_1  \D_{f_i}(x)\\
            0\\
        \end{matrix}\right], 
        \quad 
        \ca{U}_{2,i}(x, m, v) = 
        \left[\begin{matrix}
            0\\
            0\\
             \tau_2 (v - \ca{V}(x) ) 
            \\
        \end{matrix}\right],
    \end{equation*}
    and 
    \begin{equation*}
        \ca{U}_{3,i}(x, m, v) = 
        \left[\begin{matrix}
            (\ca{P}_{+}(v) + \varepsilon)^{-\frac{1}{2}} \odot (m + \rho \D_{f_i}(x))\\
            0\\
            0\\
        \end{matrix}\right].
    \end{equation*}
    Notice that $(f,\D_f)$ has a $\ca{C}^r$ variational stratification, hence the mappings $\{\ca{U}_{1,i}\}$, $\{\ca{U}_{2,i}\}$ and $\{\ca{U}_{3,i}\}$ are almost everywhere $\ca{C}^1$ over $\Rn$. Furthermore, let 
	\begin{equation}
		\ca{Y}:= \left\{(x, m, v) \in \Rn \times \Rn \times \Rn: \D_{f_i}(x) \neq \{\nabla f_i(x)\}, \text{ for some } i\in [N]  \right\}.
	\end{equation}
	From \cite{bolte2021conservative}, $\ca{Y}$ is a zero-measure subset of $\Rn \times \Rn \times \Rn$. Furthermore, the scheme \eqref{Eq_Framework_ADAM} can be expressed as 
 \begin{equation*}
     \begin{aligned}
         &\left(x_{k+\frac{1}{3}}, m_{k+\frac{1}{3}}, v_{k+\frac{1}{3}} \right) \in (\xk, \mk, \vk) - c\hat{\eta}_k \ca{U}_{1,i_k}(\xk, \mk, \vk),\\
         &\left(x_{k+\frac{2}{3}}, m_{k+\frac{2}{3}}, v_{k+\frac{2}{3}} \right) 
         \in (x_{k+\frac{1}{3}}, m_{k+\frac{1}{3}}, v_{k+\frac{1}{3}}) - c\hat{\eta}_k \ca{U}_{2,i_k}\left(x_{k+\frac{1}{3}}, m_{k+\frac{1}{3}}, v_{k+\frac{1}{3}}\right),\\
         &(\xkp, \mkp, \vkp) \in \left(x_{k+\frac{2}{3}}, m_{k+\frac{2}{3}}, v_{k+\frac{2}{3}}\right) - c\hat{\eta}_k \ca{U}_{3,i_k}\left(x_{k+\frac{2}{3}}, m_{k+\frac{2}{3}}, v_{k+\frac{2}{3}}\right). 
     \end{aligned} 
 \end{equation*}
 By choosing $\{\ca{Q}_k\} = \{\hat{\eta}_k\ca{U}_{1, i_k}, \hat{\eta}_k\ca{U}_{2, i_k}, \hat{\eta}_k\ca{U}_{3, i_k}: k\geq 0\}$ in Proposition \ref{Prop_AS_multiples_step}, there exists a full-measure subset $\hat{\ca{S}} \subseteq \bb{R}_+$, such that for any $c \in \hat{\ca{S}}$, there exists a full-measure subset $\hat{\ca{K}}$ of $\Rn \times \Rn \times \Rn$ with the property that, whenever we choose $(x_0, m_0, v_0) \in \hat{\ca{K}}$, any $\{(\xk, \mk, \vk)\}$ generated by \eqref{Eq_Framework} satisfies $\{(\xk, \mk, \vk)\} \cap \ca{Y} = \emptyset$. For such a sequence $\{(\xk, \mk, \vk)\}$, the update scheme in \eqref{Eq_Framework_ADAM} can be reformulated as 
	\begin{equation}
		\left\{
    	\begin{aligned}
    		g_k ={}& \nabla f_{i_k}(\xk), \\
    		\mkp ={}& \mk + \tau_1 \eta_k( g_k - \mk),\\
                \vkp \in{}& \vk + \tau_2 \eta_k( \ca{V}(\xk) - \vk),\\
    		\xkp \in{}& \xk - \eta_k  (\ca{P}_{+}(\vkp) + \varepsilon_0)^{-\frac{1}{2}} \odot (\mkp + \rho g_k).
    	\end{aligned}
    	\right.
	\end{equation}
    Therefore, by applying Theorem \ref{Theo_convergence_ADAM_reshuffling} with $\D_f = \partial f$, and choose $\alpha, \alpha_{ub}> 0$ as defined in Theorem \ref{Theo_convergence_ADAM_reshuffling}, we proceed with $\alpha_c = \min\left\{\frac{\alpha_{ub} }{\sup_{k\geq 0} \eta_k}, \frac{\alpha}{1+\limsup_{k\to +\infty} \eta_k}\right\}$, $\ca{S} = \hat{\ca{S}} \cap (0,\alpha_{ub})$ and $\ca{K} = \hat{\ca{K}} \cap \X$, hence completing the proof of this theorem. 
\end{proof}

Similarly, we present the following theorem to show the convergence of \eqref{Eq_Framework_ADAM} with random initialization and with-replacement sampling technique. The proof of Theorem \ref{Theo_AS_avoid_diminishing_ADAM} directly follows from Theorem \ref{Theo_AS_avoid} and Theorem \ref{Theo_convergence_ADAM_WRS}, hence is omitted for simplicity. 
\begin{theo}
	\label{Theo_AS_avoid_diminishing_ADAM}
	Suppose Assumption \ref{Assumption_f}, Assumption \ref{Assumption_fully_random}, and Assumption \ref{Assumption_framework_ADAM} hold. Then for any $\varepsilon \in (0,1)$, there exists $\alpha_{c} >0$, a full-measure subset $\ca{S}$ of $(0, \alpha_c)$ and a full-measure subset $\ca{K}$ of $\X$, such that for any $c \in \ca{S}$ and $(x_0, m_0, v_0) \in \ca{K}$, with probability at least $1-\varepsilon$, any cluster point of $\{\xk\}$ lies in $\{x \in \Rn:  0\in \partial f(x)\}$ and the sequence $\{f(\xk)\}$ converges.
\end{theo}

\begin{rmk}
    It is worth mentioning that by selecting specific set-valued mapping $\ca{V}$, the framework \eqref{Eq_Framework_ADAM} yields several well-known ADAM-family methods, including the ADAM, NADAM, and AdaBelief. From our established results on global stability, these ADAM-family methods inherently enjoy the convergence guarantees in the context of training nonsmooth neural networks. For readers interested in a more comprehensive discussion of these ADAM-family methods, detailed discussions are presented in Appendix \ref{Section_developing_ADAM}. 
\end{rmk}

\section{Conclusion}
In this paper, we introduce a general framework \eqref{Eq_stable_iterate} for stochastic subgradient methods in nonsmooth nonconvex optimization. Our proposed framework \eqref{Eq_stable_iterate} allows for flexible choices of the set-valued mapping $\caH$ and the evaluation noises $\{\xi_k\}$, thus encompassing a wide variety of stochastic subgradient methods.

We establish the global stability of our proposed framework under mild conditions, particularly the coercivity of the corresponding Lyapunov function. Specifically, for any given $\varepsilon > 0$, we demonstrate that by selecting sufficiently small (possibly non-diminishing) stepsizes and approximation parameters, coupled with sufficiently controlled evaluation noises, the iterates ${\xk}$ remain uniformly bounded and asymptotically stabilize within an $\varepsilon$-neighborhood of the stable set. These results provide theoretical convergence guarantees for a wide range of stochastic subgradient methods with potentially non-diminishing stepsizes, which improves existing results in \cite{josz2023lyapunov,bolte2024inexact}.

To demonstrate the global stability of existing stochastic subgradient methods, we introduce the scheme \eqref{Eq_Framework} for developing SGD-type methods, including variants of heavy-ball SGD, SignSGD, ClipSGD, and normalized SGD. We show that \eqref{Eq_Framework} fits within our proposed framework \eqref{Eq_stable_iterate}, where the coercivity of its Lyapunov function directly follows from the coercivity of the objective function $f$. Therefore, from our established results on the global stability of \eqref{Eq_stable_iterate}, we establish the global convergence, global stability, and avoidance of spurious stationary points for the scheme \eqref{Eq_Framework}.

Furthermore, we extend our analysis to the scheme \eqref{Eq_Framework_ADAM}, which encompasses variants of ADAM, AdaBelief, and NADAM. Although the Lyapunov function corresponding to \eqref{Eq_Framework_ADAM} is non-coercive even with a coercive objective function $f$, we develop an improved analysis by introducing an auxiliary update scheme that admits a coercive Lyapunov function. By establishing the equivalence between this auxiliary scheme and \eqref{Eq_Framework_ADAM}, we demonstrate the global convergence, stability, and avoidance of spurious stationary points for \eqref{Eq_Framework_ADAM}. These examples further demonstrate the promising potential and broad applicability of our proposed framework \eqref{Eq_stable_iterate} and our established results on its global stability.

\appendix

\section{Proofs of the main results}

\subsection{Proofs for Section \ref{Subsection_31}}

\subsubsection{Proof of Proposition \ref{Prop_abstract_descrease_lyapunov}}
\label{Subsection_proof_00}

\begin{proof}
	We prove this proposition by contradiction. That is, we assume that there exists $\varepsilon > 0$ and $T> 0$, such that for any $\iota > 0$, there exists a trajectory $\bar{x}$ of the differential inclusion \eqref{Eq_stable_DI}, that satisfies $\bar{x}(0) \in \X_0$, $\mathrm{dist}\left( \bar{x}(0), \A  \right) \geq  \varepsilon$,  and $\Psi(\bar{x}(T)) > \Psi(\bar{x}(0)) - \iota$. Then for any $i \in \bb{N}_+$, there exists a trajectory $\bar{x}^{(i)}: [0, T] \to \bb{R}$ of the differential inclusion \eqref{Eq_stable_DI} satisfying  
	\begin{equation}
		\label{Eq_Prop_abstract_descrease_lyapunov_0}
		\bar{x}^{(i)}(0) \in \X_0, \quad \mathrm{dist}\left( \bar{x}^{(i)}(0), \A  \right) \geq \varepsilon, \quad \Psi(\bar{x}^{(i)}(T)) > \Psi(\bar{x}^{(i)}(0)) - \frac{1}{i+1}.
	\end{equation}
	
	Notice that $\sup_{i\geq 0, ~ 0\leq t\leq T} \norm{\bar{x}^{(i)}(t)} \leq r$, 
	we can conclude that $\{\bar{x}^{(i)}\}$ is uniformly equicontinuous in $L^2[0, T]$. Here $L^2[0, T]$ denotes the Hilbert space of square-integrable mappings from $[0, T]$ to $\Rn$. By successively employing the Arzel\`a-Ascoli theorem and Banach-Alaoglu theorem,   there exists a subsequence $\{i_k\}$ and an absolutely continuous function $\hat{x}: [0, T] \to \bb{R}$ such that  $\{\bar{x}^{(i_k)}\}$  converges uniformly to $\hat{x}$ and $\{(\bar{x}^{(i_k)})'\}$  converges weakly to $\hat{x}'$, respectively in $L^2[0, T]$. 
	
	Then from Banach-Saks theorem, there exists a subsequence $\{i^*_{k}\} \subseteq \{i_k\}$ such that 
	\begin{equation}
		\lim_{j\to \infty} \frac{1}{j}\sum_{k = 1}^j\left(\bar{x}^{(i^*_k)}\right)'(t)  = \hat{x}'(t) \quad \text{for almost every} \; t\in [0,T].
	\end{equation}
	Then from the fact that $\caH$ is convex-valued, graph-closed, and locally bounded, for almost every $t \in [0, T]$, it follows from Jensen's inequality that 
	\begin{equation}
		\mathrm{dist}\left(  \frac{1}{j}\sum_{k = 1}^j \caH(\bar{x}^{(i^*_k)}(t)), \caH(\hat{x}(t)) \right) \leq \frac{1}{j}\sum_{k = 1}^j \mathrm{dist}\left(  \caH(\bar{x}^{(i^*_k)}(t)), \caH(\hat{x}(t)) \right) = 0. 
	\end{equation}
	Therefore, we can conclude that for almost every $t \in [0, T]$,
	\begin{equation}
		\hat{x}'(t) = \lim_{j\to \infty} \frac{1}{j}\sum_{k = 1}^j\left(\bar{x}^{(i^*_k)}\right)'(t) \in \frac{1}{j}\sum_{k = 1}^j \caH(\bar{x}^{(i^*_k)}(t)) \subseteq \caH(\hat{x}(t)). 
	\end{equation}
	As a result, $\hat{x}$ is a trajectory of the differential inclusion \eqref{Eq_stable_DI}, and Definition \ref{Defin_Lyapunov_function} illustrates that $\Psi(\hat{x}(T)) < \Psi(\hat{x}(0))$. On the other hand, \eqref{Eq_Prop_abstract_descrease_lyapunov_0} implies that
	\begin{equation}
		\hat{x}(0) \in \X, \quad \mathrm{dist}\left( \hat{x}(0), \A  \right) \geq  \varepsilon, \quad \Psi(\hat{x}(T)) \geq \Psi(\hat{x}(0)),
	\end{equation}
	and we get a contradiction. This completes the proof. 
	
\end{proof}

\subsubsection{Proof of Lemma \ref{Le_stability_close_DI_iter}}
\label{Subsection_proof_01}
\begin{proof}
	We prove this lemma by contradiction. Suppose there exists $\varepsilon > 0$ such that for any $\hat{\alpha}\in (0, \bar{\alpha}]$, there exists two sequence $\{\eta_k\}$ and $\{\delta_k\}$ upper-bounded by $\hat{\alpha}$, where the 
	corresponding sequence $\{\xk\}$ generated by \eqref{Eq_stable_iterate} with $x_0 \in \X_0$ has the property that for any trajectory $\bar{x}$ of the differential inclusion \eqref{Eq_stable_DI}, we have
	\begin{equation}
		\sup_{0\leq t\leq T}\norm{\bar{x}(t) - \hat{x}(t)} \geq  \varepsilon.
	\end{equation}
	Then for any $i \geq 0$, we can choose two sequence $\{\eta_{k}^{(i)}\}$ and $\{\delta_{k}^{(i)}\}$ upper-bounded by $\frac{\bar{\alpha}}{i+1}$, and select $\{\xi_{k+1}^{(i)}\}$ that is controlled by $(\frac{\bar{\alpha}}{i+1}, T, \{\eta_k^{(i)}\})$, such that there exists $\{x_{k}^{(i)}\}$  satisfying
	\begin{equation}
		\label{Eq_Le_stability_close_DI_iter_2}
		x_{k+1}^{(i)} = x_{k}^{(i)} - \eta_k^{(i)} (d_k^{(i)} + \xi_{k+1}^{(i)}), \quad d_k \in \ca{H}^{\delta_k^{(i)}}(x_{k}^{(i)}),
	\end{equation}
	and  the following inequality 
	\begin{equation}
		\label{Eq_Le_stability_close_DI_iter_0}
		\sup_{0\leq t\leq T}\norm{\bar{x}(t) - \hat{x}^{(i)}(t)} \geq  \varepsilon,
	\end{equation}
	holds for any trajectory $\bar{x}$ of the differential inclusion \eqref{Eq_stable_DI}. 
	Here $\hat{x}^{(i)}$ is a interpolated process of $\{x_{k}^{(i)}\}$ with respect to the stepsizes $\{\eta_{k}^{(i)}\}$. 
	
	Then let the sequence $\{y^{(i)}_k\}$ be generated by $y^{(i)}_{k+1} = y^{(i)}_k - \eta_k^{(i)} d_k^{(i)}$, where $\{d_k^{(i)}\}$ are from \eqref{Eq_Le_stability_close_DI_iter_2}. From the fact that $\{\xi^{(i)}_{k}\}$ is controlled by $(\frac{\bar{\alpha}}{i+1}, T, \{\eta_k^{(i)}\})$, we can conclude that 
	\begin{equation}
		\sup_{0\leq t\leq T} \norm{\hat{x}^{(i)}(t) - \hat{y}^{(i)}(t)} \leq \frac{\bar{\alpha}}{i+1}. 
	\end{equation}
	Here $\hat{y}^{(i)}$ is the interpolated process of $\{y_{k}^{(i)}\}$ with respect to the stepsizes $\{\eta_{k}^{(i)}\}$. As a result, we can conclude that for any trajectory $\bar{x}$ of the differential inclusion \eqref{Eq_stable_DI}, 
	\begin{equation}
		\label{Eq_Le_stability_close_DI_iter_1}
		\sup_{0\leq t\leq T}\norm{\bar{x}(t) - \hat{y}^{(i)}(t)} \geq  \varepsilon. 
	\end{equation}

	Then similar to the proof techniques as in Proposition \ref{Prop_abstract_descrease_lyapunov}, by successively employing the Arzel\`a-Ascoli theorem, Banach-Alaoglu theorem, and Banach-Saks theorem, there exists a subsequence $\{i^*_{k}\}$ such that  $\{\hat{y}^{(i^*_k)}\}$ uniformly converges to $\hat{x}$, and 
	\begin{equation}
		\lim_{j\to \infty} \frac{1}{j}\sum_{k = 1}^j\left(\hat{y}^{(i^*_k)}\right)'(t)  = \hat{x}'(t) \quad \mbox{for almost every}\; t\in [0,T].
	\end{equation}
	Then from the fact that $\caH$ is convex-valued, graph-closed, and locally bounded, we can conclude that  $\hat{y}'(t) \in -\caH(\hat{x}(t))$ holds almost everywhere for $t\in [0, T]$. 
	Therefore, $\hat{y}$ is a trajectory of the differential inclusion \eqref{Eq_stable_DI} and there exists a trajectory $\bar{x}$ of the differential inclusion \eqref{Eq_stable_DI} such that 
	\begin{equation}
		\liminf_{i\to \infty} \sup_{0\leq t\leq T}\norm{\bar{x}(t) - \hat{y}^{(i)}(t)} = 0,  
	\end{equation}
	which contradicts \eqref{Eq_Le_stability_close_DI_iter_1}. This completes the proof. 
\end{proof}

\begin{figure}[htbp]
	\centering
	\tikzset{every picture/.style={line width=0.75pt}}
	
	\subfigure[Illustration of \eqref{Eq_Theo_abstract_stability_0}]  
	{  
		\resizebox{0.45\textwidth}{!}{%
			\begin{tikzpicture}[x=0.75pt,y=0.75pt,yscale=-1,xscale=1]
				
				\draw   (129,146.6) .. controls (151,116.6) and (212,136.6) .. (211,156.6) .. controls (210,176.6) and (217,196.6) .. (237,226.6) .. controls (257,256.6) and (190,268.6) .. (170,238.6) .. controls (150,208.6) and (107,176.6) .. (129,146.6) -- cycle ;
				\draw   (269,38.6) .. controls (274,36.1) and (382,133.6) .. (362,153.6) .. controls (342,173.6) and (307,209.6) .. (327,239.6) .. controls (347,269.6) and (150,304.6) .. (130,274.6) .. controls (110,244.6) and (42.5,172.1) .. (69,138.6) .. controls (95.5,105.1) and (131,91.6) .. (137,50.6) .. controls (143,9.6) and (264,41.1) .. (269,38.6) -- cycle ;
				\draw    (168,192.6) .. controls (216,182.6) and (200,217.6) .. (232,175.6) .. controls (263.68,134.02) and (217.93,149.29) .. (273.29,117.58) ;
				\draw [shift={(275,116.6)}, rotate = 150.36] [color={rgb, 255:red, 0; green, 0; blue, 0 }  ][line width=0.75]    (10.93,-3.29) .. controls (6.95,-1.4) and (3.31,-0.3) .. (0,0) .. controls (3.31,0.3) and (6.95,1.4) .. (10.93,3.29)   ;
				\draw [shift={(168,192.6)}, rotate = 348.23] [color={rgb, 255:red, 0; green, 0; blue, 0 }  ][fill={rgb, 255:red, 0; green, 0; blue, 0 }  ][line width=0.75]      (0, 0) circle [x radius= 3.35, y radius= 3.35]   ;
				\draw [color={rgb, 255:red, 0; green, 0; blue, 0 }  ,draw opacity=1 ][line width=0.75]  [dash pattern={on 0.84pt off 2.51pt}]  (267,112.6) -- (236,141.6) -- (253,162.6) -- (245,173.6) -- (227,187.6) -- (210,187.6) -- (170,180.6) ;
				\draw [shift={(170,180.6)}, rotate = 189.93] [color={rgb, 255:red, 0; green, 0; blue, 0 }  ,draw opacity=1 ][fill={rgb, 255:red, 0; green, 0; blue, 0 }  ,fill opacity=1 ][line width=0.75]      (0, 0) circle [x radius= 1.34, y radius= 1.34]   ;
				\draw [shift={(267,112.6)}, rotate = 136.91] [color={rgb, 255:red, 0; green, 0; blue, 0 }  ,draw opacity=1 ][fill={rgb, 255:red, 0; green, 0; blue, 0 }  ,fill opacity=1 ][line width=0.75]      (0, 0) circle [x radius= 1.34, y radius= 1.34]   ;
				
				\draw (40,110) node [anchor=north west][inner sep=0.75pt]   [align=left] {$\displaystyle \mathcal{L}_{3r_{0}}$};
				\draw (148,243) node [anchor=north west][inner sep=0.75pt]   [align=left] {$\displaystyle \mathcal{L}_{2r_{0}}$};
				\draw (154,202) node [anchor=north west][inner sep=0.75pt]  [font=\footnotesize] [align=left] {$\displaystyle \overline{x}^{\star }( 0)$};
				\draw (160,168) node [anchor=north west][inner sep=0.75pt]  [font=\footnotesize] [align=left] {$\displaystyle x_{i}$};
				\draw (264,127) node [anchor=north west][inner sep=0.75pt]  [font=\footnotesize] [align=left] {$\displaystyle \overline{x}^{\star }( T)$};
				\draw (241,98) node [anchor=north west][inner sep=0.75pt]  [font=\footnotesize] [align=left] {$\displaystyle x_{\Lambda ( \lambda ( i) +T)}$};

			\end{tikzpicture}
		}%
	}
	\subfigure[Illustration of \eqref{Eq_Theo_abstract_stability_1}]  
	{  
		\resizebox{0.45\textwidth}{!}{%
			\begin{tikzpicture}[x=0.75pt,y=0.75pt,yscale=-1,xscale=1]
				
				\draw   (117,152.6) .. controls (139,126.6) and (240,126.6) .. (239,146.6) .. controls (238,166.6) and (258,188.6) .. (278,218.6) .. controls (298,248.6) and (165,234.6) .. (145,204.6) .. controls (125,174.6) and (95,178.6) .. (117,152.6) -- cycle ;
				\draw   (269,38.6) .. controls (274,36.1) and (382,133.6) .. (362,153.6) .. controls (342,173.6) and (307,209.6) .. (327,239.6) .. controls (347,269.6) and (150,304.6) .. (130,274.6) .. controls (110,244.6) and (42.5,172.1) .. (69,138.6) .. controls (95.5,105.1) and (131,91.6) .. (137,50.6) .. controls (143,9.6) and (264,41.1) .. (269,38.6) -- cycle ;
				\draw    (175,94.6) .. controls (223,84.6) and (259,124.6) .. (264,149.6) .. controls (268.95,174.35) and (239.6,159.9) .. (206.02,190.65) ;
				\draw [shift={(205,191.6)}, rotate = 316.74] [color={rgb, 255:red, 0; green, 0; blue, 0 }  ][line width=0.75]    (10.93,-3.29) .. controls (6.95,-1.4) and (3.31,-0.3) .. (0,0) .. controls (3.31,0.3) and (6.95,1.4) .. (10.93,3.29)   ;
				\draw [shift={(175,94.6)}, rotate = 348.23] [color={rgb, 255:red, 0; green, 0; blue, 0 }  ][fill={rgb, 255:red, 0; green, 0; blue, 0 }  ][line width=0.75]      (0, 0) circle [x radius= 3.35, y radius= 3.35]   ;
				\draw [color={rgb, 255:red, 0; green, 0; blue, 0 }  ,draw opacity=1 ][line width=0.75]  [dash pattern={on 0.84pt off 2.51pt}]  (178,78.6) -- (243,96.6) -- (230,124.6) -- (274,130.6) -- (272.95,155.27) -- (242,178.6) -- (205,180.6) ;
				\draw [shift={(205,180.6)}, rotate = 182.01] [color={rgb, 255:red, 0; green, 0; blue, 0 }  ,draw opacity=1 ][fill={rgb, 255:red, 0; green, 0; blue, 0 }  ,fill opacity=1 ][line width=0.75]      (0, 0) circle [x radius= 1.34, y radius= 1.34]   ;
				\draw [shift={(178,78.6)}, rotate = 15.48] [color={rgb, 255:red, 0; green, 0; blue, 0 }  ,draw opacity=1 ][fill={rgb, 255:red, 0; green, 0; blue, 0 }  ,fill opacity=1 ][line width=0.75]      (0, 0) circle [x radius= 1.34, y radius= 1.34]   ;
				
				\draw (40,110) node [anchor=north west][inner sep=0.75pt]   [align=left] {$\displaystyle \mathcal{L}_{3r_{0}}$};
				\draw (148,230) node [anchor=north west][inner sep=0.75pt]   [align=left] {$\displaystyle \mathcal{L}_{3r_{0} - \frac{\iota}{2}}$};
				\draw (159,100) node [anchor=north west][inner sep=0.75pt]  [font=\footnotesize] [align=left] {$\displaystyle \overline{x}^{\star }( 0)$};
				\draw (173,68) node [anchor=north west][inner sep=0.75pt]  [font=\footnotesize] [align=left] {$\displaystyle x_{i}$};
				\draw (194,193) node [anchor=north west][inner sep=0.75pt]  [font=\footnotesize] [align=left] {$\displaystyle \overline{x}^{\star }( T)$};
				\draw (183,163) node [anchor=north west][inner sep=0.75pt]  [font=\footnotesize] [align=left] {$\displaystyle x_{\Lambda ( \lambda ( i) +T)}$};

			\end{tikzpicture}
		}%
	}
	
	\caption{Illustrations of the proof technique in Theorem \ref{Theo_abstract_stability}. Here the dash line refers to the interpolated process of $\{x_k: i\leq k\leq \Lambda(\lambda(i)+T)\}$, and the arrowed curve refers to a trajectory $\bar{x}^{\star}$ of the differential inclusion \eqref{Eq_stable_DI}. }
	\label{Fig_illustration_stab_0}
\end{figure}

\subsubsection{Proof of Theorem \ref{Theo_abstract_stability}}

\label{Subsection_proof_1}
We now present the proof of Theorem \ref{Theo_abstract_stability}. Inspired by the pioneering work  \cite{josz2023lyapunov}, Theorem \ref{Theo_abstract_stability} generalizes the global stability results from \cite{josz2023convergence}, extending them from vanilla SGD to a wider range of subgradient methods. Based on the nonsmooth Morse-Sard's property in Assumption \ref{Assumption_f_definable}(3), we can choose $r_0>0$ sufficiently large such that $\ca{L}_{r_0}$ encloses the stable set $\A$ of the differential inclusion \eqref{Eq_stable_DI}. Moreover,  together with Proposition \ref{Prop_abstract_descrease_lyapunov}, Lemma \ref{Le_stability_close_DI_iter} provides a technique to estimate the decrease on the function values $\{\Psi(\xk) \,:\,k\geq i\}$ when $x_i \in \ca{L}_{2r_0}$. More precisely, for  any $T>0$ and any sequence $\{\xk\}$ generated by \eqref{Eq_stable_iterate} using sufficient small stepsizes $\{\eta_k\}$, whenever $x_i \in \ca{L}_{2r_0}$, there exists $\iota > 0$ such that $\Psi(x_{\Lambda(\lambda(i) + T)})- \Psi(x_i) \leq \frac{\iota}{2}$. Then the results of Theorem \ref{Theo_abstract_stability} follow from the coercivity of $\Psi$.

\begin{proof}[Proof of Theorem \ref{Theo_abstract_stability}]
	From the fact in Assumption \ref{Assumption_f_definable}(3) that $\{\Psi(x): x \in \ca{A}\}$ is a finite set, we can choose $r_0 > 0$ such that 
	\begin{equation}
		r_0 >  \sup_{x \in \X_0} \Psi(x), \quad r_0> \max \{\Psi(x):  x \in \ca{A} \}, 
	\end{equation}
	and we denote $M_D := \sup_{x \in \ca{L}_{5r_0}} \norm{\caH(x)}$, and set $M_{\Psi}$ as the Lipschitz constant of $\Psi$ over the level set $\ca{L}_{5r_0}$.  The definition of the level set can be found in \eqref{Eq_defin_level_set}.

	From the local boundedness of $\caH$, for any $x_0 \subset \ca{L}_{3r_0}$, any $T \in \left(0, \frac{r_0}{4(M_D +1)(M_{\Psi} +1)}\right)$, and any $\hat{\alpha}_0 \in \left(0, \frac{r_0}{2(M_D + 1)}\right)$,  with any $\hat{\alpha}_0$-upper-bounded $\{\eta_k\}$ and $\{\delta_k\}$ and any $(\hat{\alpha}_0, T, \{\eta_k\})$-controlled noises $\{\xi_k\}$, it holds that $\{\xk: \lambda(k) \leq T\} \subset \ca{L}_{4r_0}$.  In addition, for any trajectory $\bar{x}$ of the differential inclusion \eqref{Eq_stable_DI} with $\bar{x}(0) \in \ca{L}_{3r_0}$, the local boundedness of $\caH$ and the Lipschitz continuity of $\Psi$ guarantee that $\sup_{0\leq t\leq T} \Psi(\bar{x}(t)) \leq 4r_0$.

	Therefore, based on Proposition \ref{Prop_abstract_descrease_lyapunov},  we fix $T \in \left(0, \frac{r_0}{4(M_D +1)(M_{\Psi} +1)}\right)$ and choose $\iota$ as the descending coefficient of \eqref{Eq_stable_DI} with respect to $\left(\ca{L}_{5r_0}, \frac{r_0}{4(M_{\Psi}+1)}, T\right)$.  Moreover, we choose  
	\begin{equation}   
		\label{Eq_Theo_abstract_stability_3}
		\hat{\varepsilon} =  \min \left\{\frac{\min\{1, \iota  \}}{4(M_\Psi + 1)}, \;\frac{r_0}{4(M_D +1)(M_{\Psi} +1)} \right\}. 
	\end{equation}

	From Lemma \ref{Le_stability_close_DI_iter},  there exist a constant $\alpha \in (0, \hat{\varepsilon})$, such that  for any $\alpha$-upper-bounded $\{\eta_k\}$ and $\{\delta_k\}$, any $(\alpha, T, \{\eta_k\})$-controlled $\{\xi_k\}$, and any $\{\xk\}$ generated by \eqref{Eq_stable_iterate} with $x_0 \in \ca{L}_{3r_0}$,  there exists a trajectory $\bar{x}$ of the differential inclusion \eqref{Eq_stable_DI} such that $\sup_{0\leq t\leq T}\norm{\bar{x}(t) - \hat{x}(t)} \leq \hat{\varepsilon}$. 
	Here $\hat{x}$ is the interpolated process of $\{\xk\}$ with respect to the stepsizes $\{\eta_k\}$.

	Then with the above particular choice of $\alpha$ and $T$, for any $\alpha$-upper-bounded $\{\eta_k\}$ and $\{\delta_k\}$, any $(\alpha, T, \{\eta_k\})$-controlled $\{\xi_k\}$, and any $i \in \bb{N}_+$, when $x_i \in \ca{L}_{2r_0}$, Lemma \ref{Le_stability_close_DI_iter} illustrates that there exists a trajectory $\bar{x}^\star$ of \eqref{Eq_stable_DI} with $\bar{x}^\star(0)=x_i$ such that $ \sup_{i \leq j \leq  \Lambda(\lambda(i)+T)}\norm{x_j - \bar{x}^\star(\lambda(j) - \lambda(i))} \leq \hat{\varepsilon}$. Then it holds for any $j \in [i, \Lambda(\lambda(i) + T)]$ (hence $\lambda(j)-\lambda(i)\leq T$) that 
	\begin{equation}
		\label{Eq_Theo_abstract_stability_0}
		\begin{aligned}
			&\Psi(x_j) \leq  \Psi(\bar{x}^\star(\lambda(j) -\lambda(i) )) + \hat{\varepsilon} M_{\Psi}\\
			\leq{}& \Psi(\bar{x}^\star(0)) + \hat{\varepsilon} M_{\Psi}  \leq   \Psi(x_i) +  2\hat{\varepsilon} M_{\Psi} < 3 r_0. 
		\end{aligned}
	\end{equation}
	
	On the other hand, when  $x_i \in \ca{L}_{3r_0} \setminus \ca{L}_{2r_0}$, Lemma \ref{Le_stability_close_DI_iter} illustrates that there exists a trajectory $\bar{x}^\star$ of \eqref{Eq_stable_DI} such that $\norm{\bar{x}^\star(0) - x_i} \leq \hat{\varepsilon}$ and $\norm{x_{\Lambda(\lambda(i) + T)} - \bar{x}^\star(T)} \leq \hat{\varepsilon}$. Then the definition of $\hat{\varepsilon}$ shows that $\Psi(\bar{x}^\star(0)) \geq 2r_0$. Moreover, from the choice of $T$ and the definition of $\iota$, Proposition \ref{Prop_abstract_descrease_lyapunov} illustrates that $\Psi(\bar{x}^\star(T)) \leq \Psi(\bar{x}^\star(0)) - \iota$. 
	As a result, we have 
	\begin{equation}
		\label{Eq_Theo_abstract_stability_1}
		\begin{aligned}
			&\Psi\left(x_{\Lambda(\lambda(i) + T)}\right) \leq \Psi(\bar{x}^\star(T)) + \hat{\varepsilon} M_{\Psi} 
			= \Psi(\bar{x}^\star(0)) + \hat{\varepsilon} M_{\Psi} + \left(\Psi(\bar{x}^\star(T)) - \Psi(\bar{x}^\star(0))\right) \\
			\leq{}&  \Psi(\bar{x}^\star(0)) + \hat{\varepsilon} M_{\Psi} - \iota  \leq \Psi(x_i) + 2\hat{\varepsilon} M_{\Psi} - \iota \\
			<{}&  \Psi(x_i) - \frac{1}{2}\iota \leq 3r_0- \frac{1}{2}\iota . 
		\end{aligned}
	\end{equation}
	Figure \ref{Fig_illustration_stab_0} illustrates the results in \eqref{Eq_Theo_abstract_stability_0} and \eqref{Eq_Theo_abstract_stability_1}.  
	
	Furthermore, it follows from the choice of $T$ that
	\begin{equation}
		\label{Eq_Theo_abstract_stability_2}
		\sup_{i\leq j \leq \Lambda(\lambda(i) + T)} |\Psi(x_j) - \Psi(x_i)| \leq T (M_D + 1) M_{\Psi}  \leq \frac{r_0}{4}
	\end{equation}
	holds   for any $i > 0$. Then let $k_j = \Lambda(jT)$ for any $j\geq 0$, by combining \eqref{Eq_Theo_abstract_stability_0} and \eqref{Eq_Theo_abstract_stability_1} together, it holds that $\{x_{k_j}\} \subset \ca{L}_{3 r_0}$. Together with \eqref{Eq_Theo_abstract_stability_2}, we have 
	\begin{equation}
		\sup_{k\geq 0} \Psi(\xk) \leq \sup_{j\geq 0} \Psi(x_{k_j}) + \frac{1}{4}r_0  < 4r_0. 
	\end{equation}
	Therefore, we can conclude that the sequence $\{\xk\}$ is restricted in $\ca{L}_{4r_0}$. 
	This completes the proof by setting $\tilde{r} = 4r_0$.  
	
\end{proof}

\subsubsection{Proof of Theorem \ref{Theo_abstract_stability_convergence_sequence}}
\label{Subsection_proof_2}
In this subsection, we present the proof of Theorem \ref{Theo_abstract_stability_convergence_sequence}. We begin our theoretical analysis with Theorem \ref{Theo_abstract_stability_converge}, which extends the results in \cite[Theorem 1]{josz2023global} and exhibits the convergence properties of the sequence of function values $\{\Psi(\xk)\}$. Compared with those results in \cite[Theorem 1]{josz2023global},  Theorem \ref{Theo_abstract_stability_converge} admits a broader range of Lyapunov functions $\Psi$ and offers more flexibility in the choice of the sequence $\{\eta_k\}$.  
\begin{theo}
	\label{Theo_abstract_stability_converge}
	Suppose Assumption \ref{Assumption_f_definable} holds, and let $\X_0$ be any compact subset of $\Rn$. Then for any $\varepsilon > 0$, there exists $\alpha > 0$ (depending on $\varepsilon$) and $\alpha_{ub}, T_{ub} > 0$ (independent of $\varepsilon$), such that for any $(\alpha_{ub}, \alpha)$-asymptotically-upper-bounded sequences $\{\eta_k\}$ and $\{\delta_k\}$, and any $(\alpha_{ub}, \alpha, T_{ub}, \{\eta_k\})$-asymptotically-controlled sequence $\{\xi_k\}$, the sequence $\{\xk\}$ generated by \eqref{Eq_stable_iterate} with $x_0 \in \X_0$ satisfies 
	\begin{equation}
		\mathop{\lim\sup}_{k\to \infty} ~ \mathrm{dist}\left(\Psi(\xk),  \{\Psi(x): x \in \A\} \right) \leq \varepsilon.
	\end{equation}
\end{theo}
\begin{proof}
	From Theorem \ref{Theo_abstract_stability}, there exists $\alpha_1 > 0$, $r_1 > 0$ such that  for any $\alpha_1$-upper-bounded $\{\eta_k\}$ and $\{\delta_k\}$ and any $(\alpha_1, T, \{\eta_k\})$-controlled  $\{\xi_k\}$, the sequence $\{\xk\}$ with $x_0 \in \X_0$ lies in $\ca{L}_{\frac{r_1}{2} }$. 
	Then from the coercivity of $\Psi$, we can conclude that the sequence $\{\Psi(\xk)\}$ is uniformly bounded.

	Denote $M_D := \sup_{x \in \ca{L}_{r_1}} \norm{\caH(x)}$, $M_{\Psi}$ as the Lipschitz constant of $\Psi$ over $\ca{L}_{r_1}$,  $\{l_1,..., l_{\hat{N}}\} := \{\Psi(x): x \in \A\}$ such that $l_1 < l_2<\ldots < l_{\hat{N}}$,  $\nu_{\Psi} := \frac{\min_{1\leq i\leq \hat{N}} |l_{i} - l_{i-1}|}{16(1+M_{\Psi})(1+M_D)} $, and $T_{ub} = \frac{\varepsilon}{16(M_D + 1)(M_{\Psi} + 1)}$. In addition, for any $s > 0$, we denote 
	\begin{equation}
		\ca{R}_{s} := \{x \in \ca{L}_{r_1} \,:\, \mathrm{dist}(\Psi(x), \{\Psi(x): x \in \A\} )\geq s\}. 
	\end{equation}

	Now  for any $\varepsilon \in (0, \min\{\nu_{\Psi}, 1\})$, we choose $T \in \left(0, \frac{\varepsilon}{16(M_D + 1)(M_{\Psi} + 1)}\right)$ and define $\iota$ as the descending coefficient for \eqref{Eq_stable_DI} with respect to $\left(\ca{L}_{r_1}, \frac{\varepsilon}{16(M_D + 1)(M_{\Psi} + 1)}, T\right)$. Then from Lemma \ref{Le_stability_close_DI_iter}, there exists 
	$\alpha_0 \in \left(0, \min\left\{\alpha_1,  \frac{\min\{\varepsilon, r_1, \iota \}}{8(M_D + 1)(M_{\Psi} + 1)} \right\} \right)$, such that for any $\alpha_0$-upper-bounded sequences $\{\eta_k\}$ and $\{\delta_k\}$, and any $(\alpha_0, T, \{\eta_k\})$-controlled sequence $\{\xi_k\}$, for any $\{\xk\}$ generated by \eqref{Eq_stable_iterate} with  $x_0 \in \ca{R}_{\frac{\varepsilon}{2}}$, there exists a trajectory $\bar{x}^\star$ of the differential inclusion \eqref{Eq_stable_DI} such that $\bar{x}^\star(0) \in \ca{R}_{\frac{\varepsilon}{2}}$ and 
	\begin{equation}
		\label{Eq_Theo_abstract_stability_converge_1}
		\sup_{0\leq t\leq T} \norm{\bar{x}^\star(t) - \hat{x}(t)} \leq \min\left\{\frac{\iota }{4(M_{\Psi} + 1)}, \frac{\varepsilon}{16 (M_{\Psi} + 1)} \right\}. 
	\end{equation}
	Here $\hat{x}$ is the interpolated process of the sequence $\{x_k\}$ with respect to $\{\eta_k\}$. 
	
	Then from the Lipschitz continuity of $\Psi$ over $\ca{L}_{r_1}$, we have that $\bar{x}^\star(0) \in \ca{R}_{\frac{\varepsilon}{2}}$. Moreover, for any $i> 0$ such that $x_i \in \ca{R}_{\frac{\varepsilon}{2}}$, it follows from the choices of $\varepsilon$ that $\mathrm{dist}(\bar{x}^\star(0), \A) \geq \frac{\varepsilon}{16(M_D + 1)(M_{\Psi} + 1)}$. Then it follows \eqref{Eq_Theo_abstract_stability_converge_1} and Proposition \ref{Prop_abstract_descrease_lyapunov} that 
	\begin{equation}
		\label{Eq_Theo_abstract_stability_converge_0}
		\begin{aligned}
			&\Psi(x_{\Lambda(\lambda(i) + T)}) \leq \Psi(\bar{x}^\star(T)) + \frac{\iota }{4(M_{\Psi} + 1)}  \cdot M_{\Psi} \\\
			={}& \Psi(\bar{x}^\star(0)) + \left( \Psi(\bar{x}^\star(T)) - \Psi(\bar{x}^\star(0)) \right) + \frac{\iota }{4(M_{\Psi} + 1)}  \cdot M_{\Psi} \leq  \Psi(\bar{x}^\star(0)) -  \iota + \frac{\iota }{4(M_{\Psi} + 1)}  \cdot M_{\Psi} \\
			\leq{}&  \Psi(x_i) -  \iota + 2\frac{\iota }{4(M_{\Psi} + 1)}  \cdot M_{\Psi} \leq  \Psi(x_i) - \frac{ \iota}{2}. 
		\end{aligned}
	\end{equation}

	Then based on \eqref{Eq_Theo_abstract_stability_converge_0}, the following elementary observation illustrates the $\mathop{\lim\inf}\limits_{k\to \infty}\Psi(\xk)$ is close to  $\{\Psi(x): x \in \A\}$. 
	\begin{myclaim}
		\label{Claim_Theo_abstract_stability_converge_0}
		For any $(\alpha_1, \frac{\alpha_0}{4})$-asymptotically-upper-bounded sequences $\{\eta_k\}$ and $\{\delta_k\}$ and any $(\alpha_1, \frac{\alpha_0}{4}, T_{ub}, \{\eta_k\})$-asymptotically-controlled sequence $\{\xi_k\}$,  we have  
		\begin{equation*}
			\mathrm{dist}\left( \liminf\limits_{k\to \infty}\Psi(\xk),  \{\Psi(x): x \in \A\} \right) \leq \frac{\varepsilon}{2}.
		\end{equation*}
	\end{myclaim}
	\begin{proof}[Proof of Claim \ref{Claim_Theo_abstract_stability_converge_0}]
		We prove this claim by contradiction. Suppose $\mathrm{dist}\left( \mathop{\lim\inf}\limits_{k\to \infty}\Psi(\xk),  \{\Psi(x): x \in \A\} \right) > \frac{\varepsilon}{2}$, then there exists a sequence of indexes $\{ k_j  \}$ such that 
		\begin{equation*}
			\{x_{k_j} \} \subset \ca{R}_{\frac{\varepsilon}{2}}, \quad \lim_{j\to \infty} k_j = \infty, \quad \text{and} \quad \lim\limits_{j\to \infty}\Psi(x_{k_j}) = \mathop{\lim\inf}\limits_{k\to \infty}\Psi(\xk).
		\end{equation*}
		
		Notice that  $T \leq T_{ub}$, thus $\{\xi_{k}\}$ is $(\alpha_1, \frac{\alpha_0}{4}, T, \{\eta_k\})$-asymptotically-controlled. Together with the fact that the sequences $\{\eta_k\}$ and $\{\delta_k\}$  are $(\alpha_{ub}, \frac{\alpha_0}{4})$-asymptotically-upper-bounded, there exists a sufficiently large $K>0$ such that 
		\begin{equation*}
			\sup_{k\geq K} \eta_k +\delta_k \leq \alpha_0, \quad \text{and} \quad \sup_{s\geq K,\, s \leq i\leq \Lambda(\lambda(s) + T) }  \norm{ \sum_{k = s}^i \eta_k \xi_{k+1}} \leq \alpha_0.
		\end{equation*}
		Then from \eqref{Eq_Theo_abstract_stability_converge_0}, for any $j\geq  0$ such that  $k_j\geq K$, we have $\Psi(x_{ \Lambda( \lambda(k_j) + T)}) \leq \Psi(x_{k_j}) - \frac{\iota}{2}$. Then it holds that 
		\begin{equation}
			\mathop{\lim\inf}\limits_{k\to \infty}\Psi(\xk) \leq \mathop{\lim\inf}_{j\to \infty} \Psi(x_{ \Lambda( \lambda(k_j) + T) }) \leq \lim\limits_{j\to 
				\infty}\Psi(x_{k_j}) - \frac{\iota}{2} = \mathop{\lim\inf}\limits_{k\to \infty}\Psi(\xk) - \frac{\iota}{2}.
		\end{equation}
		This leads to a contradiction. Therefore, we can conclude that 
		$\mathrm{dist}\left( \mathop{\lim\inf}\limits_{k\to \infty}\Psi(\xk),  \{\Psi(x): x \in \A\} \right) \leq \frac{\varepsilon}{2}$. This completes the proof of Claim \ref{Claim_Theo_abstract_stability_converge_0}. 
	\end{proof}

	Next, we present an elementary observation for estimating $\limsup\limits_{k\to \infty}\Psi(\xk)$, based on the same proof techniques as those in Claim \ref{Claim_Theo_abstract_stability_converge_0}.
	\begin{myclaim}
		\label{Claim_Theo_abstract_stability_converge_1}
		For any $(\alpha_1, \frac{\alpha_0}{4})$-asymptotically-upper-bounded sequences $\{\eta_k\}$ and $\{\delta_k\}$, and any $(\alpha_1, \frac{\alpha_0}{4}, T_{ub}, \{\eta_k\})$-asymptotically-controlled sequence $\{\xi_k\}$, we have $$\mathrm{dist}\left( \mathop{\lim\sup}\limits_{k\to \infty}\Psi(\xk),  \{\Psi(x): x \in \A\} \right) \leq \varepsilon.$$ 
	\end{myclaim}
	\begin{proof}[Proof of Claim \ref{Claim_Theo_abstract_stability_converge_1}]
		
		We prove this claim by contradiction. That is, we assume that there exists $(\alpha_1, \frac{\alpha_0}{4})$-asymptotically-upper-bounded sequences $\{\eta_k\}$ and $\{\delta_k\}$, together with a  $(\alpha_1, \frac{\alpha_0}{4}, T_{ub}, \{\eta_k\})$-asymptotically-controlled sequence $\{\xi_k\}$,  such that we have  $\mathrm{dist}\Big( \mathop{\lim\sup}\limits_{k\to \infty}\Psi(\xk),  \{\Psi(x): x \in \A\} \Big) > \varepsilon$. 
		Notice that  $T \leq T_{ub}$, so $\{\xi_{k}\}$ is $(\alpha_1, \frac{\alpha_0}{4}, T, \{\eta_k\})$-asymptotically-controlled. 
		Therefore, there exists a sufficiently large $K>0$ such that $\sup_{k\geq K} \eta_k  \leq \frac{\alpha_0}{2}$,  $\sup_{k\geq K} \delta_k  \leq \frac{\alpha_0}{2}$, and 
		\begin{equation*}
			\sup_{s\geq K, s \leq i\leq \Lambda(\lambda(s) + T) }  \norm{ \sum_{k = s}^i \eta_k \xi_{k+1}} \leq \frac{\alpha_0}{2}. 
		\end{equation*}
		 Then there exists infinitely many indexes $\{k_{j,1}\}$ such that for any $j\geq 0$, it holds that   
		\begin{equation}
			\label{Eq_Theo_abstract_stability_converge_2}
			k_{j,1} \geq K, \quad x_{k_{j,1}} \in \ca{R}_{\varepsilon}, \quad \sup_{j\geq 0} \left|\Psi(x_{k_{j,1}}) - \limsup_{k\to \infty} \Psi(\xk)\right| \leq \frac{\iota}{8}.
		\end{equation}
		For any $j\geq 0$, let $k_{j,0} = \inf \{l \geq 0: \Lambda(\lambda(l) + T) \geq k_{j,1}\}$. Then from the choice of $k_{j,0}$, we can conclude that when $k_{j,0} > K$, it holds that $\lambda(x_{k_{j,1}}) - \lambda(x_{k_{j,0}}) \geq T-\alpha_0$. 
		Therefore, we have  $\norm{x_{k_{j,1}} - x_{\Lambda(\lambda(k_{j,0}) + T)}} \leq (M_D+1)\alpha_0$. As a result, when $k_{j,0} \geq K$, from \eqref{Eq_Theo_abstract_stability_converge_0} and the choice of $\alpha_0$,  it holds that  $x_{k_{j,0}} \in \ca{R}_{\frac{\varepsilon}{2} }$, hence    
		\begin{equation}
			\Psi(x_{k_{j,1}}) \leq \Psi(x_{\Lambda(\lambda(k_{j,0}) + T)})  + M_{\Psi}(M_D+1)\alpha_0 \leq \Psi(x_{k_{j,0}}) - \frac{\iota}{2} +  M_{\Psi}(M_D+1)\alpha_0 \leq \Psi(x_{k_{j,0}}) - \frac{\iota}{4}. 
		\end{equation}
		As a result, there exists infinitely many indexes $\{k_{j,0}\}$ such that 
		\begin{equation}
			\begin{aligned}
				&\limsup_{k\to \infty} \Psi(\xk) \geq \limsup_{j\to \infty}\Psi(x_{k_{j,0}}) \geq \limsup_{j\to \infty}\Psi(x_{k_{j,1}})+ \frac{\iota}{4} \\
				\geq{}& \limsup_{k\to \infty} \Psi(\xk) + \frac{\iota}{8} > \limsup_{k\to \infty} \Psi(\xk).
			\end{aligned}
		\end{equation}
		Here the second last inequality follows from \eqref{Eq_Theo_abstract_stability_converge_2}. 
		This leads to the contradiction and completes the proof of  Claim \ref{Claim_Theo_abstract_stability_converge_1}.

	\end{proof}

	Therefore, by combining Claim \ref{Claim_Theo_abstract_stability_converge_0} and Claim \ref{Claim_Theo_abstract_stability_converge_1} together, we can conclude that 
	\begin{equation}
		\begin{aligned}
			\mathrm{dist}\left( \mathop{\lim\inf}\limits_{k\to \infty}\Psi(\xk),  \{\Psi(x): x \in \A\} \right) \leq  \varepsilon, \quad \mathrm{dist}\left( \mathop{\lim\sup}\limits_{k\to \infty}\Psi(\xk),  \{\Psi(x): x \in \A\} \right) \leq  \varepsilon.
		\end{aligned}
	\end{equation}
	Then together with the fact that $\{\eta_k\}$ and $\{\delta_k\}$ are asymptotically upper-bounded by $(\alpha_1, \frac{\alpha_0}{4})$, we can conclude that 
	\begin{equation}
		\limsup_{k\geq 0} |\Psi(\xkp) - \Psi(\xk)| \leq \alpha_0 (M_D + \alpha_0) M_{\Psi} \leq \frac{1}{2} \min_{1\leq i\leq N'} |l_{i} - l_{i-1}|. 
	\end{equation}
	As a result, there exists $i' \in [\hat{N}]$, such that 
	\begin{equation}
		|\mathop{\lim\inf}\limits_{k\to \infty}\Psi(\xk) - l_{i'} | \leq  \varepsilon, \quad |\mathop{\lim\sup}\limits_{k\to \infty}\Psi(\xk) - l_{i'} | \leq  \varepsilon.
	\end{equation}
	Then by choosing $\alpha_{ub} = \alpha_1$ and $\alpha = \frac{\alpha_0}{4}$, we complete the proof.

\end{proof}

Finally, based on Theorem \ref{Theo_abstract_stability_converge}, we present the proof of Theorem \ref{Theo_abstract_stability_convergence_sequence}.

\begin{proof}[Proof of Theorem \ref{Theo_abstract_stability_convergence_sequence}]
	
	From Theorem \ref{Theo_abstract_stability}, there exists $\alpha_1, T_0 > 0$, $r_1 > 0$ such that  for any   $\alpha_1$-upper-bounded sequences $\{\eta_k\}$ and $\{\delta_k\}$, and any $(\alpha_1, T_0, \{\eta_k\})$-controlled sequence $\{\xi_k\}$, any sequence $\{\xk\}$ in \eqref{Eq_stable_iterate} lies in $\ca{L}_{\frac{r_1}{2}}$. Denote  $M_D := \sup_{x \in \ca{L}_{r_1}} \norm{\caH(x)}$, and $M_{\Psi}$ as the Lipschitz constant of $\Psi$ over $\ca{L}_{r_1}$. Moreover, for any $\varepsilon > 0$,  we can choose   $T \in \left(0, \min\left\{T_0, \frac{\varepsilon}{16(M_D + 1)(M_{\Psi} + 1)} \right\}\right)$, and denote $\iota$ as the descending coefficient for the differential inclusion \eqref{Eq_stable_DI} with respect to $(\ca{L}_{r_1}, \frac{\varepsilon}{4}, T)$.

	From Lemma \ref{Le_stability_close_DI_iter}, there exists $\alpha_0 > 0$ such that    for any   $\alpha_0$-upper-bounded $\{\eta_k\}$ and $\{\delta_k\}$, any $(\alpha_0, T_0, \{\eta_k\})$-controlled sequence $\{\xi_k\}$, and any $\{\xk\}$ generated by \eqref{Eq_stable_iterate} with  $x_0 \in \X_0$, there exists a trajectory $\bar{x}^\star$ of the differential inclusion \eqref{Eq_stable_DI} satisfying
	\begin{equation}
		\label{Eq_Theo_abstract_stability_convergence_sequence_1}
		\sup_{0\leq t\leq T} \norm{\bar{x}^\star(t) - \hat{x}(t)} \leq \min\left\{\frac{\iota }{4(M_{\Psi} + 1)}, \frac{\varepsilon}{16 (M_{\Psi} + 1)} \right\}.
	\end{equation}
	Here $\hat{x}$ is the interpolated process of the sequence $\{\xk\}$ with respect to $\{\eta_k\}$. 
	
	In addition, from Theorem \ref{Theo_abstract_stability_converge},there exists $\alpha_{ub} \in (0, \alpha_1)$ and $\alpha_2 \in (0, \alpha_0)$ such that for any  $(\alpha_{ub}, \alpha_2)$-asymptotically-upper-bounded $\{\eta_k\}$ and $\{\delta_k\}$, and any  $(\alpha_{ub}, \alpha_2, T, \{\eta_k\})$-asymptotically-controlled $\{\xi_k\}$, 
	we have  that
	the sequence $\{x_k\}$ generated by \eqref{Eq_stable_iterate} with $x_0 \in \X_0$ satisfies 
	\begin{equation}
		\label{Eq_Theo_abstract_stability_convergence_sequence_0}
		\mathop{\lim\sup}_{k\to \infty} ~ \mathrm{dist}(\Psi(\xk),  \{\Psi(x): x \in \A\} ) \leq \min\left\{\frac{\iota }{16(M_{\Psi} + 1)}, \frac{\varepsilon}{2} \right\}.
	\end{equation}

	Now we prove this theorem by contradiction. Suppose $\mathop{\lim\sup}_{k\to \infty} \mathrm{dist}(\xk, \A) > \varepsilon$. Then there exists a sequence of indexes $\{k_j\}$ such that $\mathrm{dist}(x_{k_j}, \A) > \varepsilon$, and $\lim_{j\to \infty} x_{k_j} = \tilde{x}$. For each $j\geq 0$, it directly follows from  \eqref{Eq_Theo_abstract_stability_convergence_sequence_1} and Proposition \ref{Prop_abstract_descrease_lyapunov} that 
	\begin{equation}
		\begin{aligned}
			&\Psi(x_{ \Lambda(\lambda(k_j) + T )}) \leq \Psi(\bar{x}^\star(T)) + M_{\Psi} \cdot \frac{\iota}{4 (1+M_{\Psi})} \\
			={}&\Psi(\bar{x}^\star(0)) + M_{\Psi} \cdot \frac{\iota}{4 (1+M_{\Psi})} + \left(  \Psi(\bar{x}^\star(T)) - \Psi(\bar{x}^\star(0))\right) \leq \Psi(\bar{x}^\star(0)) + M_{\Psi} \cdot \frac{\iota}{4 (1+M_{\Psi})} - \iota\\
			\leq{}& \Psi(x_{k_j})+ 2M_\Psi \cdot \frac{\iota}{4M_{\Psi}}  - \iota  \leq \Psi(x_{k_j}) - \frac{\iota}{2}. 
		\end{aligned}
	\end{equation}
	Therefore, it holds that $\limsup_{k\to \infty} \Psi(\xk) - \liminf_{k\to \infty} \Psi(\xk) \geq \frac{\iota }{2}$. 
	Then the triangle inequality implies 
	\begin{equation}
		\begin{aligned}
			\mathop{\lim\sup}_{k\to \infty} ~ \mathrm{dist}(\Psi(\xk),  \{\Psi(x): x \in \A\} ) \geq{}& \frac{1}{2} \Big( \limsup_{k\to \infty} \Psi(\xk) - \liminf_{k\to \infty} \Psi(\xk) \Big)\\
			\geq{}& \frac{\iota }{4} > \frac{\iota }{16(M_{\Psi} + 1)},
		\end{aligned}
	\end{equation}
	which contradicts  \eqref{Eq_Theo_abstract_stability_convergence_sequence_0}. Therefore, we have that $\mathop{\lim\sup}_{k\to \infty} \mathrm{dist}(\xk, \A) \leq \varepsilon$. This completes the proof. 
\end{proof}

\subsection{Proofs for Section \ref{Subsection_32} and Section \ref{Subsection_33}}
\label{Subsection_appendix_reshuffling}

\subsubsection{Proof of Theorem \ref{Theo_stable_reshuffling_convergence}}
We first present the following lemma characterizing the update scheme of $\{x_{kN}\}$. 
\begin{lem}
	\label{Le_stable_reshuffling_xkN_scheme}
	Suppose Assumption \ref{Assumption_f_definable} and Assumption \ref{Assumption_stable_reshuffling} hold, and let $\X_0$ be any compact subset of $\Rn$. Then for any $\tilde{\varepsilon} > 0$, there exists a constant $\alpha > 0$ such that for any $k\geq 0$, any $\eta_{kN} \in (0, \alpha]$, and any $x_{kN} \in \X_0$, we have 
	\begin{equation}
		x_{(k+1)N} \in x_{kN} - N\eta_{kN} \conv\left( \caH^{\tilde{\varepsilon}}(x_{kN})\right).  
	\end{equation}
\end{lem}
\begin{proof}
	Let $M := \sup\left\{ \max_{i\in [N]}\{ \norm{\ca{U}_i(x)} \} + p(x): \mathrm{dist}\left(x, \X_0  \right) \leq 2 \right\}$.
	Then for any $\eta_{kN} \leq \frac{\min\{1, \tilde{\varepsilon}\}}{N (1+M)}$ and any $j \in [N]$, when $\sup_{l\leq j}\mathrm{dist} \left(x_{kN+l}, \X_0 \right) \leq 1 $, it follows from the update scheme of \eqref{Eq_stable_reshuffling} that 
	\begin{equation}
		\mathrm{dist}\left( x_{kN+j+1}, \X_0  \right) \leq \norm{x_{kN+j+1} - x_{kN}} \leq \sum_{l = 0}^j \eta_{kN+l} M \leq (j+1) \cdot \frac{1}{N (1+M)} \cdot M \leq 1. 
	\end{equation}
	As a result, by induction, we have $ \sup_{j\in [N]} \mathrm{dist}\left(x_{kN+j}, \X_0  \right) \leq 1 $. Then for any $j \in [N]$, it holds that $\norm{x_{kN+j+1} - x_{kN }} \leq j\eta_{kN} M \leq \tilde{\varepsilon}$. 
	As a result, from the update scheme \eqref{Eq_stable_reshuffling}, we can conclude that $\sup_{j \in [N]} \norm{x_{kN+j} - x_{kN}}  \leq \tilde{\varepsilon}$.

	Therefore, it holds for any $k > 0$ and any $kN\leq j < (k+1)N$ that $x_{j+1} \in x_j- \eta_j \ca{U}_{i_j}^{\tilde{\varepsilon}}(x_{kN})$. 
	Then we can conclude that 
	\begin{equation}
		\begin{aligned}
			x_{(k+1)N}  \in{}& x_{kN} - \sum_{j = kN}^{(k+1)N-1} \eta_j \ca{U}_{i_j}^{\tilde{\varepsilon}}(x_{kN}) 
			\subseteq x_{kN} - N\eta_{kN} \left( \frac{1}{N}\sum_{l=1}^N\ca{U}_l^{\tilde{\varepsilon}} (x_{kN}) \right) \\
			\subseteq{}& x_{kN} - N\eta_{kN} \conv\left( \caH^{\tilde{\varepsilon}} (x_{kN}) \right). 
		\end{aligned}
	\end{equation}
	This completes the proof. 
\end{proof}

In the following, we present Proposition \ref{Prop_stable_reshuffling_UB} to show that with sufficiently small stepsizes, the sequence $\{\xk\}$ generated by \eqref{Eq_stable_reshuffling} is uniformly bounded. 
\begin{proof}[Proof for Proposition \ref{Prop_stable_reshuffling_UB}]
	From Theorem \ref{Theo_abstract_stability}, we can conclude that, for any $\hat{r} > 4 \max\{0, \sup_{x \in \X_0 \cap \A} \Psi(x) \}$, there exist $\alpha_2 > 0$ such that for any sequences  $\{\tilde{\eta}_k\}$ and  $\{\tilde{\delta}_k\}$ upper-bounded by $\alpha_2$, and for any sequence $\{ \hat{x}_k \}$ generated by 
	\begin{equation}
		\label{Eq_Prop_stable_reshuffling_UB_0}
		\hat{x}_{k+1} \in \hat{x}_k - \tilde{\eta}_k \conv\left(\caH^{\tilde{\delta}_k}(\hat{x}_k) \right),    
	\end{equation}
	it holds that $\{ \hat{x}_k \} \subseteq \ca{L}_{\hat{r}}$. Based on this choice of $\alpha_0$ and $\hat{r}$, we can choose $\alpha_1 \in \left(0, \frac{\alpha_2}{\sup_{x \in \ca{L}_{\hat{r}} p(x) }}\right)$ such that for any sequences  $\{\tilde{\eta}_k\}$ and  $\{\tilde{\delta}_k\}$ upper-bounded by $\alpha_1$, it holds for the sequence $\{\xk\}$ in \eqref{Eq_stable_reshuffling} that 
    \begin{equation}
        \{\xk: k\in [N]\} \subseteq \ca{L}_{\hat{r}}, \qquad \forall x \in \X_0. 
    \end{equation}
     
     Then from Lemma \ref{Le_stable_reshuffling_xkN_scheme}, there exists $\alpha_0 \in (0, \frac{\alpha_1}{N})$ such that  for any $\{\eta_k\}$ upper-bounded by $\alpha_0$, whenever $\{x_{j}: j\leq (k+1)N\} \subset \ca{L}_{\hat{r}}$, the sequence $\{x_{(k+1)N}\}$ in \eqref{Eq_stable_reshuffling} satisfies 
	\begin{equation}
		x_{(k+1)N} \in  x_{kN} - N\eta_{kN} \conv\left(\caH^{\hat{\delta}_k}(x_{kN} ) \right),
	\end{equation}
	where $\{\hat{\delta}_k\}$ is upper-bounded by $\alpha_0$.

	Let $M_U:= \sup_{x\in \ca{L}_{2\tilde{r}}, ~ 1\leq i\leq N} (\norm{\ca{U}_i(x)} + p(x))$ and $M_{\Psi}$ be the Lipschitz constant of $\Psi$ over $\ca{L}_{2\tilde{r}}$.  Then by choosing $\{\eta_k\}$ satisfying
	\begin{equation}
		\sup_{k\geq 0}\eta_k \leq \min\left\{ \frac{\alpha_0 }{4N (M_U + 1)}, \frac{\min\{1, \tilde{r}\}}{4(M_U + 1)(M_{\Psi} +1)(N+1)}  \right\},
	\end{equation}
	Theorem \ref{Theo_abstract_stability} and \eqref{Eq_Prop_stable_reshuffling_UB_0} illustrate that for any $k>0$, whenever $\{ x_j: j\leq kN\} \subseteq \ca{L}_{2\tilde{r}}$, the sequence $\{x_{jN}: j\leq k\}$ lies in $\ca{L}_{\hat{r}}$.

	Now we claim that $\{ \xk: k\geq 0\} \subseteq \ca{L}_{2\hat{r}}$, and prove this claim by induction. Suppose we have $\{ x_j: j\leq kN\} \subseteq \ca{L}_{2\hat{r}}$ for some $k > 1$. Hence $\Psi(x_{kN}) \leq \hat{r}$ from the results established in the last paragraph. Then we have
	\begin{equation}
		\sup_{kN \leq l\leq (k+1)N} \norm{x_l - x_{kN}}  \leq 2(N+1) M_U \eta_{kN} \leq \frac{\hat{r}}{2(M_{\Psi}+1)}.
	\end{equation}
	Hence 
	\begin{equation}
		\sup_{kN \leq l\leq (k+1)N} \Psi(x_l) \leq \Psi(x_{kN}) + M_{\Psi} \cdot \frac{\hat{r}}{2M_{\Psi}} \leq 2\hat{r}.
	\end{equation}
	This illustrates that $\{ x_j: j\leq (k+1)N\} \subseteq \ca{L}_{2\hat{r}}$. Hence $x_{(k+1)N} \in \ca{L}_{\hat{r}}$ by Theorem \ref{Theo_abstract_stability}. Furthermore, it is easy to verify that $\{ \xk: k\leq N\} \subseteq \ca{L}_{2\hat{r}}$ from the local boundedness of $\caH$ and the choice of $\hat{r}$. 
	
	Therefore, with $\alpha_{ub} = \min\left\{ \frac{\alpha_0 }{4N (M_U + 1)}, \frac{\hat{r}}{4(M_U + 1)(M_{\Psi} +1)(N+1)}  \right\}$, we can conclude that the sequence $\{\xk\}$ is restricted in $\ca{L}_{2\hat{r}}$ when $\sup_{k\geq 0} \eta_k \leq \alpha_{ub}$. This completes the proof. 
\end{proof}

Finally, it is worth mentioning that Lemma \ref{Le_stable_reshuffling_xkN_scheme} illustrates that $\{X_{kN}: k = 1,2,\ldots\}$ can be viewed as an inexact discretization of the differential inclusion \eqref{Eq_stable_DI}. Moreover, Proposition \ref{Prop_stable_reshuffling_UB} illustrates that $\{\xk\}$ generated by \eqref{Eq_stable_reshuffling} is uniformly bounded. Therefore, we can conclude that the proof of Theorem \ref{Theo_stable_reshuffling_convergence} directly follows from the successive applications of Proposition \ref{Prop_stable_reshuffling_UB}, Lemma \ref{Le_stable_reshuffling_xkN_scheme}, and Theorem \ref{Theo_abstract_stability_convergence_sequence}, hence is omitted for simplicity. 


\subsubsection{Proof of Theorem \ref{Theo_stable_stochastic_main}}
\label{Subsection_proof_sto}
In this subsection, we present the proof of Theorem \ref{Theo_stable_stochastic_main}.

We first give some basic notations for the proofs throughout this subsection.
For any given $r \in \bb{R}$, we define the stopping time $\tau_r$ as 
\begin{equation}
	\tau_r := \inf\{k\geq 0: \xk \notin \ca{L}_{r}\}. 
\end{equation}
Then from \eqref{Eq_stable_random}, the update scheme for $\{x_{k\land \tau_r}\}$ can be expressed as 
\begin{equation}
	\label{Eq_stable_stopping}
	x_{(k+1) \land \tau_r } \in x_{k \land \tau_r} - c\eta_k\left( \caH^{\delta_k}(x_{k \land \tau_r}) + \chi(x_{k \land \tau_r}, \zeta_{k+1})  \right) \mathbbm{1}_{\tau_r > k}. 
\end{equation}
Here $\mathbbm{1}_{\tau_r > k}$ refers to the indicator function for the event $\{\tau_r > k\}$.  
It is easy to verify that almost surely over the event $\{\tau_r = \infty\}$, the update scheme in \eqref{Eq_stable_stopping} coincides with the update scheme in \eqref{Eq_stable_random}. Moreover, for any given $r \in \bb{R}$, let $\xi_{r, k+1} = \chi(x_{k \land \tau_r}, \zeta_{k+1}) \mathbbm{1}_{\tau_r > k}$, and $\ca{F}_k$ be the smallest $\sigma$-algebra that contains $\{(x_i, \zeta_i): i\leq k\}$. In addition, we choose $M_{r, \xi} > 0$ such that $\M_{r, \xi} \geq \sup_{x \in \ca{L}_r} \norm{\chi(x, \zeta)} $ for almost every $\zeta \in \Omega$.

Furthermore, for any $c > 0$, similar to the definitions in \eqref{Eq_def_lambda_Lambda}, we define the mappings $\lambda_c: \bb{N}_+ \to \bb{R}_+$ and $\Lambda_c: \bb{R}_+ \to \bb{N}_+$ as
\begin{equation}
	\lambda_c(0) := 0, \quad \lambda_c(i) := \sum_{k = 0}^{i-1} c\eta_k, \quad \Lambda_c(t) := \sup  \{k \geq 0: t\geq \lambda_c(k)\}.
\end{equation}
Then from Definition \ref{Defin_controlled_noise}, given a sequence of positive numbers $\{\eta_k\}$ and $\alpha, T > 0$, we say a sequence of vectors $\{\xi_k\}$ is  $(\alpha, T,  \{c\eta_k\} )$-controlled, if and only if  
\begin{equation}
	\sup_{s \leq i\leq \Lambda_c(\lambda_c(s) + T) }  \norm{ \sum_{k = s}^i \eta_k \xi_{k+1}} \leq \alpha
\end{equation}
holds for any $s \geq 0$.

Based on the aforementioned notations, we have the following lemma showing that the sequence $\{\xi_{r,k}\}$ is a uniformly bounded martingale difference sequence with respect to the filtration $\{\ca{F}_k\}$. 
\begin{lem}
	\label{Le_stoppingtime_UB}
	Suppose Assumption \ref{Assumption_stochastic} holds, then for any $r \in \bb{R}$ and any $k\geq 0$, $\xi_{r, k+1}$ is adapted to $\ca{F}_{k+1}$ and 
	\begin{equation}
		\bb{E}[\xi_{r, k+1} |\ca{F}_k] = 0, \quad \norm{\xi_{r, k+1}} \leq M_{r, \xi}. 
	\end{equation}
\end{lem}
\begin{proof}
	From Assumption \ref{Assumption_stochastic}, it is easy to verify that $\{\xi_{r, k}\}$ is uniformly bounded. Moreover, since $\xkp$ is measurable with respect to $\ca{F}_{k+1}$, the random variable $\xi_{r, k+1}$ is measurable with respect to $\ca{F}_{k+1}$ for any $k\geq 0$.  Furthermore, from the definition of $\tau_r$, it holds  for any $k\geq 0$ that 
	\begin{equation}
		\begin{aligned}
			\bb{E}[\chi(x_{k \land \tau_r}, \zeta_{k+1}) \mathbbm{1}_{\tau_r > k} | \ca{F}_k]=  \bb{E}[\chi(\xk, \zeta_{k+1}) \mathbbm{1}_{\tau_r > k} | \ca{F}_k] = \bb{E}[\chi(x_{k}, \zeta_{k+1}) | \ca{F}_k] \cdot  \mathbbm{1}_{\tau_r > k} = 0.
		\end{aligned}
	\end{equation}
	This completes the proof. 
\end{proof}

The following proposition directly follows from \cite[Proposition 4.2]{benaim2006dynamics}, hence we omit its proof for simplicity. 
\begin{prop}[Proposition 4.2 in \cite{benaim2006dynamics}]
	\label{Prop_beniam_bounded}
	Suppose Assumption \ref{Assumption_f_definable} and Assumption \ref{Assumption_stochastic} hold. Then for any $s, \kappa, T, r >0$, we have that  
	\begin{equation}
		\bb{P}\left(  \sup_{s \leq i\leq \Lambda_c(\lambda_c(s) + T) }  \norm{ \sum_{k = s}^i c\eta_k \xi_{r, k+1}} \geq \kappa \right) \leq C_1 \exp\left(   \frac{-\kappa^2}{C_2 c^2\sum_{i = s}^{\Lambda_c(\lambda_c(s) + T)-1} \eta_k^2} \right). 
	\end{equation}
	Here $C_1, C_2$ are positive constants that only depends on $n$ and $M_{r, \xi}$.  
\end{prop}

Based on Proposition \ref{Prop_beniam_bounded}, we present Proposition \ref{Prop_stable_highprob} to prove that the evaluation noises $\{\xi_{r, k}\}$ can be controlled with high probability. 
\begin{prop}
	\label{Prop_stable_highprob}
	Suppose Assumption \ref{Assumption_f_definable} and Assumption \ref{Assumption_stochastic} hold. Then for any $\varepsilon, \kappa, T, r >0$, there exists $\alpha_{ub}> 0$ such that for any $c \in (0,\alpha_{ub})$, it holds that 
	\begin{equation}
		\bb{P}\left(  \left\{ \exists s \geq 0, \text{ such that } \sup_{s \leq i\leq \Lambda_c(\lambda_c(s) + T) }  \norm{ \sum_{k = s}^i c \eta_k \xi_{r, k+1} } \geq \kappa  \right\} \right) \leq \varepsilon. 
	\end{equation}
\end{prop}
\begin{proof}
	For any given  $\varepsilon, \kappa, T >0$, let $c_0 = \frac{\kappa}{6\sqrt{C_2 T\sup_{k\geq 0}\eta_k}}$, then it is easy to verify that  for any $c \leq c_0$, it holds that 
	\begin{equation}
		\sup_{k\geq 0} \exp\left(   \frac{-\kappa^2}{9C_2 c^2T \eta_k} \right) \leq \frac{1}{10}.
	\end{equation}
	Let  $M_{\chi} = \sum_{k = 0}^{\infty} \exp\left(   \frac{-\kappa^2}{9C_2 c_0^2 T \eta_k} \right)$, and $\alpha_{ub} = \frac{c_0}{\sqrt{1+ \log(2M_{\chi}/\varepsilon)}}$. Then  Assumption \ref{Assumption_stochastic} illustrates that $M_{\chi} < \infty$, and for any $c \in (0, \alpha_{ub})$, it holds that 
	\begin{equation}
		\begin{aligned}
			&\sum_{k = 0}^{\infty} C_1 \exp\left(   \frac{-(\frac{\kappa}{3})^2}{C_2 c^2T \eta_k} \right) = \sum_{k = 0}^{\infty} C_1 \exp\left(   \frac{-\kappa^2}{9C_2 c_0^2T \eta_k} \cdot \frac{c_0^2}{c^2} \right) = C_1 \sum_{k = 0}^{\infty} \left(\exp\left(   \frac{-\kappa^2}{9C_2 c_0^2T \eta_k}  \right)\right)^{ \frac{c_0^2}{c^2}}\\
			\leq{}&  C_1 \sum_{k = 0}^{\infty} \exp\left(   \frac{-\kappa^2}{9C_2 c_0^2T \eta_k}  \right) \cdot \left(\frac{1}{10} \right)^{ \frac{c_0^2}{c^2}-1} \leq  M_{\chi}\cdot \left(\frac{1}{10} \right)^{ \frac{c_0^2}{c^2}-1} \leq \varepsilon. 
		\end{aligned}
	\end{equation}
	
	Therefore, from Proposition \ref{Prop_beniam_bounded}, for any $l\in \bb{N}_+$, we have
	\begin{equation}
		\sum_{l = 0}^{\infty} \bb{P}\left(  \sup_{\Lambda_c(lT) \leq i\leq \Lambda_c(lT+T) }  \norm{ \sum_{k = \Lambda_c(lT)}^i c\eta_k \xi_{r, k+1} } \geq \frac{\kappa}{3} \right) \leq \varepsilon. 
	\end{equation}
	This implies 
	\begin{equation}
		\label{Eq_Prop_stable_highprob_0}
		\bb{P}\left(  \left\{ \exists l \in \bb{N}_+, \text{ such that } \sup_{\Lambda_c(lT) \leq i\leq \Lambda_c(lT+T) }  \norm{ \sum_{k = \Lambda_c(lT)}^i c \eta_k \xi_{r, k+1} } \geq \frac{\kappa}{3}  \right\} \right) \leq \varepsilon. 
	\end{equation}
	
	Notice that for any $l \in \bb{N}_+$ and any $s \in [\Lambda_c(lT), \Lambda_c(lT+T)]$, it holds that 
	\begin{equation}
		\begin{aligned}
			&\sup_{s \leq i\leq \Lambda_c(\lambda_c(s) + T) }  \norm{ \sum_{k = s}^i c \eta_k \xi_{r, k+1} } \\
			\leq{}& 2    \sup_{\Lambda_c(lT) \leq i\leq \Lambda_c(lT+T) }  
			\norm{ \sum_{k =\Lambda_c(lT)}^i c \eta_k \xi_{r, k+1} } + \sup_{\Lambda_c((l+1)T) \leq i\leq \Lambda_c((l+1)T +T) }  \norm{ \sum_{k = \Lambda_c((l+1)T)}^i c \eta_k \xi_{r, k+1} }.
		\end{aligned}
	\end{equation}
	Then together with \eqref{Eq_Prop_stable_highprob_0}, we can conclude that 
	\begin{equation}
		\begin{aligned}
			&\bb{P}\left(  \left\{ \exists s \geq 0, \text{such that } \sup_{s \leq i\leq \Lambda_c(\lambda_c(s) + T) }  \norm{ \sum_{k = s}^i c \eta_k \xi_{r, k+1} } \geq \kappa  \right\} \right) \\
			\leq{}& \bb{P}\left(  \left\{ \exists l \in \bb{N}_+, \text{ such that } \sup_{\Lambda_c(lT) \leq i\leq \Lambda_c(lT+T) }  \norm{ \sum_{k = \Lambda_c(lT)}^i c \eta_k \xi_{r, k+1} } \geq \frac{\kappa}{3}  \right\} \right)\\
			\leq{}& \varepsilon. 
		\end{aligned}
	\end{equation}
	This completes the proof. 
\end{proof}

By combining Theorem \ref{Theo_abstract_stability} and Proposition \ref{Prop_beniam_bounded}, the following theorem illustrates that with sufficiently small stepsizes, the iterates generated by \eqref{Eq_stable_random} is uniformly bounded with high probability. 
\begin{theo}
	\label{Theo_stable_stochastic}
	Suppose Assumption \ref{Assumption_f_definable} and Assumption \ref{Assumption_stochastic} hold, let $\X_0$ be any compact subset of $\Rn$ and $x_0 \in \X_0$ in \eqref{Eq_stable_random}.   Then for any $\varepsilon>0$, there exist $\alpha, \tilde{r} > 0$ such that for any $c  \in (0, \alpha)$ and any $\alpha$-uniformly-bounded sequence  $\{\delta_k\}$, 
	it holds for any sequence $\{\xk\}$ generated by \eqref{Eq_stable_random} that 
	\begin{equation}
		\bb{P}\left( \tau_{\tilde{r}} = \infty  \right) \geq 1-\varepsilon. 
	\end{equation}
\end{theo}
\begin{proof}
	From Theorem \ref{Theo_abstract_stability}, for any given $\varepsilon> 0$, there exists $\alpha_{ub} > 0$, $T_{ub}>0$, 
	$\tilde{r} > 0$ such that for any $c, \delta \leq \alpha_{ub}$, and any   $(\alpha_{ub}, T_{ub}, \{c\eta_k\})$-controlled sequence $\{\xi_{r, k}\}$, 
	the sequence $\{\xk\}$ generated by \eqref{Eq_stable_iterate} with $x_0 \in \X_0$ is restricted in $\ca{L}_{\tilde{r}}$. 
	
	Using Proposition \ref{Prop_stable_highprob},  there exists $\alpha_1 \in (0, \alpha_{ub})$ such that for any $c \leq \alpha_1$, the sequence $\{\chi(x_{k\land \tau_{\tilde{r}}}, \zeta_{k+1}) \mathbbm{1}_{\tau_{\tilde{r}} \geq k}\}$ is controlled by $(\alpha_{ub}, T_{ub}, \{c\eta_k\})$ with probability at least $1-\varepsilon$.

	Let   $\Omega_1$ be the event where sequence $\{\chi(x_{k\land \tau_{\tilde{r}}}, \zeta_{k+1}) \mathbbm{1}_{\tau_{\tilde{r}} \geq k}\}$ is controlled by $(\alpha_{ub}, T_{ub}, \{c\eta_k\})$. Then Proposition \ref{Prop_stable_highprob} illustrates that $\bb{P}(\Omega_1) \geq 1-\varepsilon$.  Moreover, almost surely over $\Omega_1$, for any $c, \delta \leq \alpha_1$, Theorem \ref{Theo_abstract_stability} illustrates that the sequence $\{x_{k\land \tau_r}\}$ lies in the region $\ca{L}_{\tilde{r}}$. Therefore, from the definition of the stopping time $\tau_{\tilde{r}}$, we can conclude that for almost every $\omega \in \Omega_1$, $\omega \in \{\tau_{\tilde{r}} = \infty\}$.  As a result, we can conclude that $\bb{P}\left( \tau_r = \infty  \right) \geq \bb{P}(\Omega_1) \geq 1-\varepsilon$. 
	This completes the proof. 
\end{proof}

Now we present the proof of Theorem \ref{Theo_stable_stochastic_main} based on the results presented in Theorem \ref{Theo_abstract_stability} and \cite[Proposition 3.33]{benaim2005stochastic}. 
\begin{proof}[Proof of Theorem \ref{Theo_stable_stochastic_main}]
	From Theorem \ref{Theo_stable_stochastic}, for any compact subset $\X_0 \subset \Rn$ and any $\varepsilon \in (0,1)$, there exist $\alpha, r > 0$ such that for any $c \in (0,\alpha)$, it holds that $\bb{P}\left( \left\{\tau_r = \infty \right\} \right) \geq 1-\varepsilon$.

	Moreover, the uniform boundedness of $\{\xk\}$ implies the uniform boundedness of $\{\chi(x_{k\land \tau_r}, \zeta_{k+1}) \mathbbm{1}_{\tau_r > k}\}$. Then together with Lemma \ref{Le_stoppingtime_UB} and  \cite[Proposition 1.3]{benaim2005stochastic}, we have that 
	\begin{equation}
		\lim_{s\to \infty} \sup_{s \leq i\leq \Lambda_c(\lambda_c(s) + T) }  \norm{ \sum_{k = s}^i c\eta_k \chi(x_{k\land \tau_r}, \zeta_{k+1}) \mathbbm{1}_{\tau_r > k}} = 0
	\end{equation}
	holds almost surely for any $T > 0$. As a result, almost surely over the event $\left\{\tau_r = \infty \right\}$, we have
	\begin{equation}
		\label{Eq_Theo_convergence_main_0}
		\lim_{s\to \infty} \sup_{s \leq i\leq \Lambda_c(\lambda_c(s) + T) }  \norm{ \sum_{k = s}^i c\eta_k \chi(\xk, \zeta_{k+1}) } = 0
	\end{equation}
	holds for any $T>0$. 
	
	Notice that almost surely over the event $\left\{\tau_r = \infty \right\}$, the update scheme of $\{\xk\}$ becomes 
	\begin{equation}
		\xkp \in \xk - c\eta_k\left( \caH^{\delta_k}(\xk) + \chi(\xk, \zeta_{k+1})  \right). 
	\end{equation}
	Then by combining \eqref{Eq_Theo_convergence_main_0},  and \cite[Theorem 3.6, Remark 1.5(ii), Proposition 3.27]{benaim2005stochastic}, we can conclude that,  almost surely over the event $\left\{\tau_r = \infty \right\}$,  any cluster point of $\{\xk\}$ lies in $\A$ and the sequence $\{\Psi(\xk)\}$ converges, and $\bb{P}\left( \left\{\tau_r = \infty \right\} \right) \geq 1-\varepsilon$.  This completes the proof. 
\end{proof}


    

\section{Developing Stochastic Subgradient Methods with Convergence Guarantees}

\subsection{SGD-type Methods with Convergence Guarantees}
\label{Section_developing_GSGD}
In this subsection,  we demonstrate that by specific choices of the auxiliary function $\phi$, the framework \eqref{Eq_Framework} yields variants of several well-known SGD-type methods, including heavy-ball SGD, Lion, SignSGD, and normalized SGD. More importantly, following the results on the global stability of the framework \eqref{Eq_Framework},  our developed SGD-type methods directly enjoy convergence guarantees in the training of nonsmooth neural networks. Table \ref{Table_explaining_SGD} provides a brief overview of the results, while a comprehensive discussion is presented in the ensuing subsections.

\begin{table}[htbp]
	\centering
	\begin{tabular}{c|ccc}
		\hline
		Variants of SGD-type methods & Update scheme & $\phi(m)$ & $\partial \phi(m)$  \\ \hline
		Heavy-ball SGD & \eqref{Eq_Implementation_SGDW} & $\frac{1}{2} \norm{m}^2$ & $m$ \\
		SignSGD & \eqref{Eq_Implementation_SignSGD} & $\norm{m}_1$ & $\mathrm{sign}(m)$ \\
		Normalized SGD & \eqref{Eq_Implementation_NormalSGD} & $\norm{m}$ & $\mathrm{regu}(m)$ \\ 
		ClipSGD & \eqref{Eq_Implementation_ClipSGD} &  \eqref{Eq_ClipSGD_0} & $\mathrm{clip}_C(m)$ \\\hline
	\end{tabular}
	\caption{A brief summary of variants of several well-known SGD-type methods by specific choices of the auxiliary function $\phi$ in the framework \eqref{Eq_Framework}.}
	\label{Table_explaining_SGD}
\end{table}

\subsubsection{Heavy-ball SGD}
When we choose the auxiliary mapping $\phi$ as $\phi(m) = \frac{1}{2} \norm{m}^2$, the framework \eqref{Eq_Framework} becomes
\begin{equation}
	\label{Eq_Implementation_SGDW}
	\left\{
	\begin{aligned}
		g_k \in{}& \D_{f_{i_k}}(\xk), \\
		\mkp ={}& \mk + \tau \eta_k( g_k - \mk),\\
		\xkp ={}& \xk - \eta_k ( \mkp  +\rho g_k).
	\end{aligned}
	\right.
\end{equation}
Then the method in \eqref{Eq_Implementation_SGDW} is exactly the heavy-ball SGD \cite{polyak1964some} with Nesterov momentum \cite{nesterov2013gradient}, whose convergence properties on minimizing nonsmooth definable functions are established in \cite{ruszczynski2020convergence,le2023nonsmooth,josz2023global}.  Therefore, by choosing $\phi(m) = \frac{1}{2} \norm{m}^2$, we can explain the convergence properties of the heavy-ball SGD method, which aligns with the existing results in  \cite{ruszczynski2020convergence,le2023nonsmooth,josz2023global}.

\subsubsection{SignSGD}
To develop efficient variants of the SGD method, SignSGD \cite{bernstein2018signsgd} emerges as a notable alternative, which employs the sign function to normalize the update direction. The SignSGD method iterates by the following schemes, 
\begin{equation}
	\label{Eq_Example_SignSGD}
	\begin{aligned}
		\xkp \in{}& \xk - \eta_k  \mathrm{sign}(g_k )  .
	\end{aligned}
\end{equation}
However, the SignSGD method in \eqref{Eq_Example_NormalSGD} does not have any convergence guarantee in solving stochastic optimization problems in general, even if we assume the differentiability of the objective functions. Furthermore, as shown later in Remark \ref{Rmk_counter_example_SignSGD}, the SignSGD method in \eqref{Eq_Example_SignSGD} fails to converge on nonsmooth optimization problems, even in deterministic settings. Therefore, although the numerical experiments exhibit the high efficiency of the SignSGD method and its variants (e.g., Lion \cite{chen2023symbolic}), their convergence properties in training nonsmooth neural networks are largely unexplored.

Motivated by the SignSGD method \cite{bernstein2018signsgd} and Lion method \cite{chen2023symbolic}, we consider choosing $\phi(m) = \norm{m}_1$ in the framework \eqref{Eq_Framework}, which yields the following sign-map regularized momentum SGD method (SRSGD), 
\begin{equation}
	\tag{SRSGD}
	\left\{
	\begin{aligned}
		g_k \in{}& \D_{f_{i_k}}(\xk), \\
		\mkp ={}& \mk + \tau \eta_k( g_k - \mk),\\
		\xkp \in{}& \xk - \eta_k ( \mathrm{sign}(\mkp) +\rho g_k).
	\end{aligned}
	\right.
\end{equation}
Compared with the SignSGD method, we can conclude that the method in \eqref{Eq_Implementation_SignSGD} is a variant of the SignSGD method, where we introduce the momentum terms for acceleration. More importantly, our proposed method in \eqref{Eq_Implementation_SignSGD} enjoys theoretical guarantees in nonsmooth optimization, especially in training nonsmooth neural networks.

\begin{rmk}
	\label{Rmk_counter_example_SignSGD}
	It is important to note that while the set-valued mapping $(\partial \phi) \circ \D_f$ is convex-valued for heavy-ball SGD methods, it may not be convex-valued for arbitrary choices of $\phi$ satisfying Assumption \ref{Assumption_framework}(1).  Consequently, the SignSGD method in \eqref{Eq_Example_SignSGD} might fail to converge even in deterministic settings.

	In the following, we present a simple counterexample that illustrates the non-convergence properties of the SignSGD method in nonsmooth deterministic settings, which employs the update scheme for minimizing a function $f$ with $\D_f = \partial f$, 
	\begin{equation}
		\label{Eq_Rmk_counter_example_1}
		\xkp \in \xk - \eta_k \mathrm{sign}(\partial f(\xk)).
	\end{equation}
	More precisely, we aim to show that for a special objective function $f$ and a particular initial point, there exists a sequence $\{\xk\}$ generated by \eqref{Eq_Rmk_counter_example_1} that is uniformly bounded but cannot converge to any Clarke stationary point of $f$.

	Consider the function $f$ in $\bb{R}^2$ defined as $f(u, v) = |2u + v| + |u+10|$. The Clarke stationary point of $f$ is $(-10, 20)$. However, notice that $(0,0) \in \mathrm{conv} \left(\mathrm{sign}(\partial f(0, 0)) \right)$. Hence some trajectories of the differential inclusion $\frac{\mathrm{d}x}{\mathrm{d}t} \in -\mathrm{sign}(\partial f(x))$ may fail to converge to Clarke stationary points of $f$. 
	
	Now we aim to construct the counter-example. 
	It can be easily verified that for any $\varepsilon \in [-5, 5]$, we have $(1,1) \in \mathrm{sign}(\partial f(\varepsilon, \varepsilon))$ and $(-1,-1) \in \mathrm{sign}(\partial f(-\varepsilon, -\varepsilon))$.
	Now, we choose a function $\chi: \bb{R}^2 \to \bb{R}^2$ such that $\chi(u, v) \in \mathrm{sign}(\partial f(u, v))$ for any $(u, v) \in \bb{R}^2$, and $\chi(0,0) = (1,1)\in \mathrm{sign}(\partial f(0, 0))$. 
	Then for any $(u_0, v_0) \in \bb{R}^2$ with $u_0 = v_0 = \varepsilon_0$, for some $\varepsilon_0 \in (0, \frac{1}{3})$, let $\{(u_k, v_k)\}$ be the sequence generated by
	\begin{equation}
		(u_{k+1}, v_{k+1}) \in (u_k, v_k) - \eta_k \chi(u_k, v_k),
	\end{equation}
	where $\eta_k \leq \frac{1}{3}$. It is easy to verify that the sequence $\{(u_k, v_k)\}$ follows the update scheme in \eqref{Eq_Rmk_counter_example_1}. 
	Moreover, with $(d_{1,k}, d_{2, k}) = \chi(u_k, v_k)$, it can be easily verified that $d_{1,k} = d_{2, k}$ holds for all $k\geq 0$, and $\sup_{k\geq 0} |u_k| \leq 1$.
	As a result, for any $k\geq 0$, we have $u_k = v_k$, implying that $\{(u_k, v_k)\}$ lies on the line segment $\{(u, v): u-v = 0, |u| \leq 1\}$, hence any cluster point of $\{(u_k, v_k)\}$ is not a Clarke stationary point of $f$. 
	
\end{rmk}

\begin{rmk}
    For the well-recognized Lion method \cite{chen2023symbolic}, its update scheme can be reformulated as 
    \begin{equation}
        \label{Eq_Example_Lion}
        \left\{
        \begin{aligned}
            g_k \in{}& \D_{f_{i_k}}(\xk), \\
		\mkp ={}& \mk + \tau_1 \eta_k( g_k - \mk),\\
		\xkp \in{}& \xk - \eta_k ( \mathrm{sign}( (1-\tau_2\eta_k) \mk + \tau_2 \eta_k g_k)  ).
        \end{aligned}
        \right.
    \end{equation}
    Here $\tau_1, \tau_2 > 0$ are the parameters for the momentum terms in \eqref{Eq_Example_Lion}. Then it is easy to verify that the scheme \eqref{Eq_Example_Lion} fits into \eqref{Eq_Framework} with $\phi(m) = \norm{m}_1$. Therefore, the results in Theorem \ref{Theo_convergence_diminishing}, Theorem \ref{Theo_convergence_main_sto}, and Theorem \ref{Theo_AS_avoid} provide theoretical guarantees for the scheme \eqref{Eq_Example_Lion} in minimizing nonsmooth nonconvex functions, which further explain its convergence in the training of nonsmooth neural networks.  
\end{rmk}

\subsubsection{Normalized SGD}

The normalized SGD method, proposed by \cite{you2017large,you2019large} is a popular variant of SGD-type methods and exhibits its high efficiency in training deep neural networks. Let $\ca{I}_1,..,\ca{I}_{\tilde{N}}$ be non-intersecting subsets of $[n]$, and $\cup_{i \in [\tilde{N}]} \ca{I}_i= [n]$. In addition, for any $x \in \Rn$ and any $\ca{I} \subseteq [n]$, we define $(x)_{\ca{I}} := \{y\in \Rn: \text{$(y)_i = (x)_i$ for any $i \in \ca{I}$, otherwise, $(y)_i = 0$}\}$. 

The normalized SGD method in \cite{you2017large,you2019large} employs the following update scheme, 
\begin{equation}
	\label{Eq_Example_NormalSGD}
	(\xkp)_{\ca{I}_j} \in (\xk)_{\ca{I}_j} - \eta_k \mathrm{regu}( (g_k)_{\ca{I}_j} ), \qquad \forall j \in [\tilde{N}]. 
\end{equation}
Notice that when we choose $\tilde{N} = n$, the normalized SGD recovers the SignSGD method in \eqref{Eq_Example_SignSGD}. Therefore, there exists a nonsmooth convex function such that the normalized SGD method may fail to converge, as demonstrated in Remark \ref{Rmk_counter_example_SignSGD}. In the presence of that counter-example, the normalized SGD method lacks firm convergence guarantees in training nonsmooth neural networks. 

In this part, motivated by the normalized SGD method in \eqref{Eq_Example_NormalSGD}, we consider the following specific instantiation of the framework \eqref{Eq_Framework} by selecting $\phi(m) = \sum_{j = 1}^{\tilde{N}} \norm{(m)_{\ca{I}_j}}$,  
\begin{equation}
	\label{Eq_Implementation_NormalSGD}
	\left\{
	\begin{aligned}
		g_k \in{}& \D_{f_{i_k}}(\xk), \\
		\mkp ={}& \mk + \tau \eta_k( g_k - \mk),\\
		(\xkp)_{\ca{I}_j} \in{}& (\xk)_{\ca{I}_j} - \eta_k \left(\mathrm{regu}( (\mkp)_{\ca{I}_j} ) + \rho(g_k)_{\ca{I}_j} \right), \qquad \forall j \in [\tilde{N}]. \\
	\end{aligned}
	\right.
\end{equation}
The update scheme \eqref{Eq_Implementation_NormalSGD} is similar to the update scheme of the normalized SGD method \cite{you2017large,you2019large}. Hence \eqref{Eq_Implementation_NormalSGD} can be regarded as a momentum-accelerated version for the normalized SGD method proposed by \cite{you2017large,you2019large}. Moreover, our convergence results presented in Theorem \ref{Theo_convergence_main} and Theorem \ref{Theo_AS_avoid} provide theoretical guarantees for the momentum normalized SGD method in \eqref{Eq_Implementation_NormalSGD}, especially in the training of nonsmooth neural networks. 

\subsubsection{ClipSGD}

Some recent works  \cite{zhang2020adaptive,zhang2020improved} employ the clipping operator to avoid extreme values in the update direction of the SGD method to stabilize the training process, which yields the following SGD method with clipping (ClipSGD),
\begin{equation}
	\label{Eq_Example_ClipSGD}
	\xkp = \xk - \eta_k \mathrm{clip}_{C}(g_k).
\end{equation}
However, most of the existing works for ClipSGD  \cite{zhang2020adaptive,zhang2020improved} focus on its convergence in minimizing continuously differentiable functions. The convergence guarantees for the ClipSGD method in \eqref{Eq_Example_ClipSGD} are limited,  especially in training nonsmooth neural networks.  To the best of our knowledge, only \cite{xiao2023adam} investigated the convergence of the momentum-accelerated ClipSGD in training nonsmooth neural networks. Their analysis requires the assumption of the uniform boundedness of the iterates, and the clip parameter $C$ should increase to infinity. 

Motivated by the ClipSGD method in \eqref{Eq_Example_ClipSGD}, we consider to choose $\phi$ as the  Huber loss function defined by, 
\begin{equation}
	\label{Eq_ClipSGD_0}
	\phi(x) = \sum_{i = 1}^n \tilde{s}(x_i), \quad \text{where} \quad 
	\tilde{s}(x) = \begin{cases}
		\frac{1}{2} x^2 & \text{when } |x| \leq C,\\
		C|x| - \frac{1}{2} C^2& \text{when }|x| > C.
	\end{cases}
\end{equation}
Such a choice of the auxiliary function $\phi$ in \eqref{Eq_Framework} corresponds to the following SGD-type method,
\begin{equation}
	\label{Eq_Implementation_ClipSGD}
	\left\{
	\begin{aligned}
		g_k \in{}& \D_{f_{i_k}}(\xk), \\
		\mkp ={}& \mk + \tau \eta_k( g_k - \mk),\\
		\xkp ={}& \xk - \eta_k ( \mathrm{clip}_C(\mkp) +\rho g_k).
	\end{aligned}
	\right.
\end{equation}
The method in \eqref{Eq_Implementation_ClipSGD} can be viewed as a momentum-accelerated version of the ClipSGD method. Based on the convergence properties of the framework \eqref{Eq_Framework} established in Section 4, we can easily conclude that \eqref{Eq_Implementation_ClipSGD} converges in training nonsmooth neural networks. Therefore, our results, when applied to analyze the ClipSGD method in \eqref{Eq_Implementation_ClipSGD}, extend the existing results in \cite{zhang2020adaptive,zhang2020improved}, and provide theoretical guarantees for applying the ClipSGD method in the training of nonsmooth neural networks.

\subsection{ADAM-family methods with convergence guarantees}
\label{Section_developing_ADAM}
In this subsection,  we demonstrate that by specifying the choices of the set-valued mapping $\ca{V}$, the framework \eqref{Eq_Framework_ADAM} yields variants of several well-known ADAM-family methods, including ADAM, NADAM, and AdaBelief. Moreover, these ADAM-family methods inherit convergence guarantees from the results established in Theorem \ref{Theo_convergence_ADAM_reshuffling}, Theorem \ref{Theo_convergence_ADAM_reshuffling}, and Theorem \ref{Theo_AS_avoid_ADAM}.

\subsubsection{ADAM}
The ADAM method, introduced by \cite{kingma2014adam}, has emerged as one of the most widely adopted stochastic subgradient methods in the training of neural networks. Motivated by the ADAM method investigated in \cite{barakat2021convergence,xiao2023adam}, we consider the following scheme,
\begin{equation}
	\label{Eq_Implementation_ADAM}
	\left\{
	\begin{aligned}
		g_k \in{}& \D_{f_{i_k}}(\xk), \\
		\mkp ={}& \mk + \tau_1 \eta_k( g_k - \mk),\\
            \vkp ={}& \vk + \tau_2 \eta_k( g_k \odot g_k - \vk),\\
		\xkp ={}& \xk - \eta_k  (\ca{P}_{+}(\vkp) + \varepsilon_0)^{-\frac{1}{2}} \odot (\mkp + \rho g_k).
	\end{aligned}
	\right.
\end{equation}
Here $\rho \geq 0$ refers to the parameter for Nesterov momentum. When $\rho = 0$, the scheme \eqref{Eq_Implementation_ADAM} corresponds to the ADAM method. Conversely, when $\rho > 0$, it aligns with the NADAM method proposed by \cite{dozat2016incorporating}.

It is easy to verify that the scheme \eqref{Eq_Implementation_ADAM} fits into the scheme \eqref{Eq_Framework_ADAM} by choosing the set-valued mapping $\ca{V}$ in \eqref{Eq_Framework_ADAM} as 
\begin{equation*}
    \ca{V}(x, m) = \frac{1}{N} \sum_{i = 1}^N  \{d\odot d: d \in \D_{f_i}(x)\}.
\end{equation*}
Notice that the set-valued mapping $\ca{V}$ is locally bounded and graph-closed. Moreover, for any $(x, m) \in \Rn \times \Rn$, it holds that $\ca{V}(x, m) \subset \Rn_+$. Therefore, our convergence results presented in Theorem \ref{Theo_convergence_ADAM_reshuffling}, Theorem \ref{Theo_convergence_ADAM_reshuffling}, and Theorem \ref{Theo_AS_avoid_ADAM} directly provide theoretical guarantees for  \eqref{Eq_Implementation_ADAM}, especially in the training of nonsmooth neural networks.

\subsubsection{AdaBelief}
The AdaBelief method, proposed by \cite{zhuang2020adabelief}, is a popular variant of the ADAM-family method, demonstrating high efficiency in the training of deep neural networks. Motivated by the AdaBelief method presented in \cite{dozat2016incorporating,xiao2023adam}, we consider the following scheme for AdaBelief method,
\begin{equation}
	\label{Eq_Implementation_AdaBelief}
	\left\{
	\begin{aligned}
		g_k \in{}& \D_{f_{i_k}}(\xk), \\
		\mkp ={}& \mk + \tau_1 \eta_k( g_k - \mk),\\
            \vkp ={}& \vk + \tau_2 \eta_k( (g_k-\mkp) \odot (g_k-\mkp) - \vk),\\
		\xkp ={}& \xk - \eta_k  (\ca{P}_{+}(\vkp) + \varepsilon_0)^{-\frac{1}{2}} \odot (\mkp + \rho g_k).
	\end{aligned}
	\right.
\end{equation}
Then the AdaBelief method in \eqref{Eq_Implementation_AdaBelief} fits into the scheme \eqref{Eq_Framework_ADAM} with 
\begin{equation*}
    \ca{V}(x, m) = \frac{1}{N} \sum_{i = 1}^N  \{ (d-m)\odot (d-m): d \in \D_{f_i}(x)\},
\end{equation*}
which is locally bounded, graph-closed, and lies in $\Rn_+$ for all $(x, m) \in \Rn \times \Rn$. Consequently, the convergence results established in Theorems \ref{Theo_convergence_ADAM_reshuffling}, \ref{Theo_convergence_ADAM_reshuffling}, and \ref{Theo_AS_avoid_ADAM} can be directly applied to establish the convergence properties of the scheme \eqref{Eq_Implementation_AdaBelief}.

\section{Numerical Experiments}
\label{Section_numerical}
In this section, we aim to show that our proposed scheme \eqref{Eq_Framework} can yield efficient stochastic subgradient methods for training nonsmooth neural networks. By choosing the auxiliary function $\phi(m) = \norm{m}_1$, we consider the following sign-map regularized momentum SGD method (SRSGD),
\begin{equation}
	\label{Eq_Implementation_SignSGD}
	\tag{SRSGD}
	\left\{
	\begin{aligned}
		g_k \in{}& \D_{f_{i_k}}(\xk), \\
		\mkp ={}& \mk + \tau \eta_k( g_k - \mk),\\
		\xkp \in{}& \xk - \eta_k ( \mathrm{sign}(\mkp) +\rho g_k).
	\end{aligned}
	\right.
\end{equation}
Although the formulation of the SRSGD is similar to the SIGNUM developed in \cite{bernstein2018signsgd}, these two methods differ in the choice of the stepsizes for $\{\xk\}$ and $\{\mk\}$. In SIGNUM, the stepsizes for $\{\mk\}$ are fixed as a constant while the stepsizes for $\{\xk\}$ are diminishing. On the other hand, the ratios of the stepsizes for $\{\mk\}$ and those for $\{\xk\}$ are fixed as a constant in SRSGD. As a result, the existing results on the convergence of SIGNUM is restricted to differentiable cases, while the convergence of SRSGD enjoys convergence guarantees in minimizing nonsmooth nonconvex path-differentiable functions.

We evaluate the numerical performance of the SRSGD by comparing it with previously released state-of-the-art methods, including Lion (\url{https://github.com/google/automl}) \cite{chen2023symbolic}, Adam \cite{kingma2014adam}, and AdamW \cite{loshchilov2017decoupled}. In our empirical analysis, we specifically investigate the efficiency of the SRSGD method in training ResNet-50  \citep{he2016deep} for image classification tasks on the CIFAR-10 and CIFAR-100 datasets \citep{krizhevsky2009learning}.  It is worth mentioning that the ResNet-50 neural network employs ReLU as its activation function, hence the loss functions in our numerical experiments are non-Clarke-regular but definable. Moreover, we maintained a batch size of $128$ and a fixed weight decay parameter of $10^{-4}$ for consistency across different test cases. All numerical experiments in this section are conducted on a server equipped with an Intel Xeon 6342 CPU and 4 NVIDIA GeForce RTX 3090 GPUs, running Python 3.8 and PyTorch 1.9.0.

\begin{figure}[!tbp]
	\centering
	\subfigure[Test accuracy, CIFAR-10]{
		\begin{minipage}[t]{0.33\linewidth}
			\centering
			\includegraphics[width=\linewidth]{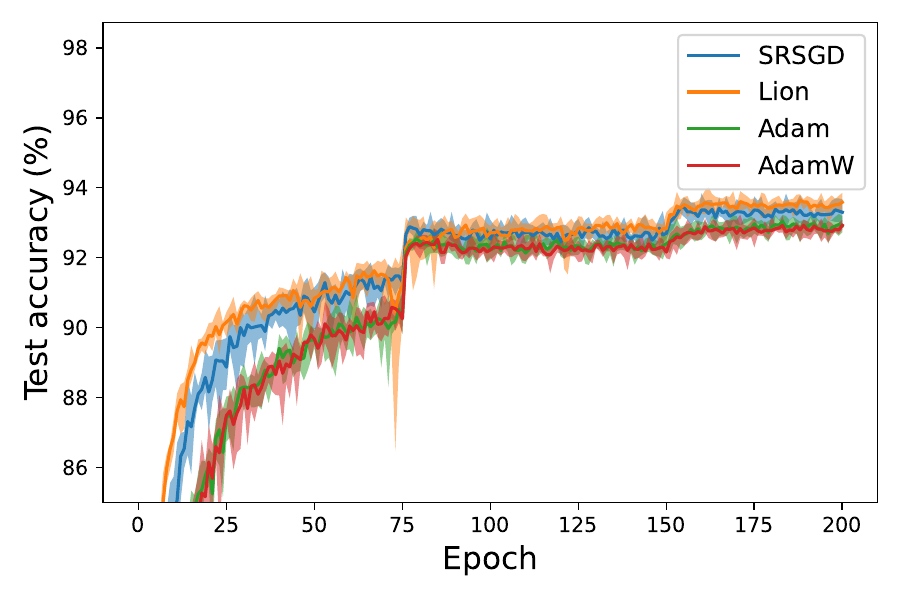}
			\label{Fig:Fig_Test2_Cifar10_Lion_acc}
		\end{minipage}%
	}%
	\subfigure[Test loss, CIFAR-10]{
		\begin{minipage}[t]{0.33\linewidth}
			\centering
			\includegraphics[width=\linewidth]{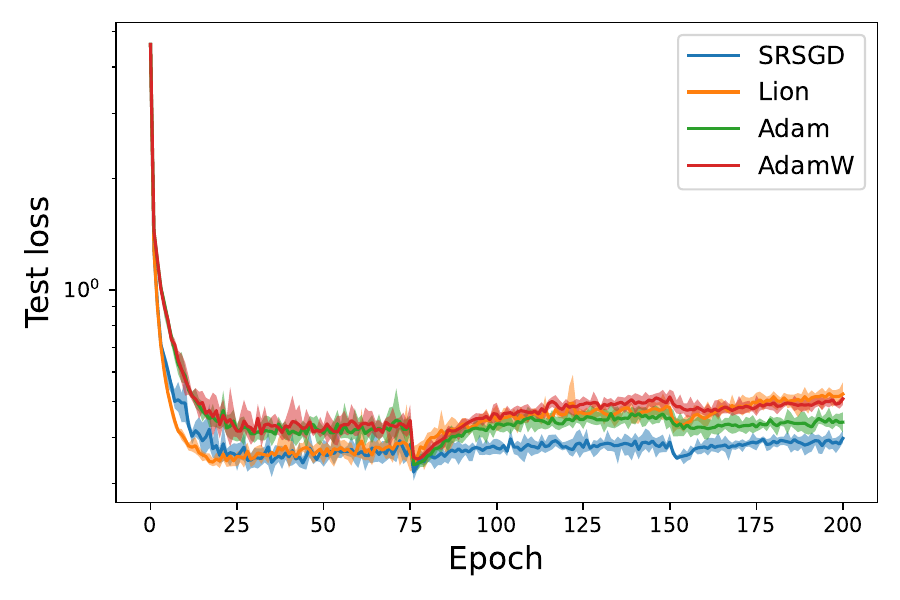}
			\label{Fig:Fig_Test2_Cifar10_Lion_testloss}
		\end{minipage}%
	}%
	\subfigure[Training loss, CIFAR-10]{
		\begin{minipage}[t]{0.33\linewidth}
			\centering
			\includegraphics[width=\linewidth]{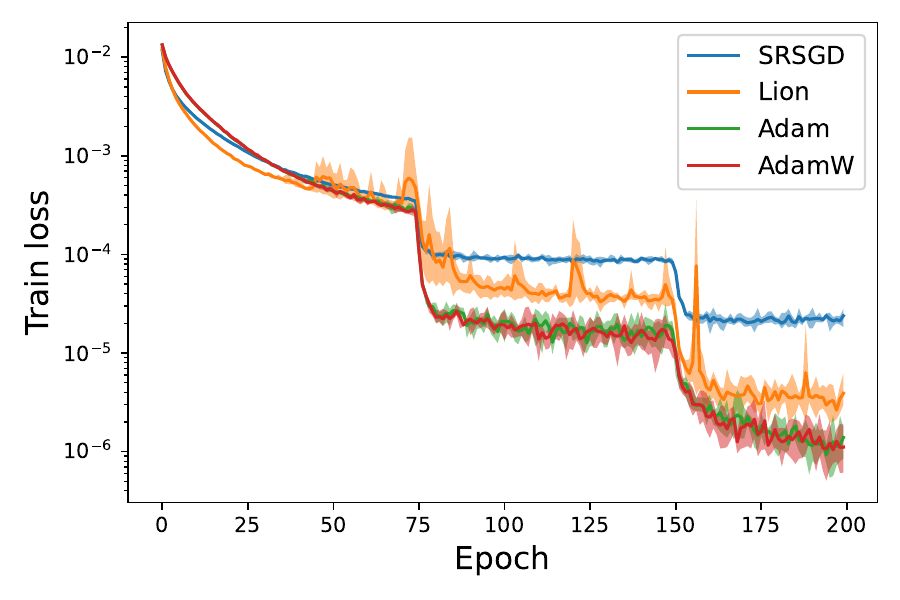}
			\label{Fig:Fig_Test2_Cifar10_Lion_trainloss}
		\end{minipage}%
	}%
	
	\subfigure[Test accuracy, CIFAR-100]{
		\begin{minipage}[t]{0.33\linewidth}
			\centering
			\includegraphics[width=\linewidth]{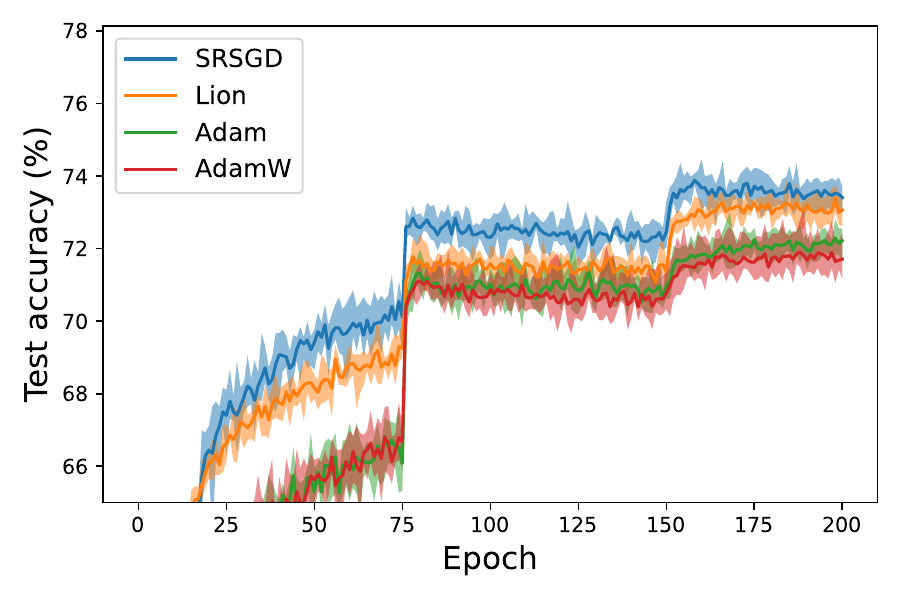}
			\label{Fig:Fig_Test2_Cifar100_Lion_acc}
		\end{minipage}%
	}%
	\subfigure[Test loss, CIFAR-100]{
		\begin{minipage}[t]{0.33\linewidth}
			\centering
			\includegraphics[width=\linewidth]{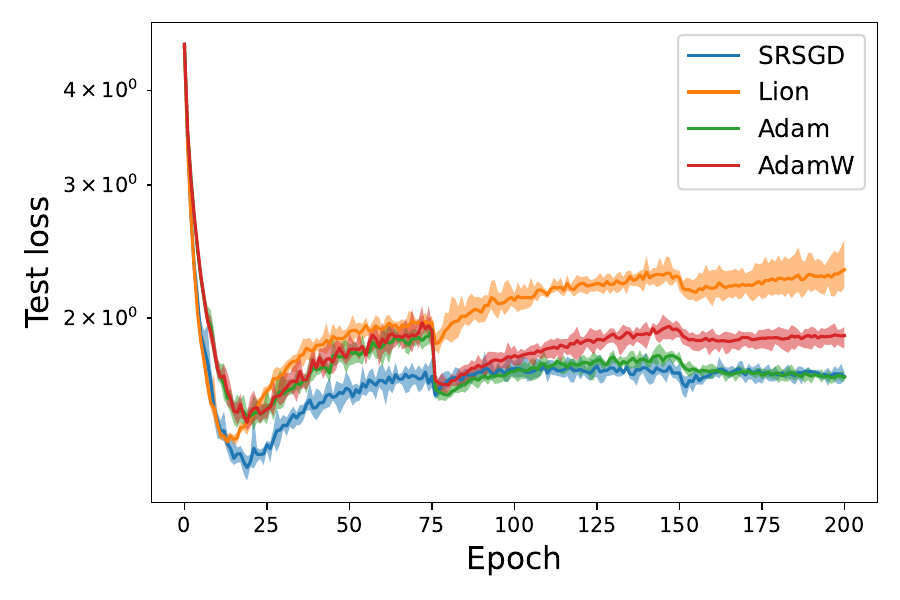}
			\label{Fig:Fig_Test2_Cifar100_Lion_testloss}
		\end{minipage}%
	}%
	\subfigure[Training loss, CIFAR-100]{
		\begin{minipage}[t]{0.33\linewidth}
			\centering
			\includegraphics[width=\linewidth]{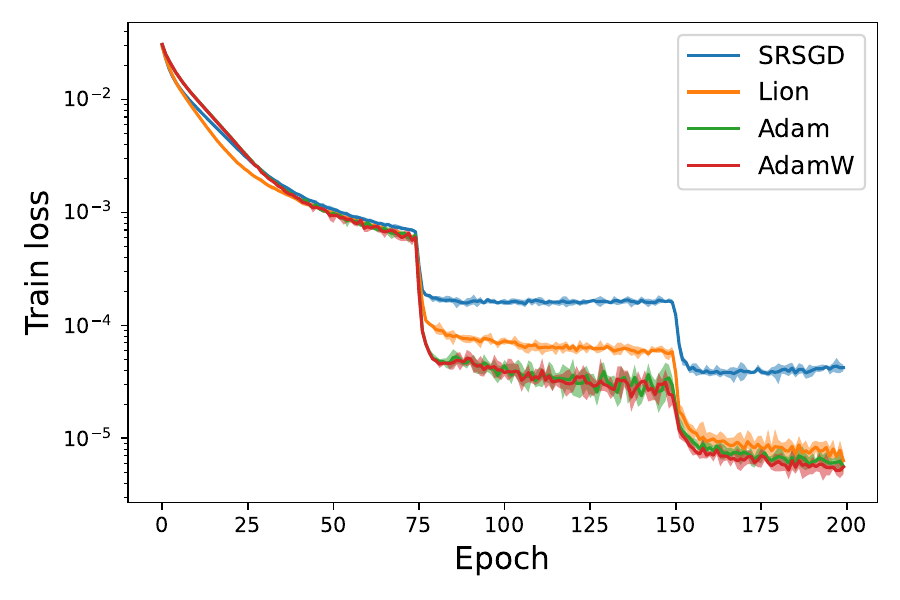}
			\label{Fig:Fig_Test2_Cifar100_Lion_trainloss}
		\end{minipage}%
	}%
	\caption{Test results on the performance of all the compared methods for training ResNet50 on CIFAR-10 and CIFAR-100 datasets. }
	\label{Fig_Test_Lion}
\end{figure}

In our numerical experiments, we test the performance of all the compared methods for training ResNet-50 \citep{he2016deep} on image classification tasks using the CIFAR-10 and CIFAR-100 datasets \citep{krizhevsky2009learning}. With the initial stepsize $\eta_0$,  we decay the stepsizes by a factor of $0.2$ at the $75$-th and $150$-th epoch for all the methods under comparison. Moreover, we use a grid search to find the best combination of $\eta_0$ and $\tau = \frac{1-\beta_1}{\eta_0}$ from $\eta_0 \in \{5 \times 10^{-4}, 1\times 10^{-4}, 5\times 10^{-5}, 1\times 10^{-5}\}$ and $\beta_1 \in \{0.8, 0.9, 0.95, 0.99, 0.999\}$, respectively. In addition, for Lion and Adam, the parameter $\beta_2$ is set to the recommended values of $0.99$ and $0.999$, respectively, to optimize the variance estimator.   The goal is to find the combination of $(\eta_0, \tau)$ that yields the most significant accuracy improvement after 20 epochs. The other parameters for the previously released subgradient methods remain fixed at their default values.

Figure \ref{Fig_Test_Lion} exhibits the efficacy of our proposed SRSGD method compared to Lion, Adam, and AdamW. Here, The metrics "test loss" and "training loss" measure the discrepancy between the network outputs and the actual targets on the test and training datasets, respectively. It is worth noting that the test losses are reported excluding the values of the regularization term, as the weight decay employed in Lion and AdamW does not correspond to quadratic regularization \cite{loshchilov2017decoupled,chen2023symbolic}. As a result, although SRSGD achieves the highest training losses in all the instances in Figure \ref{Fig_Test_Lion}, these training losses do not directly reflect the progress in training. Moreover, we can observe from Figure \ref{Fig_Test_Lion} that the test accuracy of SRSGD closely parallels that of Lion and surpasses those achieved by Adam and AdamW. Furthermore, the test loss curves for SRSGD are consistently lower than those of the competing methods, suggesting a potential enhancement in generalization capabilities when compared to Lion, Adam, and AdamW.  These numerical results highlight the high efficiency of the SRSGD method. Consequently, it can be concluded that our proposed framework \eqref{Eq_Framework} can yield efficient subgradient methods for training nonsmooth neural networks with guaranteed convergence properties.


%
\section*{Conflict of interest}

The authors declare that they have no conflict of interest.

\bibliographystyle{spmpsci}      
\bibliography{ref.bib}   


\end{document}